\documentclass[11pt]{amsart}  
\usepackage[margin=1in]{geometry}

\usepackage{MnSymbol}
\usepackage{colonequals}
\usepackage{amsthm}
\usepackage{mathrsfs}
\usepackage{tikz-cd}
\usepackage{comment}
\usepackage{mathtools}
\usepackage{manfnt}
\usepackage{colonequals}

\makeatletter
\newcommand{\oset}[3][0ex]{%
  \mathrel{\mathop{#3}\limits^{
    \vbox to#1{\kern-2\ex@
    \hbox{$\scriptstyle#2$}\vss}}}}
\makeatother

\usepackage{hyperref}
\hypersetup{
pdflang={en-US},
pdftitle={Stability of natural bundles on curves},
pdfauthor={Izzet Coskun, Eric Larson, and Isabel Vogt},
pdfsubject={Algebraic Geometry},
pdfkeywords={estricted tangent bundle, the normal bundle, stability, Brill-Noether curves}
}

\newcommand{\PP}{\mathbb{P}}

\newcommand{\OO}{\mathcal{O}}
\newcommand{\Hom}{\operatorname{Hom}}
\newcommand{\End}{\operatorname{End}}
\newcommand{\Ext}{\operatorname{Ext}}
\newcommand{\Gal}{\operatorname{Gal}}
\newcommand{\Pic}{\operatorname{Pic}}
\newcommand{\HH}{\mathcal{H}}
\newcommand{\Coh}{\operatorname{Coh}}

\newcommand{\Mor}{\operatorname{Mor}}
\newcommand{\Hilb}{\operatorname{Hilb}}
\newcommand{\codim}{\operatorname{codim}}

\newcommand{\defi}[1]{\textsf{#1}} 
\newcommand{\Tsch}{\operatorname{Tsch}}
\newcommand{\DTs}{\operatorname{Tsch^\vee}}
\newcommand{\et}{\text{\'et}}
\newcommand{\pr}{\text{pr}}
\newcommand{\rk}{\operatorname{rk}}

\newcommand{\leqor}{\underset{{\scriptscriptstyle (}-{\scriptscriptstyle )}}{<}}

\newcommand{\posmod}{\oset{+}{\rightarrow}}

\newcommand{\negmod}{\oset{-}{\rightarrow}}
\newcommand{\posmodalong}{\oset{+}{\leadsto}}

\newtheorem{thm}{Theorem}[section]
\newtheorem{lem}[thm]{Lemma}
\newtheorem{prop}[thm]{Proposition}
\newtheorem{cor}[thm]{Corollary}
\newtheorem{conj}[thm]{Conjecture}

\theoremstyle{definition}
\newtheorem{defin}[thm]{Definition}

\theoremstyle{remark}
\newtheorem{rem}[thm]{Remark}
\newtheorem{example}[thm]{Example}
\newtheorem{question}[thm]{Question}
\newtheorem{problem}[thm]{Problem}

\title{Stability of natural bundles on curves}

\author{Izzet Coskun}
\address{Department of Mathematics, Statistics, and CS \\
University of Illinois at Chicago, Chicago IL 60607}
\email{icoskun@uic.edu}
\author{Eric Larson}
\address{Department of Mathematics, Brown University}
\email{elarson3@gmail.com}
\author{Isabel Vogt}
\address{Department of Mathematics, Brown University}
\email{ivogt.math@gmail.com}

\thanks{During the preparation of this article, I.C.\ was supported
by  NSF grant DMS-2200684 and a Simons Foundation Travel Award,  E.L. was supported by
NSF  grant DMS-2440719, and I.V. was supported by NSF grant DMS-2338345.}
\keywords{The restricted tangent bundle, the normal bundle, stability, Brill-Noether curves}
\subjclass[2010]{Primary: 14H60, 14M15. Secondary: 14D20}

\begin{document}

\begin{abstract}
    In this paper, we survey recent developments concerning the stability of naturally defined bundles on curves that play a central role in the deformation theory of the curve. 
\end{abstract}
\maketitle
\section{Introduction}\label{sec-intro}

Given $X \subset Y$ two smooth projective varieties, there are several natural bundles that govern the deformation theory of $X$ in $Y$ and play a fundamental role in both geometry and arithmetic. In this survey, we will focus on the two most important examples of such bundles:  The restricted tangent bundle $TY|_X$ and the normal bundle $N_{X/Y}$ defined by the exact sequence
\begin{equation}\label{eq:normal}
    0 \to TX \to TY|_X \to N_{X/Y} \to 0.
\end{equation}
We will survey recent results establishing the stability of $TY|_X$ and $N_{X/Y}$, focusing primarily on the case where $X$ is a curve and $Y= \PP^r$.

\subsubsection*{The space of morphisms} Let $\Mor(X,Y)$ denote the scheme parameterizing morphisms $f\colon X \to Y$. This scheme plays a central role in enumerative geometry, Mori theory, the study of Fano varieties, and questions of hyperbolicity. The space of first-order deformations of the morphism $f$ is given by $H^0(X, f^*(TY))$, while the obstructions to lifting such deformations lie in $H^1(X, f^*(TY))$ (see \cite[Proposition 24.9]{hartshorne}). In particular, if $H^1(X, f^*(TY))=0$, then the deformations are unobstructed and $\Mor(X,Y)$ is smooth at $f$ of the expected dimension $h^0(X, f^*(TY))$. When $f$ is an embedding, $f^*(TY)$ coincides with the restricted tangent bundle $TY|_X$.

\subsubsection*{The Hilbert scheme} A similar description holds for embedded deformations. Let $\Hilb_{p}(Y)$ denote the Hilbert scheme parameterizing subschemes of $Y$ with fixed Hilbert polynomial $p$. The Zariski tangent space to $\Hilb_{p}(Y)$ at a point $[X]$ corresponding to $X \subset Y$ is naturally identified with $H^0(X, N_{X/Y})$. When $H^1(X, N_{X/Y})=0$, then $\Hilb_{p}(Y)$ is smooth at $[X]$ and has the expected dimension $h^0(X, N_{X/Y})$ (see \cite[Theorem 1.1]{hartshorne}). 

Consequently, understanding the cohomology of the restricted tangent bundle and the normal bundle lies at the heart of deformation and moduli theory, and  motivates the main question of this survey.

\begin{question}
    When are $TY|_X$ and $N_{X/Y}$ (semi)stable bundles?
\end{question}

Stable bundles enjoy good metric and cohomological properties and are the building blocks of all bundles via the Harder--Narasimhan and H\"{o}lder filtrations. 

Let $X$ be a smooth projective curve of genus $g$. The stack $\Coh(r,d)$ of coherent sheaves of rank $r$ and degree $d$ on $X$ is irreducible (see \cite{hoffman}). When $g \geq 1$, the locus of semistable bundles forms a nonempty open substack of $\Coh(r,d)$. Moreover, when  $g \geq 2$, the locus of stable bundles also forms a nonempty open substack (see \cite[\S3]{chnsurvey} for a more detailed survey). Consequently, in the absence of evident geometric obstructions, it is reasonable to expect that a general, naturally defined bundle is semistable when $g=1$ and stable when $g \geq 2$. In this survey, we illustrate both instances of this philosophy and examples where geometric obstructions arise.

In \S \ref{sec-rational}, we  begin by discussing $T\PP^r|_X$ and $N_{X/\PP^r}$ when $X$ is a rational curve. Since vector bundles on rational curves split as direct sums of line bundles, this case is considerably more concrete than the case of curves of higher genus, and these bundles can be computed explicitly by linear algebra. We will discuss a number of representative examples, such as the case of monomial curves. Since rational curves play a central role in the study of Fano varieties and questions of rational connectedness, we briefly digress to discuss $TY|_X$ and $N_{X/Y}$ when $X$ is a rational curve and $Y$ is a Grassmannian, or more generally a homogeneous variety or a Fano complete intersection in a homogeneous variety.

In \S \ref{sec-elliptic}, we will survey recent developments concerning $T \PP^r|_X$ and $N_{X/\PP^r}$ when $X$ is a curve of genus 1. Bundles on curves of genus 1 have been classified by Atiyah \cite{Atiyah}. Combining Atiyah's explicit description with the fact that genus 1 curves have the structure of a group, we give quick proofs that both $T\PP^r|_X$ and $N_{X/\PP^r}$ are semistable for general nondegenerate genus 1 curves.

For curves of higher genus, such explicit descriptions of stable bundles are not available. Instead, much of the progress in this setting has come from specialization techniques. The basic idea is to specialize a curve of genus $g$ and degree $d$ to a nodal union of curves of lower genus and degree. This approach is particularly effective for the restricted tangent bundle $T\PP^r|_X$, since when one specializes $X$ to a reducible nodal curve $X_1 \cup X_2$, the restricted tangent bundle $T\PP^r|_X$ specializes to the restricted tangent bundle $T\PP^r|_{X_i}$ on each component $X_i$. The most comprehensive result in this direction is due to Farkas and E.\ Larson \cite{farkaslarson}, who showed that $T\PP^r|_X$ is semistable if $X$ is a general Brill--Noether curve of genus at least $1$. Moreover, $T\PP^r|_X$ is in fact stable if $X$ is a general Brill--Noether curve of genus at least $2$, except when $X$ is a genus 2 curve of degree $2r$ in $\PP^r$.

The normal bundle presents substantially greater difficulties. Although the normal bundles also form a flat family when $X$ specializes to a nodal reducible curve $X_1 \cup X_2$, the limit is not simply the union of the normal bundles, but rather certain elementary modifications of the normal bundles. This complication makes the study of the normal bundle significantly more subtle. Furthermore, there are examples in low degree and low genus for which $N_{X/\PP^r}$ fails to be semistable. Hence, any inductive approach must avoid these counterexamples. In \S \ref{sec-highgenus}, we  survey the current state of knowledge on the stability of $N_{X/\PP^r}$ when $X$ is a general Brill--Noether curve of higher genus.

Finally, in \S \ref{sec-covers}, we broaden our perspective and discuss the stability of vector bundles naturally associated with finite covers of curves. Given a finite cover of curves $f\colon X \to Y$,  one can consider the associated Tschirnhausen bundle, as well as  the pushforwards of general bundles on $X$. These bundles arise naturally in the study of theta divisors on the moduli space of bundles, branched covers of curves, and Hurwitz spaces.  Their stability properties have attracted significant recent attention \cite{deopurkarpatel, landesmanlitt, clvtschirnhausen, clvbeauville}.  The final section surveys recent results on the stability of these bundles.

\subsection*{Acknowledgments} We would like to thank Atanas Atanasov, Arnaud Beauville, Lawrence Ein, Gavril Farkas, Henry Fontana, Joe Harris, Jack Huizenga, Eric Jovinelly, Hannah Larson, Aaron Landesman, Daniel Litt, Angelo Lopez, Sayanta Mandal, Lucas Mioranci, Eric Riedl, Geoff Smith, Jason Starr, Ravi Vakil, and David Yang for many invaluable discussions on the topics discussed in this survey.

\section{Preliminaries}\label{sec-prelim}
In this section, we summarize basic definitions and foundational results that will play a role in our subsequent discussion.

\subsection{Stability}
We begin by recalling the definition and basic properties of semistable vector bundles on curves. For further details and proofs, we refer the reader to \cite{lepotierbook}.

\begin{defin}\label{def-stability}
    Let $(X,H)$ be a polarized variety of dimension \(n\). Then the \defi{$H$-slope $\mu_H$} of a torsion-free coherent sheaf $V$ on $X$ is defined by 
    \[\mu_H(V) = \frac{c_1(V) \cdot H^{n-1}}{\rk(V) H^n},\] where $\rk(V)$ is the rank of $V$ and $c_1(V)$ is the first Chern class of $V$. 
    The sheaf  $V$ is \defi{$\mu_H$-(semi)stable} if all proper subsheaves of smaller rank $W \subset V$ satisfy \[ \mu_H(W) \leqor \mu_H(V).\]  
\end{defin}

\begin{rem}\label{rem:sm_curve}
    When $X$ is a smooth projective curve, every torsion-free sheaf is locally free, i.e., a vector bundle.  Moreover, if \(E\) is a vector bundle and \(F \subset E\) is a subsheaf, let \(\bar{F}\) be the kernel of the map \(E \to (E/F)/(E/F)_{\text{tors}}\).  Then \(\rk \bar{F} = \rk F\) and \(\deg \bar{F} \geq \deg F\) because \(F \subset \bar{F}\).  To determine the (semi)stability of \(E\), it therefore suffices to only consider subsheaves that are simultaneously locally free and locally cofree, i.e.,  vector subbundles.
\end{rem}

\begin{rem}
    For higher-dimensional varieties, Definition \ref{def-stability} depends on the choice of polarization since the degree is computed in terms of $H$. In contrast, for curves the degree is intrinsic and the polarization $H$ does not play a role, up to overall rescaling. Since  this survey focuses mainly on curves, we will not introduce alternative notions of stability such as Gieseker stability, Matsuki-Wentworth stability, or Bridgeland stability. We refer the reader to the surveys \cite{chgokova, chnsurvey, huybrechtslehn} for detailed treatments of stability on higher-dimensional varieties.
\end{rem}

Every torsion-free sheaf $V$ on $X$ admits a unique filtration, called the \defi{Harder--Narasimhan (HN) filtration}
\[0 \subset E_1 \subset \cdots \subset E_n=V,\] such that the successive quotients $F_i \colonequals E_i/E_{i-1}$ are semistable and 
\[\mu_H(F_1) > \cdots > \mu_H(F_n).\] Furthermore, every semistable sheaf admits a further filtration, called the \defi{H\"{o}lder} filtration, so that the successive quotients are stable. While the H\"{o}lder filtration is not unique, the associated graded object is unique. Two sheaves are called \defi{$S$-equivalent} if their associated graded objects for the H\"{o}lder filtration are isomorphic.

Assume that $X$ is a smooth, projective curve of genus $g$. In general, the stack of vector bundles of fixed rank $r$ and degree $d$ on $X$ is not bounded. For example, if \(L\) is a degree \(1\) line bundle on \(X\), consider the family of bundles of rank 2 and degree \(d\)  \[V_a \colonequals L^{\otimes (-a)} \oplus L^{\otimes (d+a)}.\]  If $d \geq 2g-2$ and $a > 0$, then by Riemann-Roch we have $h^0(X, V_a)= d+a-g+1$. In particular, by letting $a$ tend to infinity, we can make both $h^0(X, V_a)$ and $h^1(X, V_a)$ arbitrarily large. Hence, the bundles $V_a$ do not belong in a bounded family. 

In contrast, semistability imposes strong cohomological constraints. For example, if $V$ is a semistable bundle on $X$ of slope $\mu > 2g-2$, then $V$ has no higher cohomology. Indeed, by Serre duality, \[h^1(X, V)= h^0(X, V^{\vee} \otimes \omega_X),\] where $\omega_X$ denotes the canonical sheaf of $X$. The bundle $V^{\vee} \otimes \omega_X$ is semistable and has slope less than 0, therefore, it admits no nonzero global sections, since a nonzero section would generate a subsheaf of nonnegative degree destabilizing $V^{\vee} \otimes \omega_X$. Hence, $h^1(X, V)=0$. This example illustrates that semistable bundles enjoy better cohomological properties than arbitrary bundles.

More importantly, there is an irreducible, projective moduli space parameterizing $S$-equivalence classes of semistable bundles of rank $r$ and degree $d$ on on a smooth projective curve $X$; for a modern construction, see \cite{abblt22}.

\subsection{The invariants}

Let $X \subset \PP^r$ be a curve of degree $d$ and genus $g$.  Restricting the Euler sequence 
\[0 \to \OO_{\PP^r} \to \OO_{\PP^r}(1)^{\oplus (r+1)} \to T\PP^r \to 0\]
to $X$, we obtain
\[ \rk(T \PP^r|_X)= r \quad \deg(T \PP^r|_X) = (r+1)d. \]
The defining exact sequence~\eqref{eq:normal} for $N_{X/\PP^r}$ then yields
\[\rk(N_{X/\PP^r}) = r-1 \quad \deg(N_{X/\PP^r})= (r+1)d + 2g-2. \]

\subsection{Elementary modifications and normal bundles of nodal curves} Let $X$ be a projective variety and let $V$ be a vector bundle on $X$. Let $D\subset X$ be a Cartier divisor and let $W \subset V|_D$ be a subbundle of the restriction of $V$ to $D$. Then  \defi{the negative elementary modification} $V[D \negmod W]$ of $V$ along $D$ towards  $W$ is defined by the exact sequence
$$0 \to V[D \negmod W] \to V \to V|_D/W \to 0.$$ The \defi{positive elementary modification} of $V$ along $D$ towards $W$ is defined by 
\[V[D \posmod W]\colonequals V[D \negmod W](D).\] We refer the reader to \cite[\S 2-6]{aly} for a detailed discussion of the properties of modifications. The  modification $V[D \posmod W]$ is naturally isomorphic to $V$ outside of $D$, so one can easily define multiple modifications of $V$ along divisors with disjoint support, which we will denote by \[V[D_1 \posmod W_1] \cdots [D_j\posmod W_j].\] When the supports of the divisors meet, then defining multiple modifications is more subtle. When the bundles $W_i$ extend to bundles in a neighborhood of $D_i$, one can define multiple modifications in some contexts. For example, multiple modifications always exist when \(X\) is a smooth curve.  We refer the reader to \cite{aly, interpolation} for details.

\subsection{Normal bundles of nodal curves} Let $X= X_1 \cup X_2 \subset \PP^r$ be a connected nodal curve. Then at a node $p_i$ of $C$,  the tangent line $T_{p_i}X_2$ defines a normal direction in $N_{X_1/\PP^r}|_{p_i}$. We write $N_{X_1/\PP^r}[p_i\posmodalong X_2]$ for the positive elementary modification $N_{X_1/\PP^r}[p_i\posmod T_{p_i}X_2]$. We then have the following fundamental observation of Hartshorne and Hirschowitz. 

\begin{lem}\cite[Corollary 3.2]{hartshornehirschowitz}\label{lem-harthir}
Let $X=X_1\cup X_2 \in \PP^r$ be a connected nodal curve. Let $X_1 \cap X_2= \{ p_1, \dots, p_j\}$. Then 
$$N_{X/\PP^r}|_{X_1} \cong N_{X_1/\PP^r}[p_1 \posmodalong X_2] \cdots [p_j \posmodalong X_2 ]$$
\end{lem}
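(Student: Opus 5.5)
The plan is to compare the two sides as subsheaves of a common bundle and to reduce everything to a local computation at each node. Away from $\{p_1,\dots,p_j\}$, $X$ coincides with $X_1$, so $N_{X/\PP^r}|_{X_1}$ and $N_{X_1/\PP^r}$ agree there. The modifications at distinct nodes have disjoint support, so it suffices to work in a neighborhood of a single node $p = p_i$.

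First I would build a natural map. The ideal sheaves satisfy $I_X \subset I_{X_1}$, which gives $I_X/I_X^2 \to I_{X_1}/I_{X_1}^2 \cdot$, i.e.\ a map $N^\vee_{X/\PP^r}|_{X_1} \to N^\vee_{X_1/\PP^r}$. This map is an isomorphism away from the nodes. Since $X$ is a nodal curve in a smooth variety, it is a local complete intersection, so $N_{X/\PP^r}$ is locally free of rank $r-1$. Dualizing gives an injection $N_{X_1/\PP^r} \hookrightarrow N_{X/\PP^r}|_{X_1}$ with cokernel supported at the nodes.

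Next I would compute in local coordinates. Near $p$, choose analytic (or étale/formal) coordinates $x_1,\dots,x_r$ with
\[X_1 = \{x_2=\cdots=x_r=0\}, \qquad X_2=\{x_1=x_3=\cdots=x_r=0\}.\]
This is possible because the node has two transverse smooth branches. Then $I_{X_1}=(x_2,\dots,x_r)$ and $I_X=(x_1x_2,x_3,\dots,x_r)$. Restricted to $X_1$, where $x_1$ is a local parameter $t$, the conormal map sends the generator $x_1x_2 \mapsto t\cdot x_2$ and $x_k\mapsto x_k$ for $k\ge 3$. So the map on conormal bundles has cokernel of length one at $p$, in the $x_2$-direction.

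Dually, $N_{X/\PP^r}|_{X_1}$ is obtained from $N_{X_1/\PP^r}$ by allowing a simple pole in the direction $\partial/\partial x_2$. That direction spans exactly the image of $T_pX_2$ in $N_{X_1/\PP^r}|_p$. Hence $N_{X/\PP^r}|_{X_1}$ is the subsheaf of $N_{X_1/\PP^r}(p)$ consisting of sections whose fiber at $p$ lies in the line $T_pX_2$. This matches the definition of $N_{X_1/\PP^r}[p\posmod T_pX_2]$: the negative modification twisted by $\OO(p)$, which is the kernel of $N(p)\to N(p)|_p/T_pX_2$.

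Finally I would glue across all nodes. Both sheaves are subsheaves of $N_{X_1/\PP^r}(p_1+\cdots+p_j)$ that agree away from the nodes and agree locally at each $p_i$, so they are equal. The main obstacle I expect is making the local coordinate normalization rigorous, namely that the comparison of conormal sheaves may be checked in completed local rings. This works because the map is globally defined and coherent, so equality of the image lattices can be checked after completion. One also has to see that the identification is independent of the chosen coordinates, which holds since the direction singled out is intrinsically $T_pX_2$.
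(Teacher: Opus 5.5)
Your argument is correct. The paper gives no proof of its own, only the citation to Hartshorne--Hirschowitz, and your route is the standard local verification behind that result: the natural injection $N_{X_1/\PP^r} \hookrightarrow N_{X/\PP^r}|_{X_1}$ induced by $I_X \subset I_{X_1}$, followed by the formal-coordinate computation $I_X = (x_1x_2, x_3, \dots, x_r)$ at each node, which shows the cokernel is a length-one skyscraper in the direction of $T_pX_2$. One point is worth stating explicitly: $I_X = I_{X_1} \cap I_{X_2}$ because $X$ is reduced, and this intersection commutes with completion by flatness, which is what justifies computing $\hat I_X$ in your chosen coordinates.
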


Lemma~\ref{lem-harthir} allows us to study the stability of normal bundles inductively. However, carrying out this induction requires considering modifications of the normal bundle.

\subsection{Stability on nodal curves} When \(X\) is a \emph{smooth} projective curve, we observed in Remark~\ref{rem:sm_curve} that it suffices to consider only vector subbundles when proving (semi)stability. In order to use specialization, it will be convenient to provide an alternative notion of stability on nodal curves that similarly uses only vector subbundles and is also open in families. We refer the reader to  \cite[\S 2]{clv} for details. Let \(X\) be a connected nodal curve and let $\nu \colon \tilde{X} \to X$  be its normalization.  For any node \(p\) of \(X\), let \(\tilde{p}_1\) and \(\tilde{p}_2\) be the two points of \(\tilde{X}\) over \(p\). Given a vector bundle \(V\) on \(X\), the fibers of the pullback \(\nu^*V\) to \(\tilde{X}\) over \(\tilde{p}_1\) and \(\tilde{p}_2\) are naturally identified.  Given a subbundle \(F \subseteq \nu^*V\), we can thus compare \(F|_{\tilde{p}_1}\) and \(F|_{\tilde{p}_2}\) inside \(\nu^*V|_{\tilde{p}_1} \simeq \nu^*V|_{\tilde{p}_2}\).

\begin{defin}
Let \(V\) be a vector bundle on a connected nodal curve \(X\).  
For a subbundle \(F \subset \nu^*V\),  the \defi{adjusted slope \(\mu^{\text{adj}}_X\)} is given by  
\[\mu^{\text{adj}}_X(F) \colonequals \mu(F) - \frac{1}{\rk{F}} \sum_{p \in X_{\text{sing}}} \codim_{F} \left(F|_{\tilde{p}_1}\cap F|_{\tilde{p}_2} \right),\]
where \(\codim_F \left(F|_{\tilde{p}_1}\cap F|_{\tilde{p}_2} \right)\) refers to the codimension
of the intersection in either \(F|_{\tilde{p}_1}\) or \(F|_{\tilde{p}_2}\). 
We say that \(V\) is \defi{(semi)stable} if, for all subbundles \(F \subset \nu^*V\),
\[\mu^{\text{adj}}(F) \leqor \mu(\nu^*V) = \mu(V). \]
\end{defin}

\noindent
This notion of semistability specializes well in families of nodal curves.

\begin{prop} \cite[Proposition 2.3]{clv} \label{prop:stab-open}
Let \(\mathcal{X} \to \Delta\) be a family of connected nodal curves over
the spectrum of a discrete valuation ring,
and \(\mathcal{V}\) be a vector bundle on \(\mathcal{X}\).
If the special fiber \(\mathcal{V}_0 = \mathcal{V}|_0\) is (semi)stable,
then the general fiber \(\mathcal{V}^* = \mathcal{V}|_{\Delta^*}\) is also (semi)stable.
\end{prop}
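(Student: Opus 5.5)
The plan is the standard semicontinuity argument: show that destabilizing subbundles on the general fiber extend to the special fiber, where their adjusted slope can only increase. We argue by contrapositive, proving semistability in detail; stability is the same argument with strict inequalities.

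\textbf{Step 1 (reduction and extension).} Suppose the generic fiber $\mathcal{V}_\eta$ is not semistable. After a finite base change of $\Delta$ (harmless, since the hypotheses on the special fiber are unchanged up to normalizing $\mathcal{X}$ at new singularities, which we may avoid by working on the normalization $\tilde{\mathcal{X}}$ of the total space in families), a destabilizing subbundle is defined over the generic point of $\Delta$. So we may assume there is a subbundle $F_\eta \subset \nu^*\mathcal{V}_\eta$ with $\mu^{\text{adj}}(F_\eta) > \mu(\mathcal{V}_\eta)$. Working on the family of normalizations $\tilde{\mathcal X}\to\Delta$ (after base change we may assume the nodes of the general fiber are sections, and the special fiber's normalization is dominated accordingly), we use properness of the relative Quot scheme. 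The saturation of $F_\eta$ then extends uniquely to a flat subsheaf $\mathcal{F}\subset \nu^*\mathcal{V}$ whose quotient is flat over $\Delta$. Its restriction $F_0'$ to the special fiber is a subsheaf of the same rank and degree. Let $F_0$ be its saturation in $\nu_0^*\mathcal{V}_0$; then $\deg F_0 \ge \deg F_0'$.

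\textbf{Step 2 (node terms).} We compare the correction terms at the nodes. For each node of the general fiber, given by a pair of sections $\tilde p_1,\tilde p_2$, the condition $\dim(F|_{\tilde p_1}\cap F|_{\tilde p_2})\ge k$ is closed in the Grassmannian-type family. Hence the codimension at the limit node can only drop, so the node correction term does not increase in the limit. Nodes of the special fiber that are smoothed in the family are the main obstacle. Locally $\tilde{\mathcal X}$ near such a node looks like $xy=t$, and the total space is smooth there, so the normalization does not separate the branches generically. Here we must check that the extra penalty $\codim(F_0|_{\tilde p_1}\cap F_0|_{\tilde p_2})$ is compensated. I would argue that $\mathcal F$, defined on the smooth surface near the node, is reflexive and hence locally free. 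Its fibers over the two branches then agree at the node, so the intersection has codimension $0$, giving no penalty before saturation. Saturating on each branch raises the degree by at least the dimension by which the fibers move, and this bounds the codimension of the intersection. This local computation is the step I expect to require the most care.

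\textbf{Step 3 (conclusion).} Combining the two steps gives $\mu^{\text{adj}}(F_0)\ge \mu^{\text{adj}}(F_\eta)>\mu(\mathcal V_\eta)=\mu(\mathcal V_0)$, where the last equality holds because degree and rank are constant in flat families. This contradicts semistability of $\mathcal V_0$. The stable case is identical with $\ge$ in place of $>$.
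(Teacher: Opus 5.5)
There is a genuine gap at the smoothed nodes in Step 2. You assert that near a node of $\mathcal{X}_0$ that is smoothed in the family, the total space is $xy=t$. Hence it is regular, and the saturated extension $\mathcal{F}$ is reflexive and therefore locally free. This is false in general. Formally locally the family is $xy=ut^n$ with $u$ a unit and $n\geq 1$ arbitrary. Moreover, the base change you make in Step 1 multiplies $n$ by the ramification index, so even an initially regular total space acquires $A_{m-1}$ singularities after a base change of degree $m$. Normalizing does not help, since these points are already normal.

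At such a point a saturated subsheaf need not be locally free. On $xy=t^2$, the subsheaf of $\OO^{2}$ generated by $(x,t)$ and $(t,y)$ is saturated of rank one and isomorphic to the non-principal ideal $(x,t)$ of a line through the vertex. Its restriction to the special fiber is generated by $(x,0)$ and $(0,y)$, so it is not locally free at the node. On the two branches it saturates to $\langle e_1\rangle$ and $\langle e_2\rangle$. This creates a penalty of $1$ that your ``fibers agree'' argument does not see, and $\deg F_0'$ only makes sense via $\chi$. So your Step 2 is proved only when the total space is regular. To reduce to that case you would have to resolve the singularities, which inserts chains of rational curves carrying the trivial bundle into the special fiber. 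You would then have to prove separately that this operation preserves (semi)stability of $\mathcal{V}_0$.

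Your treatment of the nodes of the general fiber is also wrong as stated. Semicontinuity in the Grassmannian controls the limits $\Lambda_i$ of $F|_{\tilde p_i}$ along the sections. But where $\nu^*\mathcal{V}/\mathcal{F}$ is not locally free, the fiber of the saturated limit $F_0$ is a different subspace. For $\mathcal{F}=\OO\cdot(x,t)\subset\OO^{2}$ near the section $\{x=0\}$ (with $t$ the uniformizer), $\Lambda_1=\langle e_2\rangle$ but $F_0|_{\tilde p_{1,0}}=\langle e_1\rangle$. If $\mathcal{F}=\OO e_2$ near the other branch, the penalty jumps from $0$ to $1$.

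What is true is the following. Let $s=\rk F$, let $c$ be the generic penalty, and suppose the fiber of $\mathcal{F}_0$ at $\tilde p_{i,0}$ has image $W_i$ of dimension $s-\delta_i$. Then $\dim(W_1+W_2)\leq s+c$, so the special penalty is at most $c+\delta_1+\delta_2$. Saturation gains at least $\delta_1+\delta_2$ in degree at these two points, so the same compensation bookkeeping you planned for smoothed nodes is needed here too.

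Both issues disappear if you avoid local analysis entirely. For $F\subset\nu^*V$, let $G=V\cap\nu_*F$ be the sheaf of sections of $F$ that glue at the nodes. Then
$\chi(G)=\chi(F)-\sum_p\bigl(\rk F+\codim_F(F|_{\tilde p_1}\cap F|_{\tilde p_2})\bigr)$, that is, $\mu^{\text{adj}}(F)=\chi(G)/\rk F-\chi(\OO_X)$. Conversely, take any subsheaf $G\subset V$ of constant rank on all components, and let $F$ be the saturation of the image of $\nu^*G$. Then $G\subset V\cap\nu_*F$, hence $\mu^{\text{adj}}(F)\geq\chi(G)/\rk G-\chi(\OO_X)$.

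So adjusted (semi)stability is exactly $\chi$-slope (semi)stability for subsheaves of $V$. Openness then follows from properness of the Quot scheme, as on a smooth curve. The flat limit of $\mathcal{V}^*\cap\nu_*F^*$ is a subsheaf of $\mathcal{V}_0$ with the same $\chi$. It also has the same rank on each component, because the local rings of $\mathcal{X}$ at the generic points of the components of $\mathcal{X}_0$ are DVRs. This argument needs no hypothesis on the singularities of the total space.
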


\begin{lem} \cite[Lemma 4.1]{clv}\label{lem:naive}
Suppose that \(X = X_1 \cup X_2\) is a reducible nodal curve and \(V\) is a vector bundle on \(X\) such that \(V|_{X_1}\) and \(V|_{X_2}\) are semistable.
Then \(V\) is semistable.
Furthermore, if one of \(V|_{X_1}\) or \(V|_{X_2}\) is stable, then \(V\) is stable.
\end{lem}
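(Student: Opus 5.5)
Fix a subbundle $F \subset \nu^*V$ with $0 < \rk F < \rk V$. The plan is to bound $\mu^{\text{adj}}(F)$ by restricting $F$ to the two components and paying for the disagreement at the nodes. Write $\nu^*V = V_1 \sqcup V_2$, where $V_i = V|_{X_i}$; I suppress any self-nodes of the $X_i$ for now and return to them at the end. Both $V_i$ have rank $n = \rk V$. Since $\tilde X$ is disconnected, $F$ may have different ranks $a_1, a_2$ on the two pieces, so let $F_i = F|_{\tilde X_i} \subset V_i$. The degree is additive, $\deg F = \deg F_1 + \deg F_2$.

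First I need to fix conventions for the disconnected normalization. The definitions of $\mu$ and $\rk$ only make sense there with a convention, and I will use the one under which additivity holds:
\begin{itemize}
\item the rank is $\rk F = (a_1 + a_2)/2$, i.e.\ the average over components, so that $\rk \nu^*V = n$;
\item the slope is $\mu(F) = \deg F / \rk F$;
\item the slope of $V$ is $\mu(V) = (\deg V_1 + \deg V_2)/n$, whence $\mu(V_1) + \mu(V_2) = 2\mu(V)$.
\end{itemize}
To streamline, I compare via $\deg F - \mu(V) \cdot \rk F$ minus the node penalties. The claim to prove is
\[
\deg F_1 + \deg F_2 - \sum_p \codim_F \le \tfrac{1}{2}(a_1 + a_2)\,\mu(V),
\]
with strict inequality in the stable case.

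Next, the nodal penalty. At each node $p \in X_1 \cap X_2$, the fibers $F_1|_p$ and $F_2|_p$ sit inside the same $n$-dimensional space, and the codimension term is at least $|a_1 - a_2|$. The precise meaning of "$\codim$ in $F$" when the ranks differ needs care; I take it to be at least $\max(a_1, a_2) - \dim(F_1|_p \cap F_2|_p) \ge |a_1 - a_2|$. Semistability of the $V_i$ gives $\deg F_i \le a_i \mu(V_i)$. So it suffices to show
\[
a_1 \mu(V_1) + a_2 \mu(V_2) - j\,|a_1 - a_2| \le \tfrac{1}{2}(a_1 + a_2)(\mu(V_1) + \mu(V_2)),
\]
where $j = \#(X_1 \cap X_2) \ge 1$. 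This rearranges to $\tfrac{1}{2}(a_1 - a_2)(\mu(V_1) - \mu(V_2)) \le j\,|a_1 - a_2|$.

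This inequality is the main obstacle. It needs $|\mu(V_1) - \mu(V_2)| \le 2j$, which is not automatic. Otherwise the hypothesis must be read with normalized slopes, or the definition of $\mu^{\text{adj}}$ on a disconnected normalization must be interpreted so that $F$ has equal rank on each component, presumably because $F$ comes from a family. If I restrict to $F$ with $a_1 = a_2 = a$, as forced for subsheaves that are limits of subbundles on the general fiber, the inequality is immediate: $\deg F \le a(\mu(V_1) + \mu(V_2)) = 2a\mu(V)$, and the penalties are nonnegative. I would first check the cited definition to confirm which reading holds, and otherwise argue that the $a_1 \ne a_2$ case is dominated via the penalty $j\,|a_1 - a_2|$.

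For stability, with the equal-rank reading, suppose equality holds. Then $\deg F_i = a\mu(V_i)$ for both $i$, and this contradicts stability of whichever $V_i$ is stable, because $0 < a < n$. Self-nodes of an $X_i$ only add nonnegative penalties, so they do not affect the argument.
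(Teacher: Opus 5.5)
Your equal-rank computation is the whole proof, and it is the same additive argument used for the cited lemma. What is missing is the observation that removes your fork. In the definition of $\mu^{\text{adj}}_X$, $\rk F$ is a single number, and the penalty is the codimension ``in either $F|_{\tilde p_1}$ or $F|_{\tilde p_2}$.'' That phrasing presupposes the two fibers have equal dimension, so $F$ has the same rank on both branches at every node. Since $X$ is connected, $F$ therefore has constant rank $a$ on $\tilde X$, and the case $a_1\neq a_2$ never occurs.

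Your fallback of dominating that case by the penalty should be dropped, because it cannot work. With your penalty, $F=\nu^*V|_{\tilde X_1}\sqcup 0$ satisfies $\mu^{\text{adj}}(F)\le\mu(V)$ exactly when $\mu(V_1)-\mu(V_2)\le 2j$. This is false for semistable $V_i$ whose slopes differ enough, so the lemma itself would be false under any reading that admits such $F$.

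Two bookkeeping corrections are also needed.

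First, with $\rk\nu^*V=n$ one has $\mu(V)=\deg V/n=\mu(V_1)+\mu(V_2)$, not $\tfrac12\bigl(\mu(V_1)+\mu(V_2)\bigr)$. With the relation you wrote, your last line $\deg F\le 2a\mu(V)$ does not give the required $\deg F\le a\mu(V)$; with the correct relation it does.

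Second, if $X_1$ or $X_2$ is itself nodal, semistability of $V|_{X_i}$ is meant in the adjusted sense, and it does not give $\deg F_i\le a\mu(V_i)$. The self-node penalties are not extra slack; they are exactly what lets you apply the hypothesis on $X_i$, via
\[
a\,\mu^{\text{adj}}_X(F)=a\,\mu^{\text{adj}}_{X_1}(F_1)+a\,\mu^{\text{adj}}_{X_2}(F_2)-\sum_{p\in X_1\cap X_2}\codim_F\bigl(F|_{\tilde p_1}\cap F|_{\tilde p_2}\bigr)\le a\,\mu(V_1)+a\,\mu(V_2)=a\,\mu(V).
\]
Strictness when one $V|_{X_i}$ is stable then follows as you say, since $0<a<n$.
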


\section{Rational curves}\label{sec-rational}
In this section, we will survey results about the restricted tangent bundle and the normal bundle of rational curves in projective space.

\subsection{Bundles on rational curves} A vector bundle $V$ on $\PP^1$ decomposes as a direct sum of line bundles, that is, there is a unique sequence of integers \[a_1 \leq a_2 \leq \cdots \leq a_r\] such that \[V \cong \bigoplus_{i=1}^r \OO_{\PP^1}(a_i).\] Hence, there are no stable bundles of higher rank on $\PP^1$, and the only semistable bundles are of the form $\OO_{\PP^1}(a)^{\oplus r}$ and exist only when the rank divides the degree $d=ra$. The following notion is the closest possible alternative to (semi)stability for vector bundles on $\PP^1$.

\begin{defin}
  The vector bundle $V$ on $\PP^1$ is called \defi{balanced} when $a_r - a_1 \leq 1$ and  \defi{perfectly balanced} when $a_1 = a_r$. The bundle is perfectly balanced if and only if it is semistable. More generally, the bundle is called \defi{$j$-balanced} if $a_r - a_1 \leq j$.  
\end{defin}

The bundle $V$ is $j$-balanced if and only if every summand of the endomorphism bundle $\End(V) \cong V \otimes V^{\vee}$ has degree at least $-j$. Hence, $j$-balanced bundles can be characterized by the vanishing of $H^1(\End(V)(j-1))$. Consequently, the property of being $j$-balanced is an open condition in flat families. 

In a flat family of bundles on $\PP^1$, the expected codimension of a given splitting type $V \cong \bigoplus \OO_{\PP^1}(a_i)$ is given by $$h^1(\End(V)) = \sum_{\{ i,j| a_i - a_j \leq -2\}} (a_j - a_i -1).$$ This codimension agrees with the codimension of scrolls of type $S_{a_1, \dots, a_r}$ in the Hilbert scheme of scrolls in projective space (see, for example, \cite[Lemma 2.4]{coskungw}) and is the codimension of the given splitting type in the versal deformation space of bundles on $\PP^1$ with that given splitting type.

\subsection{The restricted tangent bundle of rational curves in projective space} A rational curve of degree $e$ can be viewed as the image of a morphism $f\colon \PP^1 \to \PP^r$, where $f$ is given by $r+1$ homogeneous polynomials of degree $e$ in two variables. Pulling back the Euler sequence to $\PP^1$ via $f$, we obtain the exact sequence 
\[0 \to \OO_{\PP^1} \stackrel{(f_0, \dots, f_r)^T}{\longrightarrow} \OO_{\PP^1}(e)^{\oplus(r+1)} \to T\PP^r|_X \to 0.\] Consequently, the splitting type of $T\PP^r|_X$ is determined by the degrees of the generators of the module of relations among the defining polynomials of $f$.

\begin{example}\label{ex-p1restanbun}
    In the case of the rational normal curve
    \[ (s^r: s^{r-1}t:\cdots : st^{r-1} : t^r)\]
    the relations among the columns are given by \[ t \cdot s^i t^{r-i} = s \cdot s^{i-1} t^{r-i+1}.\] Hence, the splitting type of $T\PP^r|_X$ when $X$ is a rational normal curve is $\OO_{\PP^1}(r+1)^{\oplus r}$.
\end{example}

\noindent
An important consequence of Example \ref{ex-p1restanbun} is the following corollary.

\begin{cor}
    The tangent bundle $T\PP^r$ is a stable bundle.
\end{cor}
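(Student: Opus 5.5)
Since $\PP^r$ has Picard group $\mathbb{Z}$, I will take slope with respect to the hyperplane class $H$. The aim is to deduce stability of $T\PP^r$ from the fact (Example~\ref{ex-p1restanbun}) that its restriction to a rational normal curve is perfectly balanced, by a restriction argument. Fix a rational normal curve $\iota\colon \PP^1 \to X \subset \PP^r$ of degree $r$. Then $T\PP^r|_X \cong \OO_{\PP^1}(r+1)^{\oplus r}$, which has slope $r+1$. Since $c_1(T\PP^r) = (r+1)H$, we have $\mu_H(T\PP^r) = (r+1)/r$. Restricting to $X$ multiplies degrees by $\deg X = r$, so this restricted slope matches.

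Let $W \subset T\PP^r$ be a saturated subsheaf of rank $s$ with $0<s<r$, and write $c_1(W) = aH$, so that $\mu_H(W) = a/s$. Saturated subsheaves of a vector bundle are reflexive, hence locally free away from a closed set $Z$ of codimension at least $3$. The quotient $Q = T\PP^r/W$ is torsion-free, hence locally free away from a set $Z'$ of codimension at least $2$. Rational normal curves form a family whose members pass through general points and move freely. Their translates under $PGL_{r+1}$ include curves avoiding any fixed codimension-$2$ locus, because the family of rational normal curves through a general point sweeps out $\PP^r$, and a dimension count shows that a general translate misses $Z\cup Z'$. Choose $X$ disjoint from $Z \cup Z'$. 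Then $W|_X \subset T\PP^r|_X$ is a subbundle of rank $s$ and degree $ar$, because $\deg c_1(W)|_X = a \deg X$.

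Any subbundle of $\OO_{\PP^1}(r+1)^{\oplus r}$ of rank $s$ has degree at most $s(r+1)$, since that bundle is semistable. Hence $ar \le s(r+1)$, that is, $a/s \le (r+1)/r$. This already gives semistability. For stability I need strict inequality, and I expect this to be the main obstacle, because the restricted bundle is only semistable, not stable. I resolve it by integrality. Equality $a/s = (r+1)/r$ would force $ar = s(r+1)$. Since $\gcd(r,r+1)=1$, this gives $r \mid s$, which is impossible for $0<s<r$. Therefore $\mu_H(W) < \mu_H(T\PP^r)$ for every proper saturated subsheaf. By the saturation argument, which is the higher-dimensional analogue of Remark~\ref{rem:sm_curve}, this covers all subsheaves of smaller rank, so $T\PP^r$ is stable.

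The remaining care is the genericity step, namely that a rational normal curve can be moved off a codimension-$2$ set. The cleanest approach is to use the $PGL_{r+1}$-action. The incidence $\{(g,z): z \in gX\}$ with $z\in Z\cup Z'$ has dimension $\dim PGL_{r+1} + 1 + \dim(Z\cup Z') - r < \dim PGL_{r+1}$, so it cannot dominate $PGL_{r+1}$.
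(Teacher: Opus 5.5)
Your proof is correct and follows the same route as the paper: restrict a subsheaf (reflexive, hence locally free off codimension $3$) to a general rational normal curve avoiding its singular locus, compare with the semistable bundle $\OO_{\PP^1}(r+1)^{\oplus r}$, and use $\gcd(r,r+1)=1$ to upgrade semistability to stability. The differences are cosmetic: you work with saturated subsheaves (so $W|_X$ is an honest subbundle) and make the genericity step explicit via the $PGL_{r+1}$ dimension count, whereas the paper takes a double dual and invokes coprimality at the outset.
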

   \begin{proof}
       Let $H$ be the class of a hyperplane on $\PP^r$. Since the rank $r$ and the degree $r+1$ of $T \PP^r$ are relatively prime, it suffices to prove that $T \PP^r$ is semistable. Let $W$ be a  subsheaf of $T\PP^r$. If necessary, by taking the double dual, we may assume that the singularities of $W$ have codimension at least 3. Consequently, a general rational normal curve $X$ does not intersect the singularities of $W$ and the restriction of $W$ to $X$ is a subsheaf of $T\PP^r|_X$. Since the latter bundle is semistable, \[r \mu_H(W) = \mu(W|_X) \leq \mu(T\PP^r|_X) = r\mu_H(T\PP^r). \qedhere\]
   \end{proof} 
    
This corollary demonstrates that one can sometimes prove the stability of important bundles by proving the stability of their restriction to a curve. 

\begin{example}\label{ex-monomialcurve}
    Let $k_0=0< k_1= 1 < k_2 < \cdots < k_{r-1}=e-1 < k_r = e$ be an increasing sequence of nonnegative integers. Consider the monomial rational curve $X$ given by
    \[(s^e: s^{e-1} t^: \cdots  : s^{e-k_i}t^{k_i}: \cdots : st^{e-1}: t^e).\]
    For $1 \leq i \leq r$, let $b_i = k_i - k_{i-1}$. Then the relations among the columns are generated by \[ t^{b_i} \cdot s^{e-k_{i-1}} t^{k_{i-1}} = s^{b_i} \cdot s^{e-k_i} t^{k_i}. \] We conclude that 
    \[T\PP^r|_X \cong \bigoplus_{i=1}^r \OO_{\PP^1}(e+b_i).\]
\end{example}

Recall that a rational curve $f\colon \PP^1 \to \PP^r$ is \defi{unramified} or \defi{immersed} if the natural map \[f^* \Omega \PP^r \to \Omega \PP^1\] is surjective. In the classical literature, some authors refer to the image of such a map as a rational curve with \defi{ordinary singularities}.

Given a flat family of vector bundles $\mathcal{V} \to S$ parameterized by a base $S$, the Kodaira--Spencer map at $s \in S$ is the map from $T_s S \to \Ext^1(\mathcal{V}_s, \mathcal{V}_s)$ that associates to a Zariski tangent vector to $S$ at $s$ the corresponding infinitesimal deformation of the bundle $\mathcal{V}_s$. For rational curves with ordinary singularities, we claim that the Kodaira--Spencer map for the restricted tangent bundle is surjective. We can parameterize such rational curves as an open subset in  $\Hom(\OO_{\PP^1}, \OO_{\PP^1}(e)^{\oplus (r+1)})$ corresponding to the fiberwise injective morphisms. The Kodaira--Spencer map 
\[\kappa \colon \Hom\left(\OO_{\PP^1}, \OO_{\PP^1}(e)^{\oplus (r+1)}\right) \to \Ext^1\left(f^* T\PP^r, f^* T\PP^r\right) \]
factors through the natural morphisms 
\[\Hom\left(\OO_{\PP^1}, \OO_{\PP^1}(e)^{\oplus (r+1)}\right) \stackrel{\phi}{\longrightarrow} \Ext^1\left(f^* T\PP^r, \OO_{\PP^1}(e)^{\oplus (r+1)}\right) \stackrel{\psi}{\longrightarrow}\Ext^1\left(f^* T\PP^r, f^*T\PP^r\right), \]
where $\phi$ and $\psi$ are the maps in the long exact sequences obtained by applying $\Hom(-, \OO_{\PP^1}(e)^{\oplus (r+1)})$ and $\Hom(f^* T\PP^r, -)$ to the pullback of the Euler sequence, respectively.  Since \[\Ext^1 (\OO_{\PP^1}(e)^{\oplus (r+1)}, \OO_{\PP^1}(e)^{\oplus (r+1)}) =0 \quad \mbox{and} \quad\Ext^2(f^* T\PP^r, \OO_{\PP^1})=0,\] we conclude that both $\phi$ and $\psi$ are surjective. Therefore, the Kodaira--Spencer map is surjective. In particular, the restricted tangent bundle on a general rational curve is balanced and the locus of unramified maps $f$ such that $f^* T\PP^r$ has a specified splitting type has the expected codimension. More precisely, Ramella proves the following.

\begin{thm}\cite{ramella}
The locally closed locus in $\Mor_e(\PP^1, \PP^r)$ parameterizing unramified morphisms, where $f^* T \PP^r$ has a specified splitting type, is irreducible of the expected dimension \[(e+1)(r+1) - 1 - h^1(\End(f^* T \PP^r)).\]
\end{thm}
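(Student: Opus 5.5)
The plan is to exploit the Kodaira--Spencer surjectivity just established and to realize each splitting-type locus as a preimage of a versal stratum. Let $U \subset \Hom(\OO_{\PP^1}, \OO_{\PP^1}(e)^{\oplus(r+1)})$ be the open set of fiberwise injective maps whose induced morphism is unramified. Then $\Mor_e(\PP^1,\PP^r)$ restricted to unramified maps is $U/\mathbb{G}_m$, which has dimension $(e+1)(r+1)-1$. On $U$ we have a flat family of rank $r$, degree $(r+1)e$ bundles $\mathcal{V}$, namely the cokernel of the tautological map. Fix a splitting type $V=\bigoplus \OO_{\PP^1}(a_i)$ and let $U_V \subset U$ be the locus where $\mathcal{V}_f\cong V$. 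The goal is to show that $U_V$ is smooth of codimension $h^1(\End V)$ wherever it is nonempty, and then that it is irreducible.

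\textbf{Step 1 (dimension).} Near any $f \in U_V$, the family $\mathcal{V}$ is pulled back from a miniversal deformation space $B$ of $V$. The space $B$ is smooth of dimension $\dim \Ext^1(V,V) = h^1(\End V)$, because $\Ext^2=0$ on $\PP^1$. In $B$ the isomorphism class of $V$ is the single reduced point $0$: versality gives the stratum codimension $h^1(\End V)$, as recalled before Example~\ref{ex-p1restanbun}. Since the Kodaira--Spencer map factors through the differential of the classifying map $U \to B$, surjectivity of Kodaira--Spencer makes this map smooth at $f$. So locally $U_V$ is the preimage of $0$ under a smooth morphism, hence smooth of codimension $h^1(\End V)$ in $U$. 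Dividing by scalars gives the stated dimension.

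\textbf{Step 2 (irreducibility).} Here I would use a direct parametrization instead of local arguments. A point of $U_V$ is the same as a surjection $\OO_{\PP^1}(e)^{\oplus(r+1)} \to V$ whose kernel is $\OO_{\PP^1}$, subject to the open unramified condition. Dually, it is the same as a fiberwise injective map $V^\vee \to \OO_{\PP^1}(-e)^{\oplus(r+1)}$. So consider the space $S$ of pairs $(\alpha,\beta)$, where $\alpha \colon \OO(e)^{\oplus(r+1)}\to V$ is surjective with kernel of determinant degree $0$; then the kernel is automatically $\OO_{\PP^1}$. The space $S$ is an open subset of the vector space $\Hom(\OO(e)^{\oplus(r+1)}, V)$, hence irreducible. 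Its image in $U$ is $U_V$, via sending $\alpha$ to the inclusion of its kernel, which is well defined up to scalars. The map $S \to U_V$ is a torsor under $\operatorname{Aut}(V)$, which is connected because it is an open subset of the vector space $\End(V)$. Therefore $U_V$ is irreducible, provided it is nonempty. Nonemptiness is not asserted for every splitting type, or follows from Example~\ref{ex-monomialcurve}-type constructions when the monomial curve is unramified.

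\textbf{Main obstacle.} The delicate point is Step 1's passage from surjectivity of Kodaira--Spencer to smoothness of the classifying map. One must check that the tangent map of $U\to B$ really is $\kappa$ and that $B$ is smooth. Both hold because $\Ext^2$ vanishes on a curve. One must also make sure the stratum of $V$ in $B$ is the reduced point, which follows from the miniversality of $B$.
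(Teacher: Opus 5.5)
Your argument is correct, but only half of it follows the paper. Step 1 is exactly the paper's reasoning: the paper shows $\kappa=\psi\circ\phi$ is surjective and reads off from this that the splitting-type loci have the expected codimension. Your remarks that the classifying map exists only formally, and that the trivial locus in the miniversal base is the reduced closed point, are details the paper leaves implicit.

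For irreducibility the paper gives no argument and simply cites Ramella, so your Step 2 is a genuinely different ingredient, and it works. Every surjection $\alpha\colon\OO_{\PP^1}(e)^{\oplus(r+1)}\to V$ has kernel a degree-$0$ line subbundle, i.e.\ $\OO_{\PP^1}$. This kernel is spanned by the signed maximal minors of $\alpha$, which are forms of degree $\sum_i(a_i-e)=e$ with no common zero. Hence $\alpha\mapsto[\text{minors}]$ is a morphism from an open subset of $\Hom(\OO_{\PP^1}(e)^{\oplus(r+1)},V)$ onto the stratum in $\Mor_e(\PP^1,\PP^r)$, and its fibers are free $\operatorname{Aut}(V)$-orbits. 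Irreducibility then needs only that the image of an irreducible variety is irreducible; connectedness of $\operatorname{Aut}(V)$ is irrelevant.

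The same map also gives the dimension, via the fiber dimension theorem, using $a_i\geq e$ and $\chi(\End V)=r^2$:
\[(r+1)\,h^0(V(-e))-h^0(\End V)=(r+1)(e+r)-\bigl(r^2+h^1(\End V)\bigr)=(e+1)(r+1)-1-h^1(\End V).\]
So Step 2 alone proves the statement with no deformation theory, and it sidesteps the delicate points you flagged in Step 1. What the Kodaira--Spencer route buys in addition is smoothness of each stratum and a local description of the whole stratification as a smooth pullback of the versal one.

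Two small slips should be fixed. The $\beta$ in your pairs $(\alpha,\beta)$ is never defined and should be dropped. Also, the kernel map lands in $U/\mathbb{G}_m=\Mor_e$ rather than in $U$, unless you use the minors to land in $U$ directly.
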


\subsection{Normal bundles of rational curves in projective space} When the characteristic does not divide the degree $e$ of the rational curve $X$, we can use the Euler sequence to compute the normal bundle of a rational curve. We have the following commutative diagram

 \begin{equation}
     \begin{tikzcd}
     & 0 \arrow[d] & 0 \arrow[d] & & \\
          & \OO_{\PP^1} \arrow[r, "e"] \arrow[d,  "(s \ t)^T"]  & \OO_{\PP^1} \arrow[d] & & \\
         0\arrow[r] & \OO_{\PP^1}(1)^{\oplus 2} \arrow[r, "J"]\arrow[d] & \OO_{\PP^1}(e)^{\oplus (r+1)} \arrow[r]\arrow[d] & N_{X/\PP^r} \arrow[r]\arrow[d] & 0 \\
        0 \arrow[r]& T\PP^1 \arrow[r]\arrow[d] & T\PP^r|X \arrow[r]\arrow[d] & N_{X/\PP^r} \arrow[r] & 0 \\
        & 0 & 0 & &
     \end{tikzcd}
 \end{equation}
where the first column is the Euler sequence for $\PP^1$, the second column is the pullback of the Euler sequence on $\PP^r$, the top horizontal map is multiplication by $e$ and $J$ is the Jacobian matrix
\[J=\left(\begin{array}{cccc}
\frac{\partial f_0}{\partial s} & \frac{\partial f_1}{\partial s} & \cdots & 
\frac{\partial f_r}{\partial s} \\
\frac{\partial f_0}{\partial t} & \frac{\partial f_1}{\partial t} & \cdots & 
\frac{\partial f_r}{\partial t}
\end{array}
\right)^T,\] where $T$ denotes the transpose.
The commutativity of the top square follows from the Euler relation for a homogeneous polynomial $g$ in $n+1$ variables \[ \sum_{i=0}^n x_i \frac{\partial g}{\partial x_i} = \deg(g) g\] Consequently, the normal bundle of $X$ is the cokernel of the map given by the Jacobian matrix $J$
\[0 \to \OO_{\PP^1}(1)^{\oplus 2} \stackrel{J}{\longrightarrow} \OO_{\PP^1}(e)^{\oplus (r+1)} \to N_{X/\PP^r} \to 0.\] Hence, the splitting type of $N_{X/\PP^r}$ can be determined by computing the degrees of the generators of the module of relations among the columns of $J^T$.

\begin{rem}
 Observe that this argument fails when the degree $e$ is divisible by the characteristic since multiplication by $e$ is no longer an isomorphism.   
\end{rem}

\begin{example}
   For a rational normal curve $(s^r: s^{r-1}t : \dots : t^r)$, the Jacobian matrix is given by
   \[\left (\begin{array}{ccccccc}
      r s^{r-1}   & (r-1)s^{r-2}t  & \cdots & i s^{i-1}t^{r-i} & \cdots & t^{r-1} & 0 \\
      0   &  s^{r-1} & \cdots & (r-i)s^i t^{r-i-1} & \cdots &(r-1)st^{r-2} &  rt^{r-1}
   \end{array}  \right)^T.\] Let $C_i$ denote the $i$th column of this matrix. Observe that the relations among the columns are generated by the relations
   \[ t^2 C_i - 2st C_{i+1} + s^2 C_{i+2}=0.\]
   It follows that \[N_{X/\PP^r} \cong \OO_{\PP^1}(r+2)^{\oplus (r-1)}.\] Although this calculation assumes that the characteristic does not divide $r$, the answer remains the same even when the characteristic divides $r$ (see, for example, \cite[Corollary 2.5]{coskunriedlci}).
\end{example}

\begin{example}\label{ex:monomial_normal}
    More generally, assume that the characteristic of the base field is greater than $e$. Let $X$ be the monomial rational curve $(s^e: s^{e-1}t: \cdots : s^{k_i} t^{e-k_i}: \cdots : st^{e-1}:t^e)$ introduced in Example \ref{ex-monomialcurve}. Then the Jacobian matrix $J$ has the form 
   \[\left (\begin{array}{ccccccc}
      e s^{e-1}   & (e-1)s^{e-2}t  & \cdots & k_i s^{k_i-1}t^{e-k_i} & \cdots & t^{r-1} & 0 \\
      0   &  s^{e-1} & \cdots & (e-k_i)s^{k_i} t^{e-k_i-1} & \cdots &(e-1)st^{e-2} &  et^{e-1}
   \end{array}  \right)^T.\]  
Three consecutive columns $C_{i-1}, C_i, C_{i+1}$ of the matrix satisfy the relation
\[(k_i - k_{i+1}) s^{k_{i+1}-k_{i-1}}C_{i-1} + (k_{i+1}-k_{i-1}) s^{k_{i+1}-k_i} t^{k_i - k_{i-1}} C_i + (k_{i-1}-k_i) t^{k_{i+1}-k_{i-1}} C_{i+1} = 0.\] These relations generate the module of relations. For $1 \leq i \leq r-1$, set \[c_i:= k_{i+1} - k_{i-1}.\]  We conclude that 
\[ N_{X/\PP^r} \cong \bigoplus_{i=1}^{r-1} \OO_{\PP^1}(e+c_i).\]
\end{example}

\begin{rem}
    The splitting type of the normal bundle of a curve may depend on the characteristic. In small characteristics, there may be lower degree relations among the columns due to some coefficients becoming 0. For example, the normal bundle of the curve \[(s^5: s^4t: s^2t^3: st^4: t^5)\] has balanced splitting type $\OO_{\PP^1}(7) \oplus \OO_{\PP^1}(8)^{\oplus 2}$ in characteristics not equal to 2 or 5. In characteristic~2, the splitting type becomes $\OO_{\PP^1}(7) ^{\oplus 2}\oplus \OO_{\PP^1}(9)$ (see \cite[Proposition 3.7]{coskunriedlci}).
\end{rem}

When the characteristic of the base field is 0, Sacchiero \cite{sacchiero80} determined the possible splitting types of normal bundles of rational curves with ordinary singularities in $\PP^r$.

\begin{rem}
    Suppose that a rational curve $X$ of degree $e$ is degenerate, i.e., that it spans a $\PP^s \subset \PP^r$ for \(s < r\). Then \[N_{X/\PP^r} \cong N_{X/\PP^s} \oplus \OO_{\PP^1}(e)^{\oplus (r-s)}.\] To see this, observe that we have the normal bundle exact sequence
    \[ 0 \to N_{X/\PP^s} \to N_{X/\PP^r} \to N_{\PP^s/\PP^r}|_X \to 0.\] Since $N_{\PP^s/\PP^r}\cong \OO_{\PP^s}(1)^{\oplus (r-s)}$, we conclude that 
    \[N_{\PP^s/\PP^r}|_X \cong \OO_{\PP^1}(e)^{\oplus (r-s)}.\] Since each summand in $N_{X/\PP^s}$ has degree at least $e$, we conclude that the sequence splits. Consequently, in order to understand normal bundles of rational curves, one can restrict to nondegenerate rational curves, which we will do for the rest of the discussion.   
\end{rem}

It is not always possible to find a monomial rational curve as in Example~\ref{ex:monomial_normal} with balanced normal bundle. For example, such a rational curve in \(\PP^4\) has the form \((s^e : s^{e-1}t : s^kt^{e - k}: st^{e-1}: t^e)\),
so has normal bundle \(\OO_{\PP^1}(e - k) \oplus \OO_{\PP^1}(e - 2) \oplus \OO_{\PP^1}(k)\), which is never balanced for \(e \geq 7\). We must therefore consider a more general class of rational curves.

Let \(2 \leq b_1 \leq \cdots  \leq b_{r - 1}\) be integers such that \(\sum_{i = 1}^{r - 1} b_i = 2e - 2\). Define $\delta_1 = 1$, and $\delta_i = b_{i-1} - \delta_{i-1}$ for $1 < i \leq r-1$. Let $c= 1 + \sum_{i=1}^{r-1} \delta_i$. Let $p(s,t)$ and $q(s,t)$ be two general homogeneous polynomials of degree $e-c$. Let $k_i = c-\sum_{j=1}^i \delta_j$ for $0 \leq i \leq r-1$. Observe that $k_0=c$, $k_1= c-1$ and $k_{r-1} = 1$. Consider the rational curve $f\colon \PP^1 \to \PP^r$ defined by 
\[f= (s^{k_0}p: s^{k_1}t^{c-k_1}p: \dots : s^{k_{r-1}}t^{c-k_{r-1}}p: t^cq).\]
\begin{lem}[Sacchiero's Lemma]
The map $f$ is unramified and the normal bundle has the splitting type 
\[N_f \cong \bigoplus_{i=1}^{r-1} \OO_{\PP^1}(e+b_i).\]
\end{lem}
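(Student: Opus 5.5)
The strategy mirrors Example~\ref{ex:monomial_normal}. By the Jacobian sequence
\[0 \to \OO_{\PP^1}(1)^{\oplus 2} \stackrel{J}{\longrightarrow} \OO_{\PP^1}(e)^{\oplus (r+1)} \to N_f \to 0,\]
valid in characteristic $0$ (or large characteristic), we exhibit $r-1$ explicit relations among the columns of $J^T$. These relations have degrees $b_1,\dots,b_{r-1}$, and we show that they generate the full module of relations. The resulting syzygy presentation then gives $N_f \cong \bigoplus \OO_{\PP^1}(e+b_i)$.

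Write $f_i = s^{k_i}t^{c-k_i}p$ for $0\le i\le r-1$ and $f_r=t^cq$. The exponents of $s$ decrease by $\delta_i$ at step $i$, and the recursion $\delta_{i}+\delta_{i+1}=b_i$ shows that $k_{i-1}-k_{i+1}=b_i$. Three consecutive entries $f_{i-1},f_i,f_{i+1}$ with $i+1\le r-1$ all share the factor $p$, so the three-term relation of Example~\ref{ex:monomial_normal} applies to them. To see this, differentiate $g\cdot m_j$, where $m_j$ is a monomial. By Leibniz, $\nabla(p\,m_j)=m_j\nabla p+p\nabla m_j$. The monomial relation $\sum \alpha_j \nabla m_j=0$ has coefficients $\alpha_j$ that also satisfy $\sum \alpha_j m_j=0$: the coefficients are $(k_i-k_{i+1}),(k_{i+1}-k_{i-1}),(k_{i-1}-k_i)$, which sum to zero against the common monomial. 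Hence the $m_j\nabla p$ terms cancel, and we obtain $r-2$ relations of degrees $b_1,\dots,b_{r-2}$.

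The last relation, of degree $b_{r-1}$, must involve the column of $t^cq$. Here the $\delta$-recursion is what makes the numerology work. Comparing degrees of the sequence, the relation module is free of rank $r-1$ with degrees summing to $2e-2$. Since $\sum_{i\le r-2} b_i + b_{r-1}=2e-2$, the remaining generator automatically has degree $b_{r-1}$, \emph{provided} the $r-2$ relations above are part of a minimal generating set, that is, they span a saturated submodule. We would verify saturation by checking that the $(r-2)$-column matrix of these relations has its maximal minors without common zero on $\PP^1$. The relations are monomial-supported and staircase-shaped, so their minors include pure powers of $s$ and of $t$, which is enough. This step might instead be organised by computing the cokernel of the first $r$ columns directly.

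Unramifiedness amounts to $J$ being fiberwise injective, i.e.\ the $2\times 2$ minors of $J$ having no common zero. Away from $s=0$ and $t=0$, the coordinates $f_0,f_1$ give the local parametrisation $(1:t/s)$ up to the factor $p$, which suffices where $p\neq 0$. At zeros of $p$, use that $p$ and $q$ are general: the pair $(f_{r-1},f_r)=(st^{c-1}p,t^cq)$ together with $f_0$ separates tangent directions there. At $s=0$ and $t=0$, use the monomial structure, since $k_1=c-1$ and $k_{r-1}=1$ give a smooth branch. The main obstacle is the saturation/minimality claim for the relations, especially the interplay between the $p$ and $q$ factors in the final column. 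I would first try to handle it by specialising $q=p$, where the curve is monomial times $p$. This reduces the claim to Example~\ref{ex:monomial_normal}-type computations with a twist, after which upper semicontinuity of the splitting type, together with the fixed degrees summing to $2e-2$, yields the claim for general $q$.
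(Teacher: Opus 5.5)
The survey only cites Sacchiero's Lemma and gives no proof. The natural point of comparison is therefore the method of Example~\ref{ex:monomial_normal}: write down a complete set of relations, which here would include an explicit relation of degree $b_{r-1}$ involving the column of $t^cq$, and show that they generate.

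Your route is different, more economical, and correct. You only use the $r-2$ relations on the monomial part, with coefficients $\delta_{i+1}t^{b_i}$, $-b_is^{\delta_i}t^{\delta_{i+1}}$, $\delta_is^{b_i}$ on the columns of $f_{i-1},f_i,f_{i+1}$. The maximal minors on coordinates $0,\dots,r-3$ and on $2,\dots,r-1$ are triangular. They equal $\prod_{i=2}^{r-1}\delta_i\cdot t^{b_1+\cdots+b_{r-2}}$ and $\prod_{i=1}^{r-2}\delta_i\cdot s^{b_1+\cdots+b_{r-2}}$, which have no common zero. Hence $S=\bigoplus_{i\le r-2}\OO(-e-b_i)$ is a subbundle of $N_f^\vee\subset\OO(-e)^{\oplus(r+1)}$, and its quotient is a line bundle of degree $-e-b_{r-1}$. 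The payoff is that $q$ enters only through unramifiedness, and you never have to find the last relation.

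Three points need tightening.

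(1) ``Part of a minimal generating set'' and ``saturated'' are not equivalent in general. For instance, $(s,t)\colon\OO(-2)\to\OO(-1)^{\oplus 2}$ is saturated but not part of a basis. What actually finishes your argument is that $\Ext^1(\OO(-e-b_{r-1}),S)=\bigoplus_{i\le r-2}H^1(\OO(b_{r-1}-b_i))=0$, because $b_{r-1}$ is the largest $b_i$. So the extension splits. State this explicitly; it is where the ordering of the $b_i$ enters the splitting.

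(2) Drop your last paragraph. The ``main obstacle'' you name is already resolved by the minors above. Moreover, $q$ plays no role in it, since all $r-2$ relations vanish in the last coordinate. The proposed specialization $q=p$ would fail anyway, for two reasons. For $e>c$ it creates base points at the roots of $p$, so there is no degree-$e$ unramified map to specialize to. And semicontinuity bounds the general splitting type from one side only, so it cannot pin down a type that is not balanced.

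(3) Make unramifiedness at a root $x$ of $p$ precise. There $f(x)=(0:\cdots:0:1)$, and $f_0/f_r=s^cp/(t^cq)$ is a unit times $p$. This vanishes to order one for general $p$, which has simple roots, none shared with $stq$. At $(0:1)$ use $f_{r-1}/f_r=sp/(tq)$, and at $(1:0)$ use $f_1/f_0=t/s$; these require $p(0,1)\,q(0,1)\,p(1,0)\neq 0$.

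Finally, you need $e$ and the $\delta_i$ to be nonzero in the ground field, for the Jacobian presentation and for the minors respectively. This holds in characteristic $0$ or characteristic greater than $e$.
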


\begin{cor}
    Assume that the characteristic of the base field is larger than $e$. For $1 \leq i \leq r-1$, let  $b_i \geq 2$ be integers such $\sum_{i=1}^{r-1} b_i = 2e-2$. Then there exists an unramified, nondegenerate map $f\colon \PP^1 \to \PP^r$ of degree $e$ such that the normal bundle $N_f$ has the splitting type
    \[ N_f \cong \bigoplus_{i=1}^{r-1} \OO_{\PP^1}(e+b_i).\]
\end{cor}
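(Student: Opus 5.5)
The plan is to obtain the corollary by applying Sacchiero's Lemma to a suitable ordering of the $b_i$. The work lies in checking that the construction preceding the lemma is well-defined for this data, and that the resulting map is nondegenerate.

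\emph{Step 1: reorder and set up.} The splitting type does not depend on the order of the summands, so after reordering I may assume $2 \leq b_1 \leq \cdots \leq b_{r-1}$ with $\sum b_i = 2e-2$. These are exactly the hypotheses of the construction. Define $\delta_i$, $c$, $k_i$, $p$, $q$ as in the paragraph before Sacchiero's Lemma. The characteristic assumption ($>e$) is what makes the Euler-sequence/Jacobian description of $N_f$ valid, and Sacchiero's Lemma depends on that description. I will therefore work in that setting and take the lemma as given.

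\emph{Step 2: the construction makes sense.} I must check two things: every $\delta_i \geq 1$, so that $c=k_0>k_1>\cdots>k_{r-1}=1$ are strictly decreasing positive exponents; and $c \leq e$, so that $p$ and $q$ of degree $e-c$ exist.

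For the first, I will show by induction that $1 \leq \delta_i \leq b_i - 1$. The base case is $\delta_1=1\le b_1-1$. For the inductive step, $\delta_i = b_{i-1}-\delta_{i-1} \geq 1$, and
\[\delta_i \leq b_{i-1}-1 \leq b_i - 1,\]
where the last inequality uses the monotonicity of the $b_i$. This is exactly why the $b_i$ must be sorted.

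For the second, sum the relations $\delta_i+\delta_{i+1}=b_i$ over $1\le i\le r-2$. This gives
\[2\sum_i \delta_i = \delta_1+\delta_{r-1}+\sum_{i\le r-2} b_i \leq 1+(b_{r-1}-1)+\sum_{i\le r-2}b_i = 2e-2.\]
Hence $\sum_i\delta_i\le e-1$, that is, $c\le e$.

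\emph{Step 3: conclude.} By Sacchiero's Lemma, $f$ is unramified with $N_f\cong\bigoplus \OO_{\PP^1}(e+b_i)$. It remains to check nondegeneracy. The first $r$ coordinates are $p$ times the distinct monomials $s^{k_i}t^{c-k_i}$, all with $k_i\ge 1$, so they are linearly independent. The last coordinate $t^c q$ has $s$-exponent $0$ when $q$ is constant (the case $c=e$). When $c<e$ and $q$ is general, $t^cq$ is not divisible by $p$, so it is not a combination of the others. In either case the $r+1$ coordinates are independent, and $f$ is nondegenerate.

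I expect the main obstacle to be Step 2, specifically the positivity and monotonicity bookkeeping for the $\delta_i$. The rest is a direct appeal to the lemma.
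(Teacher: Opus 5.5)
Your proposal is correct and follows the paper's route: the paper derives the corollary directly from Sacchiero's Lemma, applied to the $b_i$ sorted in increasing order, and gives no further argument. Your checks that $1 \leq \delta_i \leq b_i - 1$, that $c \leq e$, and that $f$ is nondegenerate fill in details the paper leaves implicit. Nondegeneracy also follows at once from the splitting type, since by the paper's remark a degenerate curve would have an $\OO_{\PP^1}(e)$ summand in its normal bundle.
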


\noindent
In any characteristic, the normal bundle can be computed via the exact sequence
\[0 \to N_{X/\PP^r}^{\vee} (1) \to \OO_{\PP^1}^{\oplus(r+1)} \to \mathcal{P}(\OO_{\PP^1}(e))\to 0,\]
where $\mathcal{P}(\OO_{\PP^1}(e))$ is the principal parts bundle associated to $\OO_{\PP^1}(e)$; see \cite[Section 2.4]{clvgrassmannian}. Let $F\colon X \to X$ denote the relative Frobenius morphism. When the ground field has characteristic 2, the principal parts bundle can be expressed as \[\mathcal{P}(\OO_{\PP^1}(e)) = F^* F_* \OO_{\PP^1}(e).\] Consequently, the last two bundles are Frobenius pullbacks of bundles on $\PP^1$, and with not too much work you can see that the map between them is also pulled back under \(F\).  We conclude that $N_{X/\PP^r}^{\vee}(1)$ is also the pullback of a bundle via the Frobenius. Hence, all its summands must be divisible by 2. We conclude the following proposition.

\begin{prop}
    Assume that the ground field has characteristic 2 and let $X$ be a rational curve of degree $e$ in $\PP^r$. Then $N_{X/\PP^r} \cong \bigoplus_{i=1}^{r-1} \OO_{\PP^1}(a_i),$ where all $a_i \equiv e \mod 2$.
\end{prop}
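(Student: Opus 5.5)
The plan is to make the Frobenius argument sketched above precise. Write $X = \PP^1$ with parametrization $f\colon \PP^1 \to \PP^r$ of degree $e$, and work with the exact sequence
\[0 \to N_{X/\PP^r}^{\vee}(1) \to \OO_{\PP^1}^{\oplus(r+1)} \to \mathcal{P}(\OO_{\PP^1}(e)) \to 0,\]
where the twist $(1)$ means $f^*\OO_{\PP^r}(1) = \OO_{\PP^1}(e)$. The claim is that, in characteristic $2$, this whole sequence is the pullback under the relative Frobenius $F\colon \PP^1 \to \PP^1$ of a sequence of bundles on $\PP^1$. Granting this, $N_{X/\PP^r}^\vee(1) \cong F^*G$ for some bundle $G \cong \bigoplus \OO_{\PP^1}(g_i)$. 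Since $F^*\OO_{\PP^1}(g) = \OO_{\PP^1}(2g)$, every summand of $N_{X/\PP^r}^\vee(1)$ has even degree. Writing $N_{X/\PP^r} = \bigoplus \OO_{\PP^1}(a_i)$, the summands of $N^\vee_{X/\PP^r}(1)$ are $\OO_{\PP^1}(e - a_i)$, so $a_i \equiv e \pmod 2$, as desired.

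\textbf{Step 1: the principal parts bundle is a Frobenius pullback.} For $L = \OO_{\PP^1}(e)$, the first-order principal parts are $\mathcal{P}(L) = p_{1*}(p_2^*L \otimes \OO_{\PP^1\times\PP^1}/I_\Delta^2)$. In characteristic $2$, the local ring $k[t]/(t^2)$ at a point is exactly the fiber of $F$, and $\OO/I_\Delta^2$ coincides with the structure sheaf of the graph of $F$ composed with $F$, namely $\PP^1 \times_{F,\PP^1,F} \PP^1$. By flat base change this gives $\mathcal{P}(L) \cong F^*F_*L$. I would verify this locally: on $\operatorname{Spec} k[t]$, both sides are free of rank $2$ with bases $1, t\otimes 1 - 1\otimes t$ versus $1, t$, and $(t\otimes1 - 1\otimes t)^2 = t^2\otimes 1 - 1\otimes t^2$ lies in the ideal generated by Frobenius-invariant elements. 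Hence the identification glues.

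\textbf{Step 2: the evaluation map is pulled back.} The map $\OO^{\oplus(r+1)} = F^*\OO^{\oplus(r+1)} \to F^*F_*L$ sends $e_i$ to the jet of the section $f_i$. I expect this to be the adjoint-pullback of the map $\OO^{\oplus(r+1)} \to F_*L$ sending $e_i \mapsto f_i$, viewed as a section of $F_*L$ over the Frobenius-twisted curve. Under the isomorphism of Step 1, $F^*F_*L \to \mathcal{P}(L)$ takes $F^*s$ to the jet of $s$. This compatibility is the main obstacle, since everything hinges on the isomorphism in Step 1 being the natural one, compatible with jets of global sections. I would check it in the local coordinates above, where both sides send $s$ to $(s(t), s'(t))$ after expanding $s$ in the basis $\{1,t\}$ over $k[t^2]$.

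\textbf{Step 3: conclude.} Since the map of Step 2 is $F^*\phi$ for $\phi\colon \OO^{\oplus(r+1)} \to F_*L$, and $F$ is flat, its kernel is $F^*\ker\phi$. The map $\phi$ is surjective because its pullback is, and $F$ is faithfully flat. So $\ker\phi$ is a vector bundle $G$ of rank $r-1$, and $N^\vee_{X/\PP^r}(1) \cong F^*G$. The parity argument in the first paragraph then finishes the proof. For a ramified or singular map $f$, the normal sheaf of the map $N_f$ is used in place of $N_{X/\PP^r}$, and the same argument applies wherever the sequence above is exact.
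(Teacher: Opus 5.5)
Your proposal is correct and follows the paper's argument: you use the sequence $0 \to N_{X/\PP^r}^\vee(1) \to \OO_{\PP^1}^{\oplus(r+1)} \to \mathcal{P}(\OO_{\PP^1}(e)) \to 0$ and the identification $\mathcal{P}(\OO_{\PP^1}(e)) \cong F^*F_*\OO_{\PP^1}(e)$ in characteristic $2$. The evaluation map is $F^*$ of $\OO^{\oplus(r+1)} \to F_*\OO_{\PP^1}(e)$, so the kernel is a Frobenius pullback with even-degree summands. Your Steps 1--3 fill in the details the paper leaves as ``not too much work'': the fiber product over $F$ is the first-order neighborhood $2\Delta$ of the diagonal, base change sends $F^*s$ to the jet of $s$, and $F$ is flat.
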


In particular, in characteristic 2, there are further restrictions on the splitting type of the normal bundle of a rational curve, and the normal bundle of a general nondegenerate rational curve is not necessarily balanced. The following theorem summarizes the splitting type of the normal bundle of a general rational curve (see \cite{clvgrassmannian, coskunriedl, interpolation, sacchiero, ran}). 

\begin{thm}
The normal bundle of a general rational curve in $\PP^r$ is 2-balanced. 
    Moreover, if the base field does not have characteristic 2, then the general nondegenerate rational curve has balanced normal bundle. If the characteristic of the base field is 2, then the normal bundle of the general rational curve is the unique bundle $\bigoplus_{i=1}^{r-1} \OO_{\PP^1}(a_i)$, which is both 2-balanced and satisfies $a_i \equiv e \mod 2$.
\end{thm}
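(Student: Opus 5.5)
The theorem has three parts: 2-balancedness in general, balancedness away from characteristic 2, and the description of the generic splitting type in characteristic 2. For all of them I would use openness. Being $j$-balanced is an open condition in flat families, since it is characterized by $H^1(\End(V)(j-1))=0$. The space of unramified nondegenerate degree $e$ maps $\PP^1\to\PP^r$ is irreducible, being an open subset of the $(r+1)$-tuples of binary forms of degree $e$. So for each claim it suffices to exhibit a single unramified nondegenerate map with the desired normal bundle, or to degenerate to a nodal curve and use that openness persists in a family of nodal curves.

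\textbf{Characteristic $0$ or greater than $e$.} Write $2e-2=(r-1)q+\rho$ with $0\le\rho<r-1$. Since $e\ge r$, we have $q\ge 2$ whenever the curve is nondegenerate. Take $b_i=q$ for $r-1-\rho$ indices and $b_i=q+1$ for the remaining $\rho$ indices. The Corollary to Sacchiero's Lemma then produces an unramified nondegenerate $f$ with $N_f\cong\bigoplus\OO_{\PP^1}(e+b_i)$, which is balanced. Openness gives balancedness for the general curve.

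\textbf{Arbitrary characteristic.} Sacchiero's construction uses the Jacobian presentation of $N_f$, which is only available when the characteristic is large relative to $e$. So I would argue by induction on $e$ via degeneration. Specialize a degree $e$ rational curve to $X_1\cup L$, where $X_1$ has degree $e-1$ and $L$ is a line meeting $X_1$ at one point $p$. By Lemma~\ref{lem-harthir},
\[N|_{X_1}=N_{X_1/\PP^r}[p\posmodalong L] \quad\text{and}\quad N|_L=N_{L/\PP^r}[p\posmodalong X_1]=\OO(2)\oplus\OO(1)^{r-2}.\]
One then deforms the nodal curve to a smooth rational curve and tracks the splitting type of the glued bundle on the smoothing. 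The natural tool is the interpolation and modification formalism of \cite{interpolation, aly}. The key point is that a positive modification at a general point in a general direction raises the smallest summand of a balanced bundle, so balancedness (respectively 2-balancedness) is preserved. The base cases are rational normal curves, whose normal bundle is $\OO(r+2)^{\oplus(r-1)}$ in every characteristic, and degenerate curves. For the latter, the splitting $N_{X/\PP^s}\oplus\OO(e)^{r-s}$ lets us induct on $r$ as well.

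\textbf{Characteristic 2.} By the Proposition above, every $a_i\equiv e \pmod 2$. A 2-balanced bundle with all summands of the same parity and fixed total degree $(r+1)e+2\cdot 0-2$ on $\PP^1$ is unique: it must be $\OO(a)^{\oplus m}\oplus\OO(a+2)^{\oplus(r-1-m)}$ with $a$ and $m$ determined by the degree. So in characteristic 2 the statement reduces to 2-balancedness, which the inductive degeneration argument supplies. It only needs modifications to move summands by at most the allowed amount.

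\textbf{Main obstacle.} The hardest step is the inductive degeneration in small characteristic. A positive modification in the tangent direction of $L$ at $p$ is not a general direction, so one must show that the direction $T_pL$ can be chosen transverse to the relevant subbundles of $N_{X_1}$, namely the summands of minimal degree. In characteristic 2 these directions are constrained by the Frobenius structure, so one must check that the constraint does not force the imbalance to exceed 2. I would first attempt this by choosing $L$ through a general point with a general tangent direction, and then verify that the resulting elementary modification of a 2-balanced bundle stays 2-balanced.
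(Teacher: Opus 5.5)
Your argument in characteristic $0$ or characteristic $>e$ is correct. Since $e\ge r$ gives $q\ge 2$, the choice $b_i\in\{q,q+1\}$ is admissible in Sacchiero's corollary, and openness over the irreducible space of unramified maps finishes the argument. The degenerate case and the uniqueness of the 2-balanced bundle with $a_i\equiv e \pmod 2$ are also fine.

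\textbf{The gap.} Everything in characteristic $p\le e$ (odd $p$ and $p=2$) rests on the induction via $X_0=X_1\cup L$, and the principle you propose to drive it is not valid. Balancedness of $N_{X_0}|_{X_1}=N_{X_1}[p\posmodalong L]$ and of $N_{X_0}|_L=\OO(2)\oplus\OO(1)^{\oplus(r-2)}$ does not by itself determine whether $N_{X_t}$ is balanced on a smoothing. That depends on how the two restrictions are glued at the node. Moreover, ``balanced'' is not defined for a bundle on the nodal curve $X_0$, so the appeal to openness has no content yet.

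In fact your principle proves too much. Take characteristic $2$, $r=4$, and $X_1$ a rational normal quartic. Then $N_{X_1}\cong\OO(6)^{\oplus 3}$, so for \emph{every} line $L$ the restrictions $\OO(6)^{\oplus 2}\oplus\OO(7)$ and $\OO(2)\oplus\OO(1)^{\oplus 2}$ are balanced. Yet by the parity Proposition, the normal bundle of every rational quintic in $\PP^4$ has three odd summands of total degree $23$, so it is never balanced. Since your principle does not see the characteristic, it cannot be what proves balancedness for odd $p\le e$ either.

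Also, the obstacle you single out is not the real one. $L$ is an arbitrary line through $p$, so $T_pL$ is a completely general direction in every characteristic, including characteristic $2$.

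\textbf{What is missing} is control of the gluing data. One approach: in a smoothing with smooth total space, $L$ is a $(-1)$-curve. Let $\mathcal{N}$ be the relative normal bundle of the family. Twisting $\mathcal{N}$ by $\OO(2L)$ and making an elementary modification along $L$ that kills the quotient $\OO_L(-1)^{\oplus(r-2)}$ of $\mathcal{N}(2L)|_L$ makes the restriction to $L$ trivial. After contracting $L$, you find that $N_{X_t}$ specializes to $N_{X_1}[p\posmod T_pL](p)[p\posmod\lambda]$. Here $\lambda$ is the fiber at $p$ of the $\OO(2)$ summand of $N_{X_0}|_L$, a direction outside the image of $N_{X_1}|_p$.

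Balancedness then hinges on this first-order datum $\lambda$ avoiding a proper locus determined by $N_{X_1}$, which requires a genuine local computation at the node. This is exactly where characteristic $2$ differs: in the example above, $\lambda$ is forced into the bad position. Alternatively, you can use the interpolation degenerations of \cite{interpolation}.

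In characteristic $2$ you can bypass degeneration entirely. The principal parts sequence gives $N^\vee(1)\cong F^*K$ with $K=\ker(\OO^{\oplus(r+1)}\to F_*\OO(e))$. The Kodaira--Spencer argument used for $T\PP^r|_X$, now using $\Ext^1(F_*\OO(e),F_*\OO(e))=0$ and $H^1(K^\vee)=0$, shows that $K$ is balanced for a general map. Then $N\cong\bigoplus\OO(e+2k_i)$ with $|k_i-k_j|\le 1$, which is exactly the claimed bundle.
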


We now turn to the problem of understanding the loci of rational curves whose normal bundle has a specified splitting type. In $\PP^3$, Eisenbud and Van de Ven show that these loci are irreducible of the expected dimension (see \cite{eisenbudvandeven, ghionesacchiero}).

\begin{thm} \cite{eisenbudvandeven} 
    In characteristic 0, the locus of smooth nondegenerate rational curves $X$ of degree $e$ in $\PP^3$ whose normal bundle has a specified splitting type is irreducible of the expected dimension.  
\end{thm}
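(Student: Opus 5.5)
Throughout, $X$ is a smooth nondegenerate rational curve of degree $e$ in $\PP^3$, and $N_{X/\PP^3}\cong\OO_{\PP^1}(e+b_1)\oplus\OO_{\PP^1}(e+b_2)$ with $b_1\le b_2$, $b_1+b_2=2e-2$ and $b_1\ge 2$. The expected codimension of the splitting type is
\[h^1(\End(N))=\max(0,\,b_2-b_1-1).\]
The plan is to parametrize these curves by the map $f=(f_0:\dots:f_3)$. For the upper bound on dimension, the natural invariant is the subbundle $\OO(e+b_2)\subset N_{X/\PP^3}$, equivalently a sub-line bundle of $T\PP^3|_X$ of degree $e+b_2$ containing $T\PP^1$. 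Geometrically, this gives a one-dimensional family of planes (or a ruled surface) along $X$. I would follow the Eisenbud--Van de Ven approach and encode the splitting type through the degree $m$ of the minimal-degree surface-type relation. Concretely, $N$ has the summand $\OO(e+b_1)$ with $b_1$ small exactly when the map
\[\OO(1)^{2}\oplus\OO\to\OO(e)^{4}\]
built from the Jacobian $J$ admits a syzygy of low degree. Equivalently, by the exact sequence $0\to\OO(1)^2\xrightarrow{J}\OO(e)^4\to N\to 0$, $N$ has a summand of degree $e+b_1$ iff there exist four polynomials $g_i$ of degree $b_1$ with $\sum g_i\,\partial f_i/\partial s=\sum g_i\,\partial f_i/\partial t=0$.

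The key steps are as follows. First, for each splitting type, I would describe the locus as the image of an incidence correspondence
\[\{(f,g): \text{$g\in H^0(\OO(b_1))^4$ nonzero, $g$ a syzygy of $J_f$}\}\]
and compute its dimension fiberwise over $g$. Using the Euler relation, the conditions $\sum g_i\partial_s f_i=\sum g_i\partial_t f_i=0$ amount to $\sum g_i f_i=0$ together with a condition on the derivatives. So for fixed $g$ the $f$'s form a linear space, whose dimension can be computed as $h^0$ of a kernel bundle on $\PP^1$. Since everything is linear in $f$ for fixed $g$, the fibers are irreducible. The base of admissible $g$ (modulo scaling, with the appropriate genericity) is an open set in a projective space, hence irreducible. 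It follows that the incidence variety is irreducible whenever the fiber dimension is constant over an open dense set of $g$ containing all relevant points.

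Second, I would check the dimension count: $\dim\{g\}+\dim(\text{fiber})-\dim(\text{fiber of the projection to } f)$ should equal $4(e+1)-1-(b_2-b_1-1)$. Here the fiber of the projection to $f$ is $h^0(\OO(b_1-b_1))=1$ when $b_1<b_2$, modulo scalars. Third, for nonemptiness and smoothness of the general member, I would invoke Sacchiero's Lemma, which produces unramified nondegenerate maps of every type. The surjectivity of the Kodaira--Spencer-type argument shows the locus has at least the expected dimension at such points. Smoothness and embeddedness of the generic member of the irreducible incidence locus then follows by openness once one smooth example exists in it.

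The main obstacle is the fiber computation in the first step. One must show that, for $g$ in the relevant locus, the space of $f$ with prescribed syzygy has constant dimension, and that the degenerate $g$ (for instance $g$ with common zeros, or spanning fewer than $4$ directions) contribute only smaller-dimensional pieces or only degenerate curves, so that they cannot form extra components. I would handle this by stratifying by $\deg\gcd(g_i)$ and the rank of the span of the $g_i$, bounding each stratum's dimension by direct Riemann--Roch on $\PP^1$. Characteristic $0$ is used here to identify syzygies of $J$ with those of $f$ via the Euler relation.
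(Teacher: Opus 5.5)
Your setup is the right one: minimal syzygy $g$ of degree $m=b_1$, incidence over $g$, and the count $(4m+3)+(2e-2m+4)=2e+2b_1+7$, which matches the expected dimension. The gap is in the step you flag as the main obstacle, and your proposed remedy does not address it.

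Stratifying by $\deg\gcd(g_i)$ is vacuous. If $g=hg'$, then $g'$ is a syzygy of $J_f$ of lower degree, so minimality of $b_1$ already forces $\gcd(g_i)=1$. More seriously, in characteristic $0$ the relation is symmetric: $g$ is a syzygy of $J_f$ if and only if $f$ is a syzygy of $J_g$. So the fiber over $g$ is $H^0(K_g(e))$, where $K_g=\ker\bigl(J_g^T\colon\OO_{\PP^1}^{4}\to\OO_{\PP^1}(m-1)^{2}\bigr)$, and Riemann--Roch does not determine its dimension. Indeed $\deg K_g=-(2m-2)+\ell(g)$, where $\ell(g)$ is the total ramification of $g\colon\PP^1\to(\PP^3)^\vee$, and $h^1(K_g(e))$ depends on the splitting type of $K_g$. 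Hence the fiber dimension jumps along the locus of ramified $g$ (by exactly $\ell(g)$ when $h^1=0$) and along the locus where $K_g$ is very unbalanced. These loci in general contain nondegenerate $g$ with $\gcd(g_i)=1$, so your stratification does not see them. The claim that the fiber dimension is constant on an open set containing all relevant $g$ is therefore false, and nothing in the outline excludes extra components of the splitting locus lying over special $g$.

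To close the gap you need two further inputs. First, suppose $h^1(K_g(e))\neq 0$. Then $K_g\cong\OO(-c_1)\oplus\OO(-c_2)$ with $c_2\geq e+2$, and since $m\leq e-2$ when $b_1<b_2$, this forces $c_1<e$. So every $f\in H^0(K_g(e))$ has a common factor, and these fibers contain no degree-$e$ morphisms. Second, over the remaining $g$ the fiber has dimension $2e-2m+4+\ell(g)$, so you must show that the locus of $g$ with ramification $\ell$ has codimension greater than $\ell$. For nondegenerate $g$ this follows from dimensional transversality of ramification conditions on $\PP^1$ in characteristic $0$ \cite{EH83}: a ramification point of index $j$ imposes $3j-1\geq 2j$ conditions once the point is allowed to move. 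Degenerate $g$ (all planes through a point, or through a line) need a separate count, in which either the stratum is smaller or every $f$ in the fiber is degenerate.

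Combine this with the lower bound on the dimension of each component of a splitting locus, and you get irreducibility. That bound holds in any family by pulling back from a versal deformation. No surjectivity is needed, and the Kodaira--Spencer computation in the paper concerns $f^*T\PP^r$, not $N_f$.

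Finally, Sacchiero's Lemma yields unramified maps, not embeddings, so it supplies no smooth examples. In fact for $e\geq 4$ the type with $b_1=2$ contains no smooth curves. A degree-$2$ syzygy is a conic of planes through a point $P$, so $X$ lies on a quadric cone, meets each ruling once away from $P$, and hence has an $(e-2)$-fold point at $P$. Irreducibility of the smooth locus follows from that of the unramified locus whenever it is nonempty, but nonemptiness needs a separate construction.
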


Eisenbud and Van de Ven conjectured that similar results should hold in $\PP^r$. The first counterexample was given by Alzati and Re \cite{AlzatiRe} who showed that the locus of rational curves of degree 11 in $\PP^8$ whose normal bundle has splitting type $\OO_{\PP^1}(13)^{\oplus 3} \oplus \OO_{\PP^1}(14)^{\oplus 2} \oplus \OO_{\PP^1}(15)^{\oplus 2}$ has two components.  Coskun and Riedl showed that when $r \geq 5$, the loci of rational curves whose normal bundle has a given splitting type in general have many irreducible components of different dimensions \cite{coskunriedl}.

\begin{problem}
    Describe the components of the locus of rational curves in $\PP^r$ whose normal bundles have a specified splitting type. Give sharp upper and lower bounds on the dimensions of these loci.
\end{problem}

Since rational curves play a central role in the study of Fano varieties, it is interesting to study the restricted tangent and normal bundles of rational curves on other Fano varieties. We will next survey some recent progress in this direction.

\subsection{Restricted tangent bundles of rational curves in Grassmannaians}

Let $G(k,n)$ denote the Grassmannian parameterizing $k$-dimensional subspaces of an $n$-dimensional vector space $V$. The Grassmannian comes equipped with two tautological bundles: 
\begin{enumerate}
    \item The rank $k$ tautological subbundle $S$ associates to a point $[W] \in G(k,n)$ the $k$-dimensional subspace $W$.
    \item The rank $n-k$ tautological quotient bundle $Q$ associated to a point $[W]\in G(k,n)$ the $(n-k)$-dimensional quotient $V/W$.
\end{enumerate}
The tangent bundle of the Grassmannian has a tensor structure \[ TG(k,n) \cong S^{\vee} \otimes Q.\] Consequently, the restricted tangent bundle $TG(k,n)|_X$ cannot be balanced unless the degree $e$ of the rational curve is divisible by $k$ or $n-k$.

\begin{example}
    Let $X$ be a general twisted cubic curve in $G(2,4)$. Then the restriction of  $S^{\vee}$ and $Q$ to $X$ both split as $\OO_{\PP^1}(1) \oplus \OO_{\PP^1}(2)$. Hence, $TG(2,4)|_X \cong \OO_{\PP^1}(2) \oplus \OO_{\PP^1}(3)^{\oplus 2} \oplus \OO_{\PP^1}(4)$ is not balanced. In fact, this phenomenon holds for all odd degree rational curves. 
\end{example}

 Mandal \cite{Ma19} has studied the splitting types of the restricted tangent bundle of rational curves in Grassmannians. He begins by studying the splitting types of $S$ and $Q$. Let $a_1, \dots, a_k$ be nonpositive integers whose sum is $-e$. Let $b_1, \dots, b_{n-k}$ be nonnegative integers whose sum is $e$. Let $S(a_1, \dots, a_k)$ be the locus of rational curves $X$ of degree $e$ in $G(k,n)$ for which $S|_X \cong \bigoplus_{i=1}^k \OO_{\PP^1}(a_i)$. Similarly, let $Q(b_1, \dots, b_{n-k})$  be the locus of rational curves $X$ of degree $e$ in $G(k,n)$ for which $Q|_X \cong \bigoplus_{j=1}^{n-k} \OO_{\PP^1}(b_j)$. First, Mandal proves the following proposition.

\begin{prop} \cite[Proposition 5]{Ma19} 
   The loci  $S(a_1, \dots, a_k)$ and $Q(b_1, \dots, b_{n-k})$ are smooth and of the expected dimension. In particular, for a general rational curve in $G(k,n)$, the restriction of $S$ and $Q$ are both balanced.
\end{prop}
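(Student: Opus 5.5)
Following the argument given above for the restricted tangent bundle of rational curves in $\PP^r$, I will identify the space of degree $e$ rational curves in $G(k,n)$ with a space of bundle maps, show that the associated Kodaira--Spencer map is surjective, and then use this to get smoothness and the expected dimension. A morphism $f\colon \PP^1 \to G(k,n)$ of degree $e$ is the same as a quotient $V \otimes \OO_{\PP^1} \to f^*Q$ onto a rank $n-k$ bundle of degree $e$. Dually, it is an injection of bundles $f^*S \hookrightarrow V\otimes \OO_{\PP^1}$ with $\deg f^*S = -e$, whose cokernel is locally free. So $\Mor_e(\PP^1, G(k,n))$ is stratified by the splitting type $E=\bigoplus \OO_{\PP^1}(a_i)$ of $f^*S$. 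Near a point of $S(a_1,\dots,a_k)$, the data is a pair (a bundle $\mathcal{E}$ deforming $E$, a fiberwise injective map $\mathcal{E}\to V\otimes\OO$), taken modulo $\mathrm{Aut}$.

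\textbf{Step 1: surjectivity of Kodaira--Spencer.} Consider the family of bundles $f^*S$ over $\Mor_e$. The tangent space to $\Mor_e$ at $f$ is $H^0(f^*TG) = \Hom(f^*S, f^*Q)$. The Kodaira--Spencer map is
\[\Hom(f^*S, f^*Q) \to \Ext^1(f^*S, f^*S),\]
namely the connecting map obtained by applying $\Hom(f^*S,-)$ to $0\to f^*S\to V\otimes\OO\to f^*Q\to 0$. Its cokernel injects into $\Ext^1(f^*S, V\otimes \OO) = V\otimes H^1(f^*S^\vee)$. This group vanishes because every $a_i\le 0$, so $f^*S^\vee$ has summands of degree $-a_i\ge 0$. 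Hence Kodaira--Spencer is surjective. The analogous argument for $Q$ applies $\Hom(-,f^*Q)$ and uses $\Ext^1(V\otimes\OO, f^*Q)=V^\vee\otimes H^1(f^*Q)=0$, since $b_j\ge 0$.

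\textbf{Step 2: conclude.} $\Mor_e(\PP^1,G(k,n))$ is smooth of dimension $h^0(f^*TG)=ne+k(n-k)$, because $H^1(f^*TG)=H^1(f^*S^\vee\otimes f^*Q)=0$ (all summands have degree $\ge 0$). By Step 1 the classifying map to the versal deformation space of $E$ is smooth, i.e.\ a submersion on tangent spaces. The locus of splitting type $E$ in the versal deformation space is smooth of codimension $h^1(\End E)$. Pulling back along a smooth morphism preserves smoothness and codimension. Therefore $S(a_1,\dots,a_k)$ is smooth of codimension $h^1(\End(f^*S))$, which is the expected dimension, and similarly for $Q$. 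Because the balanced type is the generic stratum in the versal deformation space (it has $h^1(\End)=0$, so it is open), and the map to it is smooth, hence open, the general curve lies in the open balanced stratum; such curves exist since rational curves of every degree are present.

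\textbf{Main obstacle.} I expect the delicate point to be making Step 2 rigorous. One must argue that "surjectivity of Kodaira--Spencer at every point" really yields a smooth morphism to a versal deformation space, or at least to the stack $\mathrm{Bun}_{k,-e}(\PP^1)$, in which splitting loci are smooth of codimension $h^1(\End)$. My first approach is to realize $\Mor_e$ locally as an open subset of a space of fiberwise injective maps $\mathcal{E}\to V\otimes\OO$ over a versal family $\mathcal{E}$. Then, as above, the map to the base of $\mathcal{E}$ is smooth because it is surjective on tangent spaces between smooth spaces. Nonemptiness of each stratum also needs to be checked; I would do this by writing down direct sums of rational normal scrolls.
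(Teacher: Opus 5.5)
Your proposal is correct. The survey gives no proof of this proposition beyond citing \cite{Ma19}, but your argument is the Grassmannian version of the Kodaira--Spencer argument the survey runs for $T\PP^r|_X$ before Ramella's theorem; your direct identification of the Kodaira--Spencer map with the connecting map $\Hom(f^*S,f^*Q)\to\Ext^1(f^*S,f^*S)$ replaces the survey's factorization through $\phi$ and $\psi$.

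Three small points to tighten:
\begin{itemize}
\item For Step 2, the cleanest rigorous route is that $\Mor_e(\PP^1,G(k,n))\to\mathrm{Bun}_{k,-e}(\PP^1)$ is a morphism of smooth stacks that is surjective on $\Ext^1$, hence smooth. You then pull back the stratum $B\operatorname{Aut}(E)$, which has codimension $h^1(\End E)$.
\item The space of fiberwise injections over a versal base is not an open subset of $\Mor_e$, as you phrase it; it maps smoothly onto a neighborhood in $\Mor_e$ with positive-dimensional fibers.
\item Density of the balanced locus does not follow from openness alone. It follows because there are only finitely many unbalanced strata and each has codimension $h^1(\End)\geq 1$.
\end{itemize}
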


\noindent
For a curve $X$ in the intersection of $S(a_1, \dots, a_k)$ and $Q(b_1, \dots, b_{n-k})$, we have \[TG(k,n)|_X \cong \bigoplus_{i,j} \OO_{\PP^1}(b_j -a_i).\]
In characteristic 0, Mandal further proves the following theorem.

\begin{thm} \cite[Theorem 22]{Ma19}
    The intersection of $S(a_1, \dots, a_k)$ and $Q(b_1, \dots, b_{n-k})$ is nonempty and generically transverse. 
\end{thm}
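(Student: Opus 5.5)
The plan is to realize a curve in $S(a_1,\dots,a_k)\cap Q(b_1,\dots,b_{n-k})$ explicitly, and then use a Kodaira--Spencer style argument, as in the discussion preceding Ramella's theorem, to prove that the intersection is transverse there. A degree $e$ map $f\colon \PP^1 \to G(k,n)$ is the same as a rank $k$ subbundle $E = f^*S \subset V \otimes \OO_{\PP^1}$ of degree $-e$, with quotient $F = f^*Q$. This gives an exact sequence
\[0 \to E \to V\otimes \OO_{\PP^1} \to F \to 0.\]
Hence $\Mor_e(\PP^1, G(k,n))$ is an open subset of the Quot scheme, and its tangent space at $f$ is $H^0(E^\vee \otimes F) = H^0(f^*TG(k,n))$.

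\textbf{Step 1: nonemptiness.} Choose $E = \bigoplus_i \OO(a_i)$ and $F = \bigoplus_j \OO(b_j)$ with $\deg F = e = -\deg E$, and ask for a surjection $\OO^{\oplus n} \to F$ whose kernel is isomorphic to $E$. Since $F$ is globally generated, a general map $\OO^n \to F$ is surjective once $n \geq (n-k)+1$. Its kernel is then a rank $k$ bundle of degree $-e$ with all summands nonpositive, because $H^0(\text{kernel}^\vee)$ surjects appropriately. The problem reduces to showing that every such splitting type occurs as a kernel, which is a statement about syzygies on $\PP^1$. I would prove it by writing down a monomial-type block construction, in the spirit of Example~\ref{ex-monomialcurve}. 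Take $E$ as a direct sum and map each $\OO(a_i)$ into $\OO^{\oplus}$ so that the cokernel pieces glue. Concretely, build $F$ by choosing matrices of binary forms that realize a Kronecker-type normal form of a pencil. The Kronecker classification of matrix pencils gives exactly the freedom to prescribe both the column minimal indices (the $a_i$) and the row minimal indices (the $b_j$) independently, subject to the degree constraint. This correspondence is how I would produce a point in the intersection.

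\textbf{Step 2: transversality.} Both $S(a)$ and $Q(b)$ are preimages of splitting-type strata under the maps $f \mapsto E$ and $f \mapsto F$. The previous proposition says these have the expected codimensions $h^1(\End E)$ and $h^1(\End F)$. Generic transversality at a point amounts to surjectivity of the combined Kodaira--Spencer map
\[H^0(E^\vee\otimes F) \to \Ext^1(E,E)\oplus \Ext^1(F,F).\]
Using the long exact sequences obtained by applying $\Hom(E,-)$ and $\Hom(-,F)$ to the sequence above, the map factors through $\Ext^1(E,E)\to 0$ and similar terms, each of which is surjective separately. For joint surjectivity I would analyze the kernels. Given $(\alpha,\beta)$, lift $\alpha$ to $\phi_1$ and $\beta$ to $\phi_2$. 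The discrepancy lies in the image of $\Hom(V\otimes\OO, V\otimes \OO) = \mathfrak{gl}(V)$ (infinitesimal $GL(V)$-translations), and these act trivially on both splitting types. This is the key point: adjusting by an element of $\mathfrak{gl}(V)$ fixes one coordinate while correcting the other. Making this precise requires characteristic $0$, to identify $H^0$-level deformations with infinitesimal group actions.

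\textbf{Main obstacle.} Step 1 is the main obstacle: realizing arbitrary pairs of splitting types simultaneously, particularly when $n$ is small relative to the number of nonzero $a_i,b_j$. I would first try the Kronecker pencil construction, then argue by induction on $e$. The induction would degenerate to a union of a lower degree curve and a line, using openness of splitting types, and could be adapted from the argument for Example~\ref{ex-monomialcurve}.
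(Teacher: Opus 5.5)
\textbf{Transversality.} Your reduction to surjectivity of $(\kappa_E,\kappa_F)\colon \Hom(E,F)\to\Ext^1(E,E)\oplus\Ext^1(F,F)$ is right. So is the separate surjectivity of $\kappa_E$ and $\kappa_F$, which follows from $\Ext^1(E,W)=\Ext^1(W,F)=0$ with $W=V\otimes\OO_{\PP^1}$. The survey only quotes this result from Mandal, so I am judging your argument on its own terms, and the joint step fails.

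With $\iota\colon E\to W$ and $\pi\colon W\to F$, one has $\ker\kappa_E=\pi_*\Hom(E,W)$ and $\ker\kappa_F=\iota^*\Hom(W,F)$. What you need is $\ker\kappa_E+\ker\kappa_F=\Hom(E,F)$. The image $\{\pi A\iota : A\in\mathfrak{gl}(V)\}$ lies in $\ker\kappa_E\cap\ker\kappa_F$; as you note yourself, these directions preserve both splitting types. So adding such a term changes \emph{neither} coordinate and corrects nothing. Nor is there any reason for the difference of your two lifts to lie in this small subspace, since each lift is only determined modulo the much larger spaces $\ker\kappa_E$ and $\ker\kappa_F$.

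The missing input is that $\kappa_E\circ\iota^*\colon\Hom(W,F)\to\Ext^1(E,E)$ is surjective; then you adjust an arbitrary lift of $\beta$ by an element of $\ker\kappa_F$. This map factors as $\Hom(W,F)\to\Ext^1(W,E)\to\Ext^1(E,E)$. The first arrow is the connecting map for $\Hom(W,-)$, which is onto since $\Ext^1(W,W)=0$. The second is restriction along $\iota$, which is onto since $\Ext^2(F,E)=0$ on a curve. This gives transversality at every point of the intersection with no characteristic assumption, so your remark that characteristic $0$ is needed here is off.

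\textbf{Nonemptiness.} Step 1 contains no construction. The Kronecker classification concerns pencils $\OO(-1)^q\to\OO^p$. Their minimal indices are the splitting types of the kernel and cokernel of a map that is neither injective nor surjective. That is not a short exact sequence $0\to E\to\OO^n\to F\to 0$, and you give no dictionary between the two. Block sums of monomial curves and their duals cannot produce, for example, $a=(-2,-2)$, $b=(1,3)$. Degeneration plus openness of splitting types runs the wrong way: semicontinuity only says a generization is at least as balanced, so it cannot place you in a prescribed non-generic stratum.

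One argument that works is to take a general extension $0\to E\to W\to F\to 0$. Since $\deg W=0$ and $H^0(E(-1))=0$, $W$ is trivial if and only if the cup product $H^0(F(-1))\to H^1(E(-1))$ is injective, and both spaces have dimension $e$. In monomial bases, the $(i,j)$ block of this map is a generic $(-a_i)\times b_j$ Hankel matrix. If you index rows and columns consecutively, the global antidiagonal meets each block in a full block antidiagonal. So a suitable extension class makes the matrix the antidiagonal permutation matrix. Hence the general extension has $W\cong\OO^{n}$, which gives a point of $S(a_1,\dots,a_k)\cap Q(b_1,\dots,b_{n-k})$.
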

In particular, this theorem determines all the possible splitting types of $TG(k,n)|_X$. However, unlike the case of $\PP^r$, the locus of rational curves where $TG(k,n)|_X$ has a specified splitting type is not necessarily irreducible. Different splitting types of $S$ and $Q$ may result in the same splitting type for $TG(k,n)|_X$. This often produces many different irreducible components of the locus where $TG(k,n)|_X$ has a given splitting type. Mandal proves lower and upper bounds for the number of components and shows that there is always a component of the expected dimension.

\subsection{Normal bundles of rational curves in Grassmannians}
The fact that $TG(k,n)|_X$ is not balanced under certain numerical conditions obstructs the normal bundle  $N_{X/G(k,n)}$ from being balanced. 
By the division algorithm, write 
$$e= kq_1 + r_1 \ \ \mbox{and} \ \ e= (n-k)q_2 + r_2$$ with $0 \leq r_1 < k$ and $0 \leq r_2 < n-k$. If $X$  is a general rational curve of degree $e > 1$ in $G(k,n)$, then the splitting type of $TG(k,n)|_X$ is
\[\OO_{\PP^1}(q_1 + q_2 + 2)^{\oplus r_1 r_2} \oplus \OO_{\PP^1} (q_1 + q_2 +1)^{\oplus (r_1 (n-k-r_2)+ r_2(k-r_1))} \oplus \OO_{\PP^1}(q_1+q_2)^{\oplus(k-r_1)(n-k-r_2)}.\]
If $q_1+ q_2-1 < (k-r_1)(n-k-r_2)$, then twisting the sequence 
\[   0 \to T_X  \to TG(k,n)|_X \to N_{X/G(k,n)} \to 0\] 
 by $\OO_{\PP^1}(-q_1-q_2-2)$ and comparing the first cohomology groups, we see that $N_{X/G(k,n)}$ must have a summand equal to  $\OO_{\PP^1}(q_1+q_2)$. On the other hand, if $r_1 r_2 \not= 0$, then $N_{X/G(k,n)}$ must also have a summand equal to $\OO_{\PP^1}(t)$ with $t \geq q_1 + q_2 + 2$. Hence, the normal bundle cannot be balanced. However, we have the following theorem.

\begin{thm}\cite[Theorem 1.1]{clvgrassmannian}
    Let $X$ be a general rational curve of degree $e$ in $G(k,n)$. Then the normal bundle $N_{X/G(k,n)}$ is $2$-balanced.
\end{thm}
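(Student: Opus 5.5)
Since $2$-balancedness is an open condition in flat families (it is equivalent to $H^1(\End(N)(1))=0$), it suffices to exhibit, for each degree $e$, one rational curve of degree $e$ in $G(k,n)$ with $2$-balanced normal bundle that lies in the closure of the locus of general rational curves. I would argue by degeneration, as in the $\PP^r$ case.

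First, handle low degree directly. For $e=1$, lines in $G(k,n)$ are homogeneous, and their normal bundle is $\OO(1)^{\oplus a}\oplus\OO^{\oplus b}$, which is balanced. Next, consider small rational normal scroll-type curves: curves with $S|_X$ and $Q|_X$ balanced, built from coordinate subspaces, for which $TG(k,n)|_X$ is the explicit sum $\bigoplus_{i,j}\OO(b_j-a_i)$ of Mandal's setup. For these I would compute the normal bundle explicitly by linear algebra, as for monomial curves in Example~\ref{ex:monomial_normal}. The aim is a base family in which the summands of $N$ lie in $[q_1+q_2,\,q_1+q_2+2]$.

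For the induction step, degenerate a general degree-$e$ curve to $X_1\cup L$, where $X_1$ is a general rational curve of degree $e-1$ and $L$ is a line meeting it at one point $p$. Lemma~\ref{lem-harthir} gives
\[N_{X_1\cup L}|_{X_1}\cong N_{X_1}[p\posmodalong L].\]
A bundle on a tree of rational curves whose restriction to $X_1$ is $2$-balanced after a single positive modification, and which glues compatibly to $N|_L$, smooths to a $2$-balanced bundle on $\PP^1$ of one higher degree. To arrange this, one checks that the direction $T_pL$ can be chosen generally enough in $N_{X_1}|_p$: it should avoid the subspace spanned by the maximal-degree summands, so that the modification raises a minimal-degree summand. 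Since lines through $p$ in $G(k,n)$ sweep out the cone over a Segre $\PP^{k-1}\times\PP^{n-k-1}$ in $T_pG$, I need to show this cone is not contained in the bad linear subspace of $N_{X_1}|_p$. Its span is all of $T_pG$, so only linear subspaces containing $T_pX_1$ can fail, and a genuine subspace of $N|_p$ is not among these. Taking the general direction, the balancing is improved or preserved, while the parity and numerical constraints force exactly the shape predicted in the discussion preceding the theorem.

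The main obstacle is that $2$-balanced is not an inductive hypothesis that closes up. A single modification of a $2$-balanced bundle can create a $3$-gap if the top summands already sit at $q_1+q_2+2$. I would therefore strengthen the induction to pin down the exact splitting type: the summand $\OO(q_1+q_2)$ with the forced multiplicity, the summands $\OO(q_1+q_2+2)$ only as forced by $r_1r_2$, and the rest at $q_1+q_2+1$. When the naive one-point step fails, I would instead attach a line at two points, or attach a conic or a degree-$k$ curve to move the residues $r_1,r_2$ in step. I would also verify, via the exact-sequence comparison already made before the theorem, that the strengthened type is compatible with the subbundle $T X$.
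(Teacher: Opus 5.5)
\textbf{There is a genuine gap.} The smoothing step is where the whole content of the theorem lies, and it is asserted rather than proved.

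Going from degree $e-1$ to $e$ raises $\deg N$ by $n$. Only one unit of this comes from the modification $N_{X_1}[p\posmodalong L]$. The other $n-1$ units sit on $N_{X_1\cup L}|_L=N_L[p\posmodalong X_1]$, where $N_L=\OO(1)^{\oplus(n-2)}\oplus\OO^{\oplus(k-1)(n-k-1)}$, so this restriction is not perfectly balanced. After the standard elementary transformation along $L$ that makes the $L$-part trivial, the general fiber is controlled by (at least as balanced as) the following bundle: $N_{X_1}[p\posmodalong L]$, further modified at $p$ toward the fiber $P$ of the positive-degree part of $N_L[p\posmodalong X_1]$, and once more toward its degree-$2$ direction if it has one. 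Here $P$ has dimension $n-2$ or $n-1$.

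Whether this bundle is $2$-balanced depends entirely on how $P$ meets the splitting filtration of $N_{X_1}$ at $p$. Without a transversality statement, your claim that a bundle which is $2$-balanced on $X_1$ and "glues compatibly" smooths to a $2$-balanced bundle is false. For instance, take $\OO\oplus\mathcal M$ with $\mathcal M$ of degree $2$ on $X_1$ and $1$ on $L$: the restrictions are $\OO\oplus\OO(2)$ and $\OO\oplus\OO(1)$, but the general fiber is $\OO\oplus\OO(3)$.

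In $G(k,n)$ the space $P$ is far from general. It comes from $\{\phi\in\Hom(\Lambda,V/\Lambda):\phi(\ker\phi_L)\subseteq\operatorname{im}\phi_L\}$, where $\phi_L$ spans $T_pL$. So it is pinned down by a single rank-one vector and, for fixed $p$, moves in an at most $(n-2)$-dimensional family. Your Segre-cone argument is correct, but it only controls the single direction $T_pL$ and says nothing about $P$. The danger you name, a single modification creating a $3$-gap, cannot occur for a general direction; the real danger is where the $n-1$ units from $L$ land.

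\textbf{The mechanism as stated proves too much.} Raising minimal-degree summands never destroys balancedness. So if a general line always placed the new degree on minimal summands, induction from lines would give \emph{balanced} normal bundles in every degree. This already fails for $k=1$, where there is no Segre restriction at all:
\begin{itemize}
\item a conic in $\PP^r$ with $r\ge 3$ is a limit of two lines and has $N=\OO(4)\oplus\OO(2)^{\oplus(r-2)}$;
\item more generally, a general rational curve of degree $2\le e<r$ has $N=\OO(e)^{\oplus(r-e)}\oplus\OO(e+2)^{\oplus(e-1)}$;
\item in $G(k,n)$, the $H^1$ comparison preceding the theorem forces unbalance, e.g.\ for $e=5$ in $G(2,6)$.
\end{itemize}
So transversality genuinely fails, and the real work is to show it never fails by more than one step. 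Your proposal contains no mechanism for this.

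\textbf{The proposed repair does not supply one either.} The exact splitting type you want to carry is false as stated. In the $\PP^r$ examples above there are summands at $q_1+q_2+2$ although $r_1r_2=0$. In characteristic $2$ with $k=1$, all summands are $\equiv e\pmod 2$, so summands at both $q_1+q_2$ and $q_1+q_2+1$ cannot occur. Outside those exceptions, your type is balanced exactly when the conjecture stated right after the theorem (\cite[Conjecture 1.2]{clvgrassmannian}) predicts balancedness. Carrying it through an induction would therefore prove that conjecture, which the survey reports as settled only for $(k,n)\in\{(2,4),(2,5),(2,6)\}$ in characteristic $0$. Finally, the fallback attachments (a line at two points, a conic, a degree-$k$ curve) are not specified enough to evaluate.
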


In analogy with $\PP^r$, it is natural to expect that the normal bundle of a general rational curve $N_{X/G(k,n)}$ is balanced if the degree $e$ of the curve is sufficiently large (and the characteristic is different from 2). In fact, in \cite{clvgrassmannian} the authors made the following more precise conjecture.

\begin{conj}\cite[Conjecture 1.2]{clvgrassmannian}
    Let $X$ be a general rational curve of degree $e$ in $G(k, n)$, where without loss of generality we take \(k \leq n-k\).
Then $N_{X/G(k, n)}$ is balanced if and only if either \(e = 1\) or none of the following occur:
\begin{itemize}
\item The degeneracy exceptions: We have \(e < n-k\) and \(e \neq k\).
\item The characteristic \(2\) exceptions: The characteristic is $2$ and \(k = 1\) and \(e \not\equiv 1\) mod \(n-k - 1\).
\item The tangent bundle splitting exceptions: We have $q_1+ q_2 \leq (k-r_1)(n-k-r_2)$ and \(r_1 r_2 \neq 0\).
\end{itemize}
\end{conj}

Recently, An Cao has resolved the conjecture in cases \((k, n) = (2, 4)\), \((2, 5)\), and \((2, 6)\) when the characteristic is \(0\) \cite{cao}.

\subsection{Restricted tangent bundles of rational curves in hypersurfaces}
 In characteristic 0, Mioranci has studied restricted tangent bundles of rational curves in general hypersurfaces in $\PP^r$ \cite{mioranci2}. His main theorem is the following.

\begin{thm}\cite[Theorem 1.1]{mioranci2}
  Let $Y \subset \PP^r$  be a smooth Fano 
hypersurface of degree $d$, with $3 \leq d \leq r$.
\begin{enumerate}
    \item The restricted tangent bundle $TY|_X$ of a degree $e \leq \frac{r-1}{r+1-d}$ rational curve $X \subset Y$ is never balanced.
    \item A general hypersurface $Y$ contains rational curves of degree $e$ with balanced restricted
tangent bundle for every $e > \frac{r-1}{r+1-d}$.
\end{enumerate}
\end{thm}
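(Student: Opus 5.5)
Part (1) comes from a numerical obstruction, the same kind used above for Grassmannians. Part (2) is a construction: produce one good curve on a general hypersurface, then argue by openness.

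\emph{Part (1).} Let $f\colon \PP^1 \to Y$ have degree $e$. Restricting the tangent sequence of $Y \subset \PP^r$ gives
\[0 \to TY|_X \to T\PP^r|_X \to \OO_{\PP^1}(de) \to 0,\]
so $TY|_X$ has rank $r-1$ and degree $(r+1)e - de = (r+1-d)e$. The tangent map $T\PP^1 = \OO_{\PP^1}(2) \to TY|_X$ is nonzero. Hence $TY|_X$ has a summand $\OO_{\PP^1}(a)$ with $a \geq 2$ (or, for multiple covers, one containing the image of $\OO_{\PP^1}(2)$). If $TY|_X$ were balanced, all summands would have degree at most $\lceil (r+1-d)e/(r-1)\rceil$. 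When $e \leq \frac{r-1}{r+1-d}$, the slope is at most $1$. So balanced would force every summand to have degree $\leq 1$, which contradicts the existence of a summand of degree $\geq 2$. The only thing to check is the boundary case, slope exactly $1$: there balanced means all summands are $\OO_{\PP^1}(1)$, which again contradicts the degree-$\geq 2$ summand. So (1) is elementary.

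\emph{Part (2).} I would argue by degeneration and induction on $e$, using that balancedness is open (vanishing of $H^1(\End(V)(-1))$, as in the $j$-balanced discussion).

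The first step is the base case. I would take $e_0$ to be the smallest integer exceeding $\frac{r-1}{r+1-d}$. I would construct on a general $Y$ a rational curve of degree $e_0$ with balanced $TY|_X$, for instance a general curve in the family of lines, conics or low-degree curves. For this I would use the incidence correspondence $\{(X,Y) : X \subset Y\}$ over $\Mor_e(\PP^1,\PP^r)$. It suffices to exhibit one pair, even with $Y$ special (for example a Fermat-type or cone-like hypersurface), with $TY|_X$ balanced, provided the relevant component of the incidence dominates the space of hypersurfaces. Dominance follows when $H^1(N_{X/Y})=0$, which holds for free curves once the splitting is balanced of positive slope.

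The second step is the induction. I would glue a balanced curve $X_1$ of degree $e_1$ to a line or conic $X_2$ in $Y$ at one point, and smooth the result using freeness of both pieces. The limit of $TY|_X$ is the bundle on the nodal curve obtained by gluing $TY|_{X_1}$ and $TY|_{X_2}$. The task is then to show that a general such gluing yields a balanced bundle on $\PP^1$. This can be done by the standard argument of choosing the gluing identification at the node generically, which corresponds to elementary modifications along summands of the two splittings.

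I expect the main obstacle to be this gluing step, specifically controlling the gluing data so that the summands interleave correctly. When $TY|_{X_2}$ for a line is quite unbalanced (for instance $\OO(2)\oplus\OO(1)^{a}\oplus\OO^{b}$), a single attachment only improves balance when the general-gluing argument moves the maximal summands. Here I would first try gluing at several points with several lines, possibly chosen with general tangent directions, and use the surjectivity of the Kodaira--Spencer-type map to reach the generic splitting. A secondary difficulty is the dominance needed in the base case near the threshold $e_0$.
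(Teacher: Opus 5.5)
The survey gives no proof of this theorem (it only cites \cite{mioranci2}), so I am assessing your argument on its own terms.

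\textbf{Part (1).} This part is correct and complete. In characteristic $0$ the differential $T\PP^1 \to f^*TY$ is nonzero, which forces a summand of degree at least $2$. Balancedness with slope $(r+1-d)e/(r-1) \leq 1$ forces all summands to have degree at most $1$. Put differently, a balanced $TY|_X$ of slope $>1$ makes $X$ very free, and $\lfloor \frac{r-1}{r+1-d}\rfloor + 1 = \lceil \frac{r}{r+1-d}\rceil$ is exactly the numerical lower bound for the degree of a very free curve. One slip: balancedness is the vanishing of $H^1(\End(V))$, whereas the vanishing of $H^1(\End(V)(-1))$ characterizes perfect balance. Openness is unaffected.

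\textbf{Part (2), the base case.} Part (2) is a plan in which both essential steps are missing, and the first is that the base case is never constructed. Lines never qualify, since $d \geq 3$ forces $e_0 \geq 2$. Moreover $e_0$ can be as large as $r$ (when $d = r$). ``One pair on a special $Y$, e.g.\ Fermat-type'' is precisely what has to be produced, and the survey remarks that special hypersurfaces can fail to carry low-degree very free curves.

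The base case can in fact be extracted from results quoted in the survey. At $e = e_0$ the bundle $N_{X/Y}$ has rank $r-2$ and slope in $[1,2)$; this uses $d \geq 3$. So if $N_{X/Y}$ is balanced, its summands lie in $\{1,2\}$, hence $\Ext^1(N_{X/Y}, \OO_{\PP^1}(2)) = 0$, and $TY|_X \cong \OO_{\PP^1}(2) \oplus N_{X/Y}$ is balanced. Since $e_0 \leq r$ and $d \geq 3$, Theorem~\ref{thm-balancedci} provides such a curve on a general $Y$.

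More generally, for $e \leq r$ you can use the surjectivity of $H^0(I_{X/\PP^r}(d)) \to \Hom(N_{X/\PP^r}, \OO_{\PP^1}(ed))$ for a rational curve spanning $\PP^e$. It lets you realize $TY|_X$, with $Y$ smooth along $X$, as the kernel of any surjection $T\PP^r|_X \to N_{X/\PP^r} \to \OO_{\PP^1}(ed)$. This reduces those degrees to linear algebra on $\PP^1$.

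\textbf{Part (2), the induction step.} More seriously, your induction step relies on a freedom you do not have. The limit of $TY|_X$ on $X_1 \cup_p X_2$ is $TY|_{X_1 \cup X_2}$, whose restrictions to the two components are glued by the identity of $T_pY$. Nothing can be chosen generically.

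What you can control through semicontinuity is the relative position in $T_pY$ of the canonical degree filtrations of $TY|_{X_1}$ and $TY|_{X_2}$ at $p$. For a line $L$, these are the line $T_pL$ and the fiber of the $\OO(2) \oplus \OO(1)^{r-1-d}$ part, which is the tangent space along $L$ to the cone of lines through $p$. Showing that these are in general position relative to the fibers of the top summands of $TY|_{X_1}$ is a genuine geometric statement that you have not addressed.

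It is also delicate. For $d = r-1$ there are only finitely many lines through a general point. For $d = r$, lines are not free and sweep out only a divisor of $Y$, so they cannot be attached at a general point of $X_1$.

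The appeal to Kodaira--Spencer surjectivity is unsupported. The survey's argument concerns $T\PP^r|_X$ and uses the Euler sequence, which has no analogue for $TY|_X$ on a fixed $Y$.

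One way to sidestep general position is to make one component perfectly balanced. From $0 \to \End(V|_{X_1})(-p) \to \End(V) \to \End(V|_{X_2}) \to 0$ you get $H^1(\End(V)) = 0$ on the nodal curve whenever $V|_{X_1}$ is perfectly balanced and $V|_{X_2}$ is balanced, regardless of the gluing. But then you must construct such components, which need $(r-1) \mid (r+1-d)e_1$ and slope at least $2$, together with enough base cases. You must also check that the nodal curves smooth inside $Y$. The proposal does none of this.
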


Under the Pl\"{u}cker embedding, the Grassmannian $G(2,4)$ is a smooth quadric hypersurface in $\PP^5$. We have already seen that for odd degree rational curves $X$ on $G(2,4)$, the restricted tangent bundle $TG(2,4)|_X$ is not balanced. Mioranci shows that this phenomenon continues to hold for odd degree rational curves on quadric hypersurfaces in $\PP^r$.

\begin{thm}\cite[Theorem 1.2]{mioranci2}
Let $Y \subset \PP^r$, $r\geq 3$, be a quadric hypersurface.
\begin{enumerate}
    \item For every even degree $e \geq 2$, the quadric $Y$ contains a degree $e$ rational curve $X$ with balanced restricted tangent bundle $TY|_X \cong \OO_{\PP^1}(e)^{r-1}$.
    \item The restricted tangent bundle $TY|_X$ is never balanced for an odd degree rational curve \(X\). For every odd $e \geq 1$, there exists a degree $e$ rational curve $X$ whose restricted tangent bundle satisfies $TY|_X \cong \OO_{\PP^1}(e-1) \oplus \OO_{\PP^1}(e)^{\oplus(r-3)} \oplus \OO_{\PP^1}(e+1)$ (so is in particular \(2\)-balanced).
\end{enumerate}
\end{thm}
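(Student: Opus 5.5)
The proof splits into the never-balanced claim in (2) and two explicit constructions. For the constructions I will write \(Y\) as the smooth quadric \(x_0x_1+x_2x_3+\cdots=0\), or its odd-dimensional analogue with an extra square term. I will use the normal bundle sequence of \(Y\subset\PP^r\) restricted to the curve \(X\):
\[0\to TY|_X\to T\PP^r|_X\to \OO_{\PP^1}(2e)\to 0.\]
The rank of \(TY|_X\) is \(r-1\), and its degree is \((r+1)e-2e=(r-1)e\). So the slope is exactly \(e\), and "balanced" for \(TY|_X\) means \(TY|_X\cong\OO_{\PP^1}(e)^{\oplus(r-1)}\).

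\textbf{Odd degree is never balanced.} The strategy is to exploit the self-duality coming from the quadratic form. The quadric gives a nondegenerate symmetric pairing on \(TY\) with values in \(\OO_Y(1)\): restricting the symmetric form \(q\) to the tangent spaces, modulo the radical direction given by the point, induces a nondegenerate form \(TY\otimes TY\to\OO_Y(1)\). Hence \(TY|_X\) is an orthogonal bundle with values in \(\OO_{\PP^1}(e)\).

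Suppose \(TY|_X\cong \OO(e)^{\oplus(r-1)}\). Then \(TY|_X(-e/2)\) would be trivial, which already requires \(e\) even. To make this precise without half-twists: the pairing gives \(TY|_X\cong (TY|_X)^\vee(e)\). Since the bundle is trivial after twisting, \(\End(TY|_X)\) is trivial, so the form is a constant symmetric nondegenerate matrix. That yields a nowhere-vanishing section of \(\OO(e)\), which is impossible for \(e\geq 1\). So no balanced \(TY|_X\) exists in odd degree; I expect this argument to be routine once the pairing is set up.

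For the existence part of (2), the target \(\OO(e-1)\oplus\OO(e)^{\oplus(r-3)}\oplus\OO(e+1)\) is the nearest splitting compatible with the orthogonal structure. I will build it by gluing: take a line \(L\subset Y\) and a curve \(C\) of even degree \(e-1\) with balanced \(TY|_C\), meeting at one point. Then smooth the union to a rational curve of degree \(e\). \(TY\) restricts to each component, and standard limit arguments for \(\PP^1\) bundles on a chain \(L\cup C\) bound the splitting. For a line, \(TY|_L\cong\OO(2)\oplus\OO(1)^{\oplus(r-3)}\oplus\OO\), which gives the \(e=1\) case directly. Alternatively, one can write an explicit monomial-type parametrization in coordinates adapted to the form and compute the kernel of \(T\PP^r|_X\to\OO(2e)\), using Example~\ref{ex-monomialcurve} for the splitting of \(T\PP^r|_X\).

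\textbf{Even degree, part (1).} For even \(e\), take \(X\) lying on a smooth conic section of \(Y\), or parametrize directly. Using the \(e=2\) conic case \(TY|_X\cong\OO(2)^{\oplus(r-1)}\), I will compose with a degree-\(e/2\) cover \(\PP^1\to\PP^1\) of the conic. The pullback of \(\OO(2)^{\oplus(r-1)}\) under this cover is \(\OO(e)^{\oplus(r-1)}\), but the resulting map is not unramified. Since (1) only asks for a curve, this may be acceptable. If an unramified curve is required, I will deform using openness of balancedness in flat families.

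The main obstacle is the conic computation in general dimension. I plan to handle it by writing the conic as \((s^2:t^2:st:0\ldots)\) in a hyperbolic basis and computing the kernel of the quadric's differential on \(\OO(2)^{\oplus(r+1)}\) modulo \(\OO\) explicitly.
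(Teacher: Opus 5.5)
The genuine gap is your proof that $TY|_X$ is never balanced in odd degree, which is the heart of (2). The pairing you describe is not $\OO_Y(1)$-valued. Write $\PP^r=\PP(V)$ and $L=\OO_Y(-1)$ for the tautological line. Then $TY\cong\mathcal{H}om(L,L^\perp/L)=(L^\perp/L)(1)$, and the form on $L^\perp/L$ is $\OO_Y$-valued, so the form on $TY$ takes values in $\OO_Y(2)$. It must: $TY|_X\cong (TY|_X)^\vee(e)$ would force slope $e/2$, not $e$. With the correct twist, a balanced $TY|_X\cong\OO(e)^{\oplus(r-1)}$ simply carries a constant nondegenerate symmetric form with values in $\OO(2e)$, and nothing is contradicted. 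Your argument never uses the parity of $e$. Taken literally, it would forbid balancedness for even $e$ too, contradicting part (1).

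What is needed is a mod $2$ invariant. For instance, suppose $TY|_X\cong\OO(e)^{\oplus(r-1)}$. Then $(L^\perp/L)|_X\cong TY|_X(-e)$ is trivial with a constant form, so it contains a constant isotropic subbundle of rank $m=\lfloor (r-1)/2\rfloor$. Its preimage $M\subset L^\perp|_X\subset V\otimes\OO_{\PP^1}$ is a maximal isotropic subbundle of rank $m+1$ with $\deg M=-e$. This is a map $\PP^1\to OG(m+1,V)$ of Pl\"ucker degree $e$. The Pl\"ucker line bundle on a maximal orthogonal Grassmannian is the square of the spinor line bundle (the relevant determinant is a squared Pfaffian), so $e$ is even. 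Equivalently, orthogonal bundles on $\PP^1$ carry a $\mathbb{Z}/2$-valued invariant (essentially $w_2$). It is additive in the curve class, vanishes on the trivial bundle, and is nonzero for a line, where $(L^\perp/L)|_{\text{line}}\cong\OO(1)\oplus\OO^{\oplus(r-3)}\oplus\OO(-1)$.

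This gap carries over to your existence argument in (2). Semicontinuity on $L\cup C$ only shows that the generic splitting is either balanced or $\OO(e-1)\oplus\OO(e)^{\oplus(r-3)}\oplus\OO(e+1)$. Excluding the balanced case is exactly the missing obstruction.

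Part (1) has a separate issue. The image of your multiple cover is a conic, and openness alone does not give a birational degree-$e$ curve. You would also need that $\Mor_e(\PP^1,Y)$ is smooth at $f$ (true, since $H^1(f^*TY)=0$) and that multiple covers form a lower-dimensional locus.

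A direct construction avoids both problems. Take a general map $f\colon\PP^1\to Q=Y\cap\PP^3\cong\PP^1\times\PP^1$ of bidegree $(a,b)$ with $a+b=e$ and $|a-b|\le 1$. Then $f^*TQ\cong\OO(2a)\oplus\OO(2b)$ and $f^*N_{Q/Y}\cong\OO(e)^{\oplus(r-3)}$. The extension splits because $H^1(\OO(2a-e)\oplus\OO(2b-e))=0$, which gives both $\OO(e)^{\oplus(r-1)}$ and $\OO(e-1)\oplus\OO(e)^{\oplus(r-3)}\oplus\OO(e+1)$.

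Two smaller points. Your conic $(s^2:t^2:st:0:\cdots)$ does not lie on $x_0x_1+x_2x_3+\cdots=0$. The conic case, which you flag as the main obstacle, is immediate from $N_{X/Y}\cong\OO_X(1)^{\oplus(r-2)}$ for a plane section.
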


It would be interesting to study the analogous question on Fano complete intersections in $\PP^r$ and more generally on Fano complete intersections in homogeneous varieties. 

\begin{problem}\label{prob-restan}
    Let $Y$ be a Fano manifold. Determine the possible splitting types of $TY|_X$ where $X$ is a rational curve. Describe the irreducible components of the locus of rational curves where $TY|_X$ has a given splitting type and determine their dimensions. 
\end{problem}

\subsection{Normal bundles of rational curves in complete intersections in homogeneous varieties} We begin with the case of $\PP^r$.
Let $X$ be a smooth rational curve of degree \(e\) in \(\PP^r\), and consider the natural map
\[I_{X / \PP^r} \to \mathcal{H}\mbox{om}(N_{X/\PP^r}, \OO_X).\]
Twisting by \(\OO_{\PP^r}(d)\) and taking global sections, we obtain a natural map \[\phi\colon H^0(I_{X/\PP^r}(d)) \to \Hom(N_{X/\PP^r}, \OO_{\PP^1}(ed)).\]
The main observation is that this map is surjective if $d\geq 3$ and $X$ is a rational curve of degree $e \leq r$ that spans a $\PP^e$ (see \cite[\S 3]{coskunriedlci}). As a consequence, one obtains the following theorem.

\begin{thm}\cite[Main Theorem]{coskunriedlci}\label{thm-balancedci}
    Let $2 \leq d_1 \leq d_2 \leq \cdots \leq d_c$ be positive integers such that $\sum_{i=1} d_i \leq r$. Let $Y \subset \PP^r$ be a general Fano complete intersection of type $(d_1, \dots, d_c)$. 
    \begin{enumerate}
        \item If $d_c \geq 3$, then $Y$ contains rational curves with balanced normal bundle of every degree $1 \leq e \leq r$.
        \item If $d_1 = \cdots = d_c=2$ and $r \geq 2c+1$, then $Y$ contains rational curves with balanced normal bundle of every degree $1 \leq e \leq r-1$.
    \end{enumerate}
\end{thm}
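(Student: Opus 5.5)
We will realize $Y$ as a complete intersection \emph{containing} a fixed rational curve $X$ with balanced normal bundle in $\PP^r$. We then use the surjectivity of $\phi$ to choose the defining hypersurfaces so that $N_{X/Y}$ is also balanced. Fix $e \le r$ (respectively $e \le r-1$), and let $X$ be a general rational curve of degree $e$ spanning a $\PP^e$. Its normal bundle is
\[N_{X/\PP^r} \cong N_{X/\PP^e} \oplus \OO_{\PP^1}(e)^{\oplus (r-e)},\]
with $N_{X/\PP^e}$ balanced. For a rational normal curve in $\PP^e$ this is $\OO_{\PP^1}(e+2)^{\oplus(e-1)}$. Since $X$ is smooth, the normal bundle sequence
\[0 \to N_{X/Y} \to N_{X/\PP^r} \to \bigoplus_{i=1}^c \OO_{\PP^1}(ed_i) \to 0\]
holds for any $Y = V(F_1,\dots,F_c) \supset X$ that is smooth along $X$. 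The quotient map is exactly $(\phi(F_1),\dots,\phi(F_c))$. So $N_{X/Y}$ is the kernel of a map $N_{X/\PP^r} \to \bigoplus \OO(ed_i)$, and by the surjectivity of $\phi$ (for $d \ge 3$) every such map arises from hypersurfaces containing $X$.

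The core step is a linear-algebra statement on $\PP^1$. Given $N = \OO(e+2)^{\oplus(e-1)} \oplus \OO(e)^{\oplus(r-e)}$, a general surjection $N \to \bigoplus \OO(ed_i)$ has balanced kernel; this needs $\sum d_i \le r$ to keep the kernel rank and degree in range. I would prove this inductively, peeling off one hypersurface at a time. Each kernel of a general map $\OO(a)^{\oplus m}\oplus\OO(a+1)^{\oplus n} \to \OO(b)$ with $b$ large enough is again balanced; this is a standard fact (general syzygy bundles are balanced), checkable by a dimension count on $h^1$ of twists. The concern is whether the intermediate steps remain balanced with respect to the mixed splitting $\OO(e+2)$, $\OO(e)$. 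If $N_{X/\PP^r}$ is itself not balanced, I would handle this by ordering the hypersurfaces so the first maps kill the $\OO(e)$ summands' imbalance.

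For quadrics, $\phi$ need not be surjective, so I would instead compute the image. Quadrics containing $X \subset \PP^e$ give maps only from $N_{X/\PP^e}$, but quadrics of the form $x_j \ell$ (with $x_j$ vanishing on $\PP^e$) give arbitrary maps on the $\OO(e)$ summands. The hypothesis $r \ge 2c+1$ and $e\le r-1$ should ensure enough freedom to run the same peeling argument. Afterwards I would check that a general member of the chosen linear system is smooth, by Bertini away from $X$ and by surjectivity of the normal map along $X$, and is general among complete intersections. For the latter, note that the incidence of pairs $(X,Y)$ dominates the space of complete intersections, since $X$ is unobstructed in $Y$ because $H^1(N_{X/Y})=0$ and the expected dimension is nonnegative in the Fano range. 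Balancedness is open, so it spreads to the general $Y$.

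The main obstacle is the balanced-kernel step, especially for quadrics and for $e$ near $r$. There the $\OO(e)$ summands are few and the target degrees $2e$ are large relative to the summands. I would first try explicit monomial choices of the maps, as in Example~\ref{ex:monomial_normal}, and verify the splitting by a relations computation.
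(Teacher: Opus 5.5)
Your reduction is correct and follows the route the paper indicates. You fix a rational normal curve $X$ spanning $\PP^e$ and write $N_{X/Y}$ as the kernel of $(\phi(F_1),\dots,\phi(F_c))$. You use surjectivity of $\phi$ in degrees $\ge 3$ to realize any surjection. Then you spread out to a general $Y$ via the incidence correspondence once $H^1(N_{X/Y})=0$. For that last step you only need $Y$ to be a complete intersection smooth along $X$, which is exactly surjectivity of the map. The Bertini claim is superfluous, and not automatic anyway: the base locus of $H^0(I_X^2(d))$ can be larger than $X$.

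Beyond this reduction, the whole content is the existence of a \emph{realizable} surjection $N_{X/\PP^r}\to\bigoplus_i\OO_{\PP^1}(ed_i)$ with balanced kernel, and you leave that open. Even when all $d_i\ge 3$, your induction has no first step. For $2\le e\le r-1$ the bundle $\OO(e+2)^{e-1}\oplus\OO(e)^{r-e}$ is not balanced, so the fact about general maps out of balanced bundles does not apply. You need a separate argument that a general map to $\OO(ed_1)$ has balanced kernel, e.g.\ maximal rank of $H^0(N(t))\to H^0(\OO(ed_1+t))$ at the relevant twists. Moreover, the cokernel of $\Hom(N,\OO(ed_j))\to\Hom(K_{j-1},\OO(ed_j))$ lies in $\bigoplus_{i<j}H^1(\OO(e(d_j-d_i)))$. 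So the restricted maps are general only if you peel in increasing order of the $d_i$; you are not free to reorder to ``kill the imbalance''.

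The serious gap is the quadrics. They occur in case (2), and also in case (1), where only $d_c\ge 3$ is assumed; for $e=r$ there are then no $\OO(e)$ summands at all. In degree $2$, the maps you can realize on $N_{X/\PP^e}=\OO(e+2)^{e-1}$ form only the $\binom{e}{2}$-dimensional image of $H^0(I_{X/\PP^e}(2))$. This sits inside the $(e-1)^2$-dimensional $\Hom(\OO(e+2)^{e-1},\OO(2e))$ and is a proper subspace for $e\ge 3$, so no general-map lemma applies.

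In case (2) you also cannot sidestep these quadrics. If every $F_i$ has the form $\sum_{j>e}x_j\ell_{ij}$, then $\phi(F_i)$ kills $N_{X/\PP^e}$, so the kernel contains $\OO(e+2)^{e-1}$. A balanced bundle with such a subbundle has all summands of degree $\ge e+1$. But the kernel has rank $r-1-c$ and degree $e(r+1-2c)-2<(e+1)(r-1-c)$ (using $r\ge 2c+1$, $e\le r-1$), so it is unbalanced for $e\ge 2$.

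Hence you must determine which maps the quadrics through a rational normal curve actually induce. You then need to show that, together with the $x_j\ell$ terms (and, in case (1), the general maps from the higher-degree hypersurfaces), they include surjections with balanced kernel. This is exactly where $r\ge 2c+1$ and $e\le r-1$ have to be used, and ``should ensure enough freedom'' gives no mechanism. As written, the proposal proves the theorem only modulo its main step.
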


\noindent
Ran has also studied rational curves on Fano hypersurfaces (see \cite{ran21, ran23}).

An irreducible variety $Y$ is \defi{separably rationally connected ({SRC})}  if there exists a variety $Z$ and a morphism $e\colon Z\times \PP^1\to Y$ such that the induced morphism on products,
\[
e^{(2)}\colon Z\times \PP^1\times \PP^1 \to Y\times Y,
\]
is smooth (and hence dominant). We refer the reader to \cite{Kol96} for a  discussion of the properties of SRC varieties. A rational curve $X \subset Y$ is called \defi{free} if the normal bundle $N_{X/Y}$ is globally generated, or equivalently, if all the summands in the splitting type of $N_{X/Y}$ have nonnegative degree. 
 The curve $X$ is called \defi{very free} if $N_{X/Y}$ is ample, or equivalently, if all the summands in the splitting type of $N_{X/Y}$ have positive degree.
If $Y$ is a smooth variety over an algebraically closed field, then $Y$ is SRC if $Y$ contains a  very free rational curve \cite[Theorem IV.3.7]{Kol96}.

In characteristic 0, rational connectedness and separable rational connectedness agree and smooth Fano varieties are SRC \cite[Theorem V.2.13]{Kol96}.  Koll\'{a}r points out that SRC is the suitable generalization of rational connectedness to arbitrary characteristic and poses the question: Is every smooth Fano variety in positive characteristic SRC? Koll\'{a}r's question has been answered affirmatively for general Fano complete intersections in $\PP^r$  \cite{CZ14,Tia15}. Likewise, if $Y$ is any smooth Fano complete intersection of type $(d_1,\ldots,d_c)$ of Fano index at least 2 and the characteristic of the base field is greater than $\max(d_1,\ldots,d_c)$, then $Y$ is SRC \cite{STZ18}. Theorem \ref{thm-balancedci} has the following important consequence (see \cite[Theorem 5.6]{coskunriedlci}).

\begin{thm}
    Let $Y \subset \PP^r$ be a general Fano complete intersection of type $(d_1, \dots, d_c)$. Set $d = \sum_{i=1}^c d_i$. Then $Y$ contains very free rational curves of every degree $e \geq \left\lceil \frac{r-c+1}{r-d+1} \right\rceil$. In particular, $Y$ is SRC.
\end{thm}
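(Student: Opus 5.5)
We start from Theorem~\ref{thm-balancedci} and need to produce, for every $e \geq e_0 \colonequals \lceil \frac{r-c+1}{r-d+1}\rceil$, a rational curve $X \subset Y$ of degree $e$ with ample normal bundle. Two facts drive the argument. First, $\deg N_{X/Y} = e(r+1-d) - 2$ and $\rk N_{X/Y} = r-c-1$, since $\deg TY|_X = e(r+1-d)$ and $\deg TX = 2$. Second, a balanced bundle with positive average slope is ample once its degree is at least its rank. The condition $\deg N_{X/Y} \ge \rk N_{X/Y}$ reads $e(r-d+1) \geq r-c+1$, which is exactly $e \geq e_0$. So every balanced rational curve of degree $e \geq e_0$ on $Y$ is very free. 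By Theorem~\ref{thm-balancedci}, very free curves exist for all $e$ with $e_0 \leq e \leq r$, or $e \le r-1$ in the all-quadrics case.

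We should check that this range is nonempty, i.e.\ $e_0 \le r-1$. Since $d \le r$, we have $r-d+1\ge 1$, so
\[e_0 \le r-c+1 \le r-1 \quad \text{when } c\ge 2.\]
When $c=1$ we have $e_0 = \lceil r/(r-d+1)\rceil$. If $d<r$ this is at most $r/2 < r-1$. If $d=r$ (so $d\ge 3$) it equals $r$, which lies in the range of case (1).

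Degrees larger than $r$ are handled by gluing. Take very free curves $X_1$ of degree $e_1$ and $X_2$ of degree $e_2$ meeting transversally in one point on $Y$. By Lemma~\ref{lem-harthir}, adapted to $Y$, the normal bundle $N_{X_1\cup X_2/Y}$ restricts to each $X_i$ as a positive elementary modification of an ample bundle. That restriction is again ample, so $H^1(N(-p))=0$ for a point $p$. Standard smoothing, as in Koll\'ar, then shows that the nodal curve deforms to a smooth rational curve of degree $e_1+e_2$. Ampleness is open, and the relevant $H^1$-vanishing on the nodal curve gives very freeness of the general smoothing. Taking $X_2$ a line or conic through a point of $X_1$, and using that very free curves pass through general points, we can induct on $e$ from the base range.

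The main obstacle is the gluing step. We must ensure that $Y$ contains a suitable very free curve of small degree meeting $X_1$, or else glue two copies of base-range curves: any $e > r$ can be written as a sum of degrees already obtained once the base range contains two consecutive values. That holds because the base range contains $e_0$ and $e_0+1$ whenever $e_0 \le r-2$. The remaining edge cases, $e_0\in\{r-1,r\}$, need individual attention; for instance $e_0 = r$ forces $c=1$, $d=r$. For those we would use a self-gluing with two copies meeting at a point, plus openness. Finally, SRC follows from the existence of one very free curve by \cite[Theorem IV.3.7]{Kol96}.
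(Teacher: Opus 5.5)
Your first step is the same deduction the paper makes from Theorem~\ref{thm-balancedci}. You use $\deg N_{X/Y}=e(r-d+1)-2$ and $\rk N_{X/Y}=r-c-1$, so a balanced normal bundle is ample exactly when $e\ge e_0$, which gives very free curves for $e_0\le e\le r$ (or $r-1$). The gap is in passing to $e>r$.

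First, your gluing lemma assumes both components are very free, but you then attach lines or conics, which are generally not very free:
\begin{itemize}
\item a line has $\deg N_{L/Y}=r-d-1<r-c-1=\rk N_{L/Y}$ (since $d\ge 2c$), so it is never very free;
\item a conic is very free only when $e_0\le 2$;
\item when $d=r$, a line is not even free ($\deg N_{L/Y}=-1$).
\end{itemize}
Second, your fallback is false: it is not true that every $e>r$ is a sum of base-range degrees once the base range contains two consecutive values. Two consecutive values only generate all \emph{sufficiently large} integers. Sums of at least two degrees from $[e_0,R]$ cover everything above $R$ only if $2e_0\le R+1$. This fails for every index-one complete intersection: if $d=r$, then $e_0=r-c+1$ and $2e_0\ge r+2$ because $r\ge 2c$. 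For instance, for type $(2,2,3)$ in $\PP^7$ the base range is $\{5,6,7\}$, so $e_0=r-2$ is not one of your edge cases. Yet $8$ and $9$ are not sums of such degrees. Self-gluing only produces degrees $2e$, so it does not fill these gaps either.

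The missing idea is to glue very free curves to curves that are merely free, and Theorem~\ref{thm-balancedci} supplies these too. A balanced curve of degree $e$ is free as soon as $e(r-d+1)\ge 2$: this gives lines when $d<r$, and conics and cubics when $d=r$. Your modification argument adapts to this setting. Write $N=N_{X_1\cup X_2/Y}$. If $X_2$ is free, then $N|_{X_2}(-p)=N_{X_2/Y}[p\posmod T_pX_1](-p)$ has no $H^1$, while $N|_{X_1}$ is ample. Hence $H^1(N(-D))=0$ for a degree-$2$ divisor $D$ on $X_1$, and the general smoothing is very free. Attaching free lines (index at least $2$) or free conics and cubics (index one with $c\ge 2$) then reaches every $e\ge e_0$. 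For $c=1$, $d=r$, the degree $r+1$ is not of the form very free plus free, and it needs a separate argument.

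Note also that Theorem~\ref{thm-balancedci}(2) requires $r\ge 2c+1$. So your base step does not cover intersections of $c$ quadrics in $\PP^{2c}$, and that case needs a separate base step.
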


\begin{rem}
    Certain special Fano hypersurfaces are known not to have  very free curves of low degree \cite{She12, Bridges, cheng}. 
\end{rem}

This discussion can be generalized to Fano complete intersections in other homogeneous varieties or weighted projective spaces (see \cite{coskunsmithvf}). The main technical tool is the following theorem.

\begin{thm}\cite[Theorem 2.1]{coskunsmithvf}\label{csmain}
Let $Y \subset \PP^r$ be a projective variety whose ideal sheaf is generated in degree $k$. Let $X$ be a rational normal curve of degree $e$ contained in the smooth locus of $Y$. Let $1 \leq c\leq \dim(Y)-2$ be an integer. For $1 \leq i \leq c$, let $D_i = d_i H + E_i$ be Cartier divisor classes on $Y$, where $d_i \geq \max(k,3)$, $H$ is the hyperplane class and $E_i$ are effective divisors such that the restriction map
\[
H^0(Y,E_i)\to H^0(X,E_i|_X)
\]
is surjective. Given a surjective map $$q\in \mathrm{Hom}\left(N_{X/Y}, \bigoplus_{1\leq i \leq c} \OO(D_i)|_X\right),$$ there are hypersurfaces $Z_i$ with $[Z_i]=D_i$ such that if $Z=\bigcap_{i=1}^c Z_i$, then $Z$ is smooth of codimension $c$ along $C$ and $N_{X/Z} \cong \ker q$. 
\end{thm}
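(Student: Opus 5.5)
The idea is to realize $Z$ as a complete intersection $Z_1\cap\cdots\cap Z_c$ chosen so that the composite $N_{X/Y}\to N_{Z/Y}|_X\cong\bigoplus_i \OO(D_i)|_X$ equals the given $q$. Once this holds, the normal bundle sequence $0\to N_{X/Z}\to N_{X/Y}\to N_{Z/Y}|_X\to 0$ identifies $N_{X/Z}$ with $\ker q$.

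For a single hypersurface $Z_i\supset X$ cut out by a section $s_i\in H^0(Y,\OO(D_i))\cap H^0(I_{X/Y}(D_i))$, the induced map $N_{X/Y}\to \OO(D_i)|_X$ is the image of $s_i$ under the natural map
\[\phi_i\colon H^0(Y, I_{X/Y}(D_i)) \to \Hom(N_{X/Y},\OO(D_i)|_X),\]
which comes from $I_{X/Y}\to I_{X/Y}/I_{X/Y}^2=N_{X/Y}^\vee$. If the $c$ maps $\phi_i(s_i)$ together form a surjection $q$, then the differentials of the $s_i$ are independent along $X$. Hence $Z$ is smooth of codimension $c$ in a neighbourhood of $X$ (here we use that $X$ lies in the smooth locus of $Y$), with $N_{Z/Y}|_X=\bigoplus\OO(D_i)|_X$. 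The theorem therefore reduces to showing that each $\phi_i$ is surjective.

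\textbf{Step 1: the case $E_i=0$.} Since $X$ is a rational normal curve, it spans a $\PP^e$, and $N_{X/Y}$ is a quotient of $N_{X/\PP^r}$. It is therefore enough to show that every homomorphism $N_{X/\PP^r}\to\OO_{\PP^1}(ed_i)$ that vanishes on $N_{Y/\PP^r}|_X$ lifts to a degree-$d_i$ form that vanishes on $X$ and lies in the ideal of $Y$ modulo the appropriate terms.

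For $\PP^r$ itself, I would use the Euler-sequence description $N_{X/\PP^r}=\operatorname{coker}(\OO(1)^2\xrightarrow{J}\OO(e)^{r+1})$. Then a homomorphism to $\OO(ed)$ is a tuple $(g_0,\dots,g_r)$ of binary forms of degree $e(d-1)$ annihilated by $J$. I would realize such a tuple as the partial derivatives restricted to $X$ of $F=\sum_j x_j G_j$, where the $G_j$ are forms of degree $d-1$ in the ideal of $X$ (when the tuple allows) or arbitrary lifts. This is the argument cited from \cite[\S 3]{coskunriedlci}, and the hypothesis $d\ge 3$ is exactly what makes the quadrics in $I_X$ generate enough syzygies.

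To pass to $Y$, I would use that $I_Y$ is generated in degree $k\le d_i$. Any form of degree $d_i$ vanishing on $Y$ induces the zero map on $N_{X/Y}$. Homomorphisms $N_{X/Y}\to\OO(d_iH)|_X$ are exactly those homomorphisms on $N_{X/\PP^r}$ that kill $N_{Y/\PP^r}|_X$, and the lifts can be chosen modulo $I_Y$. The condition $d_i\ge k$ ensures that restriction from $\PP^r$ to $Y$ of $H^0(I_X(d_i))$ is compatible.

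\textbf{Step 2: general $D_i=d_iH+E_i$.} Choose a section $\sigma\in H^0(Y,E_i)$ not vanishing identically on $X$. Given $\psi\in\Hom(N_{X/Y},\OO(D_i)|_X)$, write $\OO(D_i)|_X=\OO(d_iH)|_X\otimes\OO(E_i)|_X$. Since $\OO(E_i)|_X$ is a line bundle on $\PP^1$ of nonnegative degree, $\psi$ decomposes as a sum $\sum_\ell \tau_\ell\cdot\psi_\ell$ with $\psi_\ell\in \Hom(N_{X/Y},\OO(d_iH)|_X)$ and $\tau_\ell\in H^0(X,E_i|_X)$. This decomposition holds because $\Hom(N,\OO(a+b))$ is spanned by products when $N$ is a quotient of $\OO(e)^{r+1}$ and degrees are large, using surjectivity of multiplication of sections on $\PP^1$.

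By hypothesis each $\tau_\ell$ lifts to some $\tilde\tau_\ell\in H^0(Y,E_i)$, and by Step 1 each $\psi_\ell=\phi(F_\ell)$. Then $s_i=\sum\tilde\tau_\ell F_\ell$ satisfies $\phi_i(s_i)=\psi$, since $\phi$ is linear over sections that are regular along $X$.

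\textbf{Step 3: genericity.} Finally, smoothness of $Z$ along $X$ follows from the surjectivity of $q$, as explained above. Away from $X$ we only need the local statement near $X$, so no Bertini argument is required beyond that.

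\textbf{Main obstacle.} The hardest step is Step 1's surjectivity onto $\Hom(N_{X/Y},\OO(d_iH)|_X)$. Here the multiplication-by-sections decomposition in Step 2 also needs care, since $N_{X/Y}$ need not be generated in a single degree. I would first try to prove it directly on $\PP^r$ via the principal-parts sequence, which works in all characteristics, and then descend to $Y$.
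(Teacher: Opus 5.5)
Your overall plan is the right one. You reduce to surjectivity of $\phi_i\colon H^0(Y,I_{X/Y}(D_i))\to\Hom(N_{X/Y},\OO(D_i)|_X)$. You handle $E_i=0$ via the surjectivity of $H^0(I_{X/\PP^r}(d))\to\Hom(N_{X/\PP^r},\OO_X(d))$ for $d\geq 3$ \cite[\S 3]{coskunriedlci}. Then you multiply by lifted sections of $E_i$.

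\textbf{Gap in Step 1.} The descent from $\PP^r$ to $Y$ rests on a reversed exact sequence. The normal bundle sequence is $0\to N_{X/Y}\to N_{X/\PP^r}\to N_{Y/\PP^r}|_X\to 0$, so $N_{X/Y}$ is a \emph{sub}bundle of $N_{X/\PP^r}$, not a quotient. Your description of $\Hom(N_{X/Y},\OO_X(d_i))$ as the homomorphisms on $N_{X/\PP^r}$ ``killing'' $N_{Y/\PP^r}|_X$ is therefore wrong. The homomorphisms that factor through the quotient $N_{Y/\PP^r}|_X$ are exactly those vanishing on $N_{X/Y}$, i.e.\ the ones induced by forms in $I_Y$, as you correctly observe.

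What you actually need is that every $\psi\colon N_{X/Y}\to\OO_X(d_i)$ \emph{extends} to $N_{X/\PP^r}$. Once it does, Coskun--Riedl give $F\in H^0(I_{X/\PP^r}(d_i))$ inducing the extension, and $F|_Y$ induces $\psi$ by functoriality of conormal sheaves. The obstruction to extending lies in $\Ext^1(N_{Y/\PP^r}|_X,\OO_X(d_i))=H^1(X,N^\vee_{Y/\PP^r}|_X\otimes\OO_X(d_i))$. This is exactly where the hypotheses on $Y$ are used:
\begin{itemize}
\item $I_Y(k)$ is globally generated, hence so is its quotient $(I_Y/I_Y^2)(k)=N^\vee_{Y/\PP^r}(k)$ near $X$.
\item So $N^\vee_{Y/\PP^r}|_X(k)$ splits on $X\cong\PP^1$ with nonnegative summands.
\item Its twist by $\OO_X(d_i-k)$, with $d_i\geq k$, therefore has no $H^1$.
\end{itemize}
You never establish this vanishing, and saying that $d_i\geq k$ makes restriction ``compatible'' does not replace it. As written, your argument would go through verbatim when $d_i<k$, where the conclusion is false. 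For instance, take $X$ a line on a smooth surface $Y\subset\PP^3$ of degree $m\geq 5$ (so $k=m$) and $d_i=3$. Then $H^0(Y,I_{X/Y}(3))=H^0(\PP^3,I_X(3))$, so $\phi_i$ factors through $\Hom(N_{X/\PP^3},\OO_X(3))$, which has dimension $6$. But $\Hom(N_{X/Y},\OO_X(3))=H^0(\OO_{\PP^1}(m+1))$ has dimension $m+2>6$.

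\textbf{Justification in Step 2.} The same sub/quotient confusion affects your reasoning here. $N_{X/Y}$ is not in general a quotient of $\OO(e)^{\oplus(r+1)}$, since its summands can have degree $<e$. The correct reason the multiplication map $H^0(N^\vee_{X/Y}(d_i))\otimes H^0(\OO(E_i)|_X)\to H^0(N^\vee_{X/Y}(d_i)\otimes\OO(E_i)|_X)$ is onto is as follows. Because $N_{X/Y}\subset N_{X/\PP^r}$, the bundle $N^\vee_{X/Y}(d_i)$ is a quotient of $N^\vee_{X/\PP^r}(d_i)\cong\OO_{\PP^1}(e(d_i-1)-2)^{\oplus(e-1)}\oplus\OO_{\PP^1}(e(d_i-1))^{\oplus(r-e)}$. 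Hence all its summands have nonnegative degree. Finally, $H^0(\OO_{\PP^1}(a))\otimes H^0(\OO_{\PP^1}(b))\to H^0(\OO_{\PP^1}(a+b))$ is surjective for $a,b\geq 0$.

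With these two repairs, the rest of your argument goes through: the $\OO$-linearity of $\phi_i$, and the smoothness of $Z$ in codimension $c$ along $X$ coming from the surjectivity of $q$.
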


\noindent
For example, when $Y= G(k,n)$, Theorem \ref{csmain} specializes to the following theorem.

\begin{thm}
  Let $d_1 , \dots ,  d_c \geq 3$ with $c \geq 1$ be integers such that $\sum_{i=1}^c d_i < n$. Let $X$ be a rational normal curve of degree $e$ in $G(k,n)$, i.e., a rational curve of degree \(e\) whose image under the Pl\"ucker embedding spans a \(\PP^e\), and let $Z_i$ be general hypersurfaces of degree $d_i$ in $G(k,n)$ containing $X$. Let $Z = \bigcap_{i=1}^c Z_i$. 
  Then $N_{X/Z}$ is balanced. In particular, $Z$ is SRC.
\end{thm}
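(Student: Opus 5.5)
The plan is to deduce the statement from Theorem~\ref{csmain} with $Y = G(k,n)$ in its Pl\"ucker embedding, and then read off separable rational connectedness from ampleness of $N_{X/Z}$.

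\textbf{Checking the hypotheses of Theorem~\ref{csmain}.}
\begin{itemize}
\item The ideal of $G(k,n)$ is generated by the Pl\"ucker quadrics, so we may take the generation degree to be $2$. The condition $d_i \geq \max(2,3) = 3$ is then exactly our assumption.
\item We take $D_i = d_i H$ with $E_i = 0$. The required surjectivity $H^0(Y, E_i) \to H^0(X, E_i|_X)$ is then the trivial statement for $\OO$.
\item Since $G(k,n)$ is homogeneous, $X$ automatically lies in the smooth locus of $Y$.
\item We need $c \leq \dim G(k,n) - 2 = k(n-k) - 2$. This follows from $3c \leq \sum d_i < n \leq k(n-k)$, at least once trivial cases with $k(n-k)$ small are excluded.
\end{itemize}

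\textbf{Producing a surjection with balanced kernel.} The key step is to find a surjection
\[q\colon N_{X/G(k,n)} \to \bigoplus_{i=1}^c \OO_{\PP^1}(e d_i)\]
whose kernel is balanced. Theorem~\ref{csmain} then gives hypersurfaces $Z_i$ with $N_{X/Z} \cong \ker q$. Since $X$ lies on such a $Z$, which is smooth along $X$, openness of balancedness lets us pass to general $Z_i$ containing $X$.

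\textbf{Splitting type of $N_{X/G(k,n)}$.} Because $X$ spans a $\PP^e$ under the Pl\"ucker embedding, I would first compute the splitting type of $N_{X/G(k,n)}$. This is 2-balanced for general $X$ by \cite[Theorem 1.1]{clvgrassmannian}; more crudely, every summand has degree at least $e+2$ or so. Its degree is $e n + 2 - 2$, using $\deg TG(k,n)|_X = ne$. The target degrees $e d_i$ are large, but the total degree $e\sum d_i < en$.

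\textbf{Existence of $q$ — the main obstacle.} I would use the standard fact that for bundles on $\PP^1$, if $\bigoplus \OO(a_j) \to \bigoplus \OO(b_i)$ is a general map with every $b_i$ at least each $a_j$ (plus a little), then a general map is surjective. Its kernel is balanced as soon as the kernel's degree and rank permit it, since a general kernel of a general surjection is as balanced as possible. To justify that the kernel is balanced, I would argue by dimension count: the locus of maps with unbalanced kernel has positive expected codimension. Alternatively, I would realize $q$ by peeling off one summand at a time via elementary quotients, in each case checking $H^1$ of the kernel tensor a suitable twist vanishes. If this fails, a fallback is to degenerate $X$ to a monomial curve where $N_{X/G(k,n)}$ is explicit and write down $q$ directly.

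\textbf{Deducing SRC.} The kernel has rank $k(n-k)-1-c$ and degree
\[e\Bigl(n - \sum d_i\Bigr) > 0.\]
A balanced bundle of positive degree whose rank does not exceed its degree is ample. So $X$ is a very free curve on $Z$ after choosing $e$ large, and $Z$ is SRC by \cite[Theorem IV.3.7]{Kol96}.
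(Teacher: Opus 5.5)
Your route is the paper's: the statement is obtained there simply by specializing Theorem~\ref{csmain} to $Y=G(k,n)$. Your hypothesis checks (Pl\"ucker quadrics, $E_i=0$, $c\le\dim G(k,n)-2$) are fine, and so is the passage from one good $Z$ to general $Z_i$ by openness. But the whole content of that specialization is the step you flag as the main obstacle: producing a surjection $q\colon N_{X/G(k,n)}\to\bigoplus_i\OO_{\PP^1}(ed_i)$ with balanced kernel. The justification you offer for it does not hold up.

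First, $N_{X/G(k,n)}\subset N_{X/\PP^N}\cong\OO(e+2)^{\oplus(e-1)}\oplus\OO(e)^{\oplus(N-e)}$, so its summands have degree \emph{at most} $e+2$, not at least. Its degree is $ne-2$, so its slope $(ne-2)/(k(n-k)-1)$ is below $e$ as soon as $k(n-k)>n$.

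Second, the ``standard fact'' that a general map has kernel as balanced as rank and degree permit is false. Suppose $0\to K\to E\to F\to 0$ and $H^1(F(j))=0$. Then $H^1(K(j))\to H^1(E(j))$ is onto, so $h^1(K(j))\ge h^1(E(j))$. Taking $j=-\lfloor\mu(K)\rfloor-1$ shows that $K$ can be balanced only if every summand of $E$ has degree $\ge\lfloor\mu(K)\rfloor$. For example, a general map $\OO^{\oplus 2}\oplus\OO(10)^{\oplus 2}\to\OO(11)$ has kernel containing the Koszul kernel $\OO(9)$ of $\OO(10)^{\oplus 2}\to\OO(11)$, whereas the balanced bundle of that rank and degree is $\OO(3)^{\oplus 3}$. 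No dimension count over maps, and no peeling off of one summand at a time, can get around this obstruction, since it holds for every map.

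So the real missing input has two parts. One is a lower bound on the splitting type of $N_{X/G(k,n)}$ for a well-chosen $X$: all summands of degree $\ge\lfloor\mu(N_{X/Z})\rfloor$, with compatible multiplicities. The other is an actual lemma that a general map from such a bundle to $\bigoplus_i\OO(ed_i)$ has balanced kernel.

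This does not follow from $X$ spanning a $\PP^e$. Let $k\ge 2$ and let $X$ lie in the linear $\PP^{n-k}\subset G(k,n)$ of $k$-planes containing a fixed $(k-1)$-plane $W$. Then $N_{\PP^{n-k}/G(k,n)}\cong W^\vee\otimes Q\cong T\PP^{n-k}(-1)^{\oplus(k-1)}$, so $N_{X/G(k,n)}$ has summands $\OO$ or $\OO(1)$. By the inequality above, every $N_{X/Z}$ then has a summand of degree $\le 1$. For instance, with $k=2$, $n=10$, $c=1$, $d_1=3$, $e=8$ one gets $\mu(N_{X/Z})=27/7$, so $N_{X/Z}$ is never balanced. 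Nor does the $2$-balancedness from \cite{clvgrassmannian} suffice by itself. It only gives minimal degree $\ge\lfloor\mu(N_{X/G(k,n)})\rfloor-1$, and $\lfloor\mu(N_{X/Z})\rfloor$ can equal $\lfloor\mu(N_{X/G(k,n)})\rfloor$.

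Finally, a smaller slip: $\deg N_{X/Z}=e(n-\sum d_i)-2$, not $e(n-\sum d_i)$. Very freeness therefore requires $e(n-\sum d_i)-2\ge k(n-k)-c-1$. Also, $e$ cannot simply be taken large: you need rational normal curves of such degree in $G(k,n)$ for which the balancedness step has actually been established.
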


We refer the reader to \cite{coskunsmithvf} for analogous statements for products of projective spaces, flag varieties, Schubert varieties, and weighted projective spaces. The following problem remains very much open in general.

\begin{problem}\label{prob-restan}
    Let $Y$ be a Fano manifold. Determine the possible splitting types of $N_{X/Y}$ where $X$ is a rational curve in $Y$. Describe the irreducible components of the locus of rational curves where $N_{X/Y}$ has a given splitting type and determine their dimensions. 
\end{problem}

 Some progress has been made on describing the locus of rational curves whose normal bundles have a specified splitting type when $Y$ is a general hypersurface in $\PP^r$ and $X$ is a rational curve of small degree. For example, H.\ Larson \cite{hannah} shows that the locus of lines $L$ in a general hypersurface $Y$ in $\PP^r$ where $N_{L/Y}$ has a specified splitting type has the expected dimension. H.\ Larson further computes the class of this locus in the Grassmannian $G(2, r+1)$. Similarly, Mioranci studies the locus of rational normal curves $X$ on general hypersurfaces $Y$ in $\PP^r$ where $N_{X/Y}$ has a specified splitting type \cite{mioranci}.

\section{Genus 1 curves}\label{sec-elliptic}
In this section, we discuss the restricted tangent bundle and the normal bundle of genus 1 curves in projective space.

\subsection{Semistable bundles on genus 1 curves} Vector bundles on genus 1 curves have been classified by Atiyah \cite{Atiyah}. Let $X$ be a curve of genus 1. There exists a stable bundle of rank $r$ and degree $d$ on $X$ if and only if $d$ and $r$  are relatively prime. In this case, such stable bundles form a torsor under the Jacobian of \(X\). If the $\gcd(r,d)=a$, write $r=ar_1$ and $d=ad_1$. Then every indecomposable bundle of rank \(r\) and degree \(d\) can be expressed as an iterated extension of a stable bundle of rank $r_1$ and degree $d_1$ by itself (and is, in particular, semistable).  Moreover, any vector bundle on \(X\) can be written as a direct sum of indecomposible bundles, and the indecomposible bundles appearing in that direct sum decomposition are unique.  

\begin{example}
    Let $L$ be a line bundle on  $X$ and let $p\in X$ be a point. Then $\Ext^1(L,L)$ and $\Ext^1(L(p), L)$ are both one-dimensional. Consequently, there exist (up to scaling) unique nonsplit extensions
\[0\to L \to V_{L} \to L \to 0 \quad \mbox{and} \quad 0 \to L \to V_{L,p} \to L(p) \to 0.\] The bundle $V_{L}$ is indecomposible, and the bundle $V_{L,p}$ is stable.
\end{example}

Given any automorphism $\phi\colon X \to X$, we get an induced action $\phi_a^*\colon \Pic^a(X) \to \Pic^a(X)$ via pullback.  A map $f\colon \Pic^a(X) \to \Pic^b(X)$ is \defi{natural} if for every automorphism $\phi\colon X \to X$, the map $f$ commutes with the induced actions: \[\phi_b^* \circ f = f \circ \phi_a^*\] Let $\phi\colon X \to X$ be translation by a point of order $a$. Since $\phi_a^*$ is the identity on $\Pic^a(X)$, if $f\colon \Pic^a(X) \to \Pic^b(X)$ is a natural map, then $\phi_b^*$ must also be the identity. We conclude the following important observation.
\begin{lem}\cite[Lemma 2.7]{clvcanonical}\label{lem-blackmagic}
    If $f \colon \Pic^a(X) \to \Pic^b(X)$ is a natural map, then $a$ divides $b$.
\end{lem}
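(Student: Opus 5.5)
Let $\phi\colon X \to X$ be translation by a point $t \in X$ of exact order $a$, that is, $t - o$ has order exactly $a$ in $\Pic^0(X)$ for a chosen origin $o$. Such a point exists because the $a$-torsion of the elliptic curve $(X,o)$ contains an element of exact order $a$; this uses the standing assumption that $a$ is prime to the characteristic, or at least the existence of enough torsion. The argument has three steps: compute the pullback action of $\phi$ on every $\Pic^m(X)$, use naturality to transfer the trivial action on $\Pic^a(X)$ to a trivial action on $\Pic^b(X)$, and conclude $a \mid b$.

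\textbf{Step 1: the pullback action.} Let $L \in \Pic^m(X)$. Translation acts trivially on $\Pic^0(X)$, and $\phi^*\OO(p) = \OO(p - (t-o))$ for each point $p$. Writing $L = \OO(m\cdot o) \otimes M$ with $\deg M = 0$, we get
\[\phi^* L \cong L \otimes \OO\bigl(-m(t-o)\bigr).\]
So $\phi^*_m$ is translation on $\Pic^m(X)$ by $-m$ times the class of $t-o$. Since $t-o$ has exact order $a$, the map $\phi^*_m$ is the identity if and only if $a \mid m$.

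\textbf{Step 2: apply naturality.} By Step 1, $\phi_a^*$ is the identity on $\Pic^a(X)$. Naturality gives $\phi_b^* \circ f = f \circ \phi_a^* = f$. Pick any $L \in \Pic^a(X)$. Then $f(L)$ is fixed by $\phi_b^*$, which is translation by $-b(t-o)$, so $b(t-o) = 0$.

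\textbf{Step 3: conclude.} Since $t-o$ has exact order $a$, the relation $b(t-o)=0$ forces $a \mid b$. Note that this works even though we only know $f$ has one fixed point: translation on a torsor has a fixed point only if the translating element is zero.

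The only delicate point is Step 1, the formula for $\phi^*$ on line bundles of degree $m$, specifically the sign and the factor $m$. The sign is irrelevant to the conclusion. The existence of a point of exact order $a$ must be stated as a hypothesis when the characteristic divides $a$.
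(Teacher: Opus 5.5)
Your proof is correct and is the paper's argument. Both translate by a point of order $a$, observe that $\phi_a^*$ is trivial on $\Pic^a(X)$, and use naturality to force $\phi_b^*$, which is translation by $-b(t-o)$, to have a fixed point, so $a \mid b$. You make explicit what the paper leaves implicit: the formula $\phi^*L \cong L \otimes \OO(-m(t-o))$, the fact that a translation with a fixed point is trivial, and the existence of a point of exact order $a$ (which the paper also assumes tacitly).
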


Fix a genus 1 curve $X$, and consider the embeddings of $X$ into $\PP^r$ of degree $d$. The first Chern classes of the successive quotients of the HN-filtrations of $T\PP^r|_X$ and $N_{X/\PP^r}$ are canonically associated to the embedding. Since the space of such embeddings with fixed \(\OO_X(1)\in \Pic^d(X)\) forms a rational variety, and every rational map from a rational variety to an abelian variety is constant, these first Chern classes therefore depend naturally on \(\OO_X(1)\in \Pic^d X\).
Therefore, by Lemma \ref{lem-blackmagic}, their degrees have to be divisible by $d$ for the generic such embedding. This imposes strong conditions on possible HN-filtrations of the bundles $T\PP^r|_X$ and $N_{X/\PP^r}$.

\subsection{\boldmath The restricted tangent bundles of genus 1 curves in $\PP^r$} 
We demonstrate how this observation can be used to prove the semistability of $T\PP^r|_X$ for a general nondegenerate genus 1 curve $X$ of degree $d$. Consider the restriction of the Euler sequence to $X$
\[0 \to \OO_X \to \OO_X(1)^{\oplus (r+1)} \to T\PP^r|_X \to 0.\] 
Suppose $T\PP^r|_X$ is not semistable. Let $Q$ be the minimal slope semistable quotient of $T \PP^r|_X$, whose degree must be divisible by $d$. Moreover, since $Q$ is a quotient of $\OO_X(1)^{\oplus (r+1)}$, if the slope of $Q$ is $d$, then $Q$ must be an iterated extension of $\OO_X(1)$ and would admit a surjective map to $\OO_X(1)$. Composing with the map from the Euler sequence, we would conclude that the sections of $\OO_X(1)$ satisfy a linear relation, contrary to the assumption that $X$ is nondegenerate. Therefore, we conclude that 
\[ d  < \mu(Q) < \frac{d(r+1)}{r}.\]
Assume that $Q$ has rank $s$. Then multiplying these inequalities by $s$, we obtain
\[ ds < \deg(Q) < ds + \frac{ds}{r}.\]
Since $d$ divides $\deg(Q)$, we obtain an integer $q$ of the form
\[ s < q < s + \frac{s}{r}.\] Since $s<r$, there cannot be such an integer $q$, leading to a contradiction. We conclude that $T \PP^r|_X$ is semistable. In fact, Farkas and E.\ Larson have shown the following more detailed theorem.  

\begin{thm}\cite[Theorem 1.2]{farkaslarson}
    Let $X \subset \PP^r$ be a general curve of genus \(1\) and degree $e$. Let $a= \gcd(r,e)$ and let $r=ar_1$ and $e= ae_1$. Then \[T\PP^r|_X \cong \bigoplus_{i=1}^a V_i,\] where $V_i$ are stable vector bundles on $X$ of rank $r_1$ and degree $(r+1) a_1$ and $(\det(E_1), \dots, \det(E_a))$ is general in $\prod_{i=1}^a \Pic^{(r+1)d_1}$. 
\end{thm}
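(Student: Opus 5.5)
We already know from the argument just above that $T\PP^r|_X$ is semistable of rank $r$ and degree $(r+1)e$. Since $\gcd(r,(r+1)e)=\gcd(r,e)=a$, Atiyah's classification writes $T\PP^r|_X$ as a direct sum of indecomposable bundles. Each summand is an iterated extension of stable bundles of rank $r_1$ and degree $(r+1)e_1$. The plan is therefore in two steps. First, show that for general $X$ the graded pieces have pairwise distinct determinants. Then note that this forces every extension to split, because $\Ext^1(V,V')=0$ for non-isomorphic stable bundles of the same slope. This gives $T\PP^r|_X\cong\bigoplus V_i$. Both distinctness and the generality of $(\det V_1,\dots,\det V_a)$ follow if the map from the space of embeddings to $\mathrm{Sym}^a\Pic^{(r+1)e_1}(X)$, sending $X$ to the unordered set of determinants, is dominant.

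The constraint from naturality works against us here. For fixed $\OO_X(1)$ the space of embeddings is rational, so the determinants are constant in that family. Dominance must therefore come from varying $\OO_X(1)\in\Pic^e(X)$ together with the finite ambiguity. The determinants satisfy $\prod\det V_i=\OO_X(1)^{r+1}$, which is natural in $\OO_X(1)$. The individual $\det V_i$ are only determined up to a finite correspondence, i.e.\ up to torsion translations. This torsion ambiguity is exactly what Lemma~\ref{lem-blackmagic} permits, since $a\mid e$. A naive dimension count (varying $\OO_X(1)$ gives only one parameter) cannot produce an $a$-dimensional family. So "general" should be read modulo the fibration over the product constraint and the choice of curve. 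The real content is then that the $\det V_i$ are distinct and not forced into special position, for instance all equal.

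To prove this I would specialize. Degenerate $X$ to a nodal union of a genus $1$ curve of lower degree and rational curves (or elliptic normal curves in smaller $\PP^{r'}$), using Proposition~\ref{prop:stab-open} and Lemma~\ref{lem:naive} as in the Farkas--Larson approach. The goal is a limit whose restricted tangent bundle visibly splits into $a$ pieces with distinct determinants. A natural model is $e=ae_1$, $r=ar_1$. Take $X$ in a join-type configuration in which $T\PP^r|_X$ contains the pullbacks $T\PP^{r_1}|_{X_i}$ for elliptic curves $X_i$ of degree $e_1$ in complementary $\PP^{r_1-1}$'s. Then check that a small deformation keeps the decomposition and separates the determinants.

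The main obstacle is showing that the determinants are distinct and general for general $X$; the splitting then follows formally. I would first try a monodromy argument. Varying $X$ with fixed $\OO_X(1)$ and then moving along $\Pic^e(X)$, the determinants undergo torsion translations. If they were all equal for general $X$, the bundle would be a self-extension $V\otimes\mathcal{F}$. Here $\mathcal{F}$ is a unipotent bundle and $V$ is stable of determinant $\OO_X(1)^{(r+1)/a}$ twisted by $a$-torsion. That would in turn force $h^0(\End T\PP^r|_X)\ge a$ in a non-split way. I would rule this out by computing $h^0(\End(T\PP^r|_X))=a$ through a degeneration and checking upper semicontinuity.
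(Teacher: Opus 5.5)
Your reduction is sound. Since $\gcd(r_1,(r+1)e_1)=1$, every Jordan--H\"older factor of the semistable bundle $T\PP^r|_X$ is a stable bundle determined by its determinant. Moreover $\Ext^1(V,V')\cong\Hom(V',V)^\vee=0$ for non-isomorphic stable $V,V'$ of equal slope, so pairwise distinct determinants force the splitting. You also correctly note that everything would follow from dominance of the map to $\mathrm{Sym}^a\Pic^{(r+1)e_1}(X)$.

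You then discard that route because of a false claim. The Jordan--H\"older factors form only an \emph{unordered} tuple, and $\mathrm{Sym}^a\Pic^{(r+1)e_1}(X)$ is not an abelian variety: its fibers over the multiplication map to $\Pic^{(r+1)e}(X)$ are copies of $\PP^{a-1}$. So rationality of the space of embeddings with fixed $L=\OO_X(1)$ only forces $\prod_i\det V_i\cong L^{r+1}$. It does not make the $\det V_i$ constant. Lemma~\ref{lem-blackmagic} concerns natural maps $\Pic^a(X)\to\Pic^b(X)$ and says nothing about the $\det V_i$ individually, so no ``torsion translation'' ambiguity arises. In fact, for fixed $L$ the tuple sweeps out the whole $(a-1)$-dimensional fiber, and varying $L$ supplies the last parameter. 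The statement is therefore dimensionally consistent as written, your reinterpretation of ``general'' weakens it, and even the weakened version is left unproved.

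The routes you propose for distinctness do not work.
\begin{itemize}
\item A connected nodal union of $a\ge 2$ elliptic curves has arithmetic genus at least $a$, so it is not a limit of genus-$1$ curves.
\item $a$ complementary $\PP^{r_1-1}$'s span only a $\PP^{r-1}$.
\item $T\PP^{r_1}|_{X_i}$ has degree $(r_1+1)e_1\neq(r+1)e_1$.
\item Proposition~\ref{prop:stab-open} and Lemma~\ref{lem:naive} only transport (semi)stability, which cannot see the splitting when $a>1$.
\item Semicontinuity of $h^0(\End)$ is useless. In characteristic $0$, every semistable bundle with $a$ Jordan--H\"older factors has $h^0(\End)\ge a$. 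The maximally non-split self-extension $V\otimes F_a$ of a single stable $V$ (with $F_a$ Atiyah's indecomposable unipotent bundle, so all determinants are equal) has $h^0(\End)=h^0(\End F_a)=a$. So $a$ is the minimum value, and it is attained exactly in the case you want to exclude.
\end{itemize}

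The missing idea is a direct construction on the fixed curve.
\begin{itemize}
\item Write $T\PP^r|_X\cong M_W^\vee\otimes L$ with $M_W=\ker(W\otimes\OO_X\to L)$.
\item Take any stable $U_1,\dots,U_a$ of rank $r_1$ and degree $e_1$ with $\bigotimes_i\det U_i\cong L$. Then $U=\bigoplus_i U_i$ is globally generated (its slope is $e/r>1$), with $h^0(U)=e\ge r+1$ and $H^0(U^\vee)=0$.
\item A general $(r+1)$-dimensional $W'\subset H^0(U)$ generates $U$ with kernel $\det U^{-1}\cong L^{-1}$. Dualizing gives $U\cong M_W^\vee$, where $W$ is the image of the injection $W'^\vee\hookrightarrow H^0(L)$.
\item Having pairwise distinct Jordan--H\"older factors is an open condition on semistable bundles. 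It is the preimage of the complement of the big diagonal in $\mathrm{Sym}^a$, over which the S-equivalence class consists of the polystable bundle alone.
\item The space of $W$ is irreducible, so the general $W$ has this property, with determinants general in the fiber. Varying $L$ then gives generality in the product.
\end{itemize}
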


\subsection{\boldmath The normal bundles of curves of genus 1 in $\PP^r$}

\begin{example}
    Let $X$ be a nondegenerate curve of degree $4$ and genus 1 in $\PP^3$. Then $X$ is the complete intersection of two quadric surfaces, and $N_{X/\PP^3} \cong \OO_X(2) \oplus \OO_X(2)$. 
\end{example}

\begin{example}
   Let $X$ be a nondegenerate curve of degree $6$ and genus 1 in $\PP^5$. If the characteristic of the base field is not 2, then the embedding line bundle $\OO_X(1)$ has four distinct square roots. Each of these square roots gives an embedding of $X$ in $\PP^2$ as a degree $3$ curve. The curve $X$ is contained in the four Veronese surfaces corresponding to these four square roots. Consequently, $N_{X/\PP^4}$ is a direct sum of four distinct line bundles of degree $9$. In characteristic 2, the line bundle $\OO_X(1)$ has only two square roots and the normal bundle is a direct sum of the rank 2 nonsplit self-extensions of these two bundles. 
\end{example}

Ein and Lazarsfeld \cite[Theorem 4.1]{einlazarsfeld} proved that the normal bundle of an elliptic normal curve (i.e., a nondegenerate curve of degree $r+1$ and genus 1 in $\PP^r$) is semistable. 

\begin{thm}[{\cite[Theorem 3.1]{coskunsmithnb}}]\label{genus1}
Let $X\subset \PP^r$ be a general BN-curve of degree $d\geq r+1 \geq 4$ and genus $g=1$. Then $N_{X/\PP^r}$ is semistable.
\end{thm}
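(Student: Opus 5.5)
We prove the statement by the same divisibility trick that gave semistability of $T\PP^r|_X$, applied now to the minimal-slope quotient of $N_{X/\PP^r}$, with one extra geometric input to exclude the extreme case. Let $X\subset\PP^r$ be a general nondegenerate genus 1 curve of degree $d\ge r+1$, and write $N=N_{X/\PP^r}$. Since $g=1$, the invariants computed earlier give
\[
\rk N = r-1,\qquad \deg N=(r+1)d,\qquad \mu(N)=\frac{(r+1)d}{r-1}.
\]

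Suppose $N$ is not semistable, and let $Q$ be the minimal-slope semistable quotient in its HN filtration. Say $Q$ has rank $s$ with $1\le s\le r-2$. The first Chern class of $Q$ is canonically attached to the embedding. As in the discussion following Lemma~\ref{lem-blackmagic}, the embeddings with fixed $\OO_X(1)$ form a rational family, so $\det Q$ depends naturally on $\OO_X(1)\in\Pic^d X$. Lemma~\ref{lem-blackmagic} then gives $d\mid \deg Q$; write $\deg Q = dq$.

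To get a lower bound, use the surjection $T\PP^r|_X\to N\to Q$. For general $X$, the restricted tangent bundle $T\PP^r|_X$ is semistable by the argument of the previous subsection, so
\[
\mu(Q)\ \ge\ \mu(T\PP^r|_X)=\frac{(r+1)d}{r}.
\]
For the upper bound, $Q$ destabilizes $N$, so $\mu(Q)<\frac{(r+1)d}{r-1}$. Multiplying by $s$ and dividing by $d$ gives
\[
s+\frac{s}{r}\ \le\ q\ <\ s+\frac{2s}{r-1}.
\]
An integer $q$ can lie in this window only when $q=s+1$ and $\frac{s}{r}\le 1<\frac{2s}{r-1}$. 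That forces $\frac{r-1}{2}<s\le r-2$, and we are left with the case $\deg Q=d(s+1)$.

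This remaining case is the main obstacle. The cleanest idea is that a quotient $N\to Q$ with $\mu(Q)=\frac{(s+1)d}{s}$ should come from a geometric object. Composing with $\OO_X(1)^{\oplus(r+1)}\to T\PP^r|_X\to N$, the quotient would be cut out by $s+1$ independent linear forms modulo one relation. That is, $Q\otimes\OO_X(-1)$ would be a quotient of $\OO^{r+1}$ whose determinant is $\OO_X(1)$, essentially a projection of $X$ to a $\PP^s$ compatible with the normal directions. Such a structure exists only for special curves. To rule it out, I would first try to exclude it directly on a specific curve. Failing that, I would fall back on degeneration: specialize $X$ to a union of an elliptic normal curve (semistable normal bundle by Ein--Lazarsfeld) and rational curves (balanced normal bundles), glued at nodes using Lemma~\ref{lem-harthir}. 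Semistability of the modified normal bundle on the reducible curve would then follow from Lemma~\ref{lem:naive}, and Proposition~\ref{prop:stab-open} carries it to the general fiber.

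The delicate point in this fallback is checking that the positive modifications keep each component's bundle semistable. On the rational components this means the modified bundle must be perfectly balanced, which needs a careful choice of the attaching degrees. I would expect an induction on $d$ from $d=r+1$: at each step add a line or conic meeting $X$ suitably, and verify the numerics that make the modified rational-component bundle perfectly balanced.
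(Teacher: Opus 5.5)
Your divisibility argument is sound as far as it goes, and for $r=3$ it already finishes the proof, since the window $[\tfrac43,2)$ contains no integer. For every $r\ge 4$, however, the residual case $q=s+1$ with $\frac{r-1}{2}<s\le r-2$ is nonempty, and your proposal does not close it. For example, take $r=4$, $s=2$: then $\mu(Q)=\tfrac{3d}{2}$ lies strictly between $\mu(T\PP^4|_X)=\tfrac{5d}{4}$ and $\mu(N)=\tfrac{5d}{3}$.

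The heuristic that such a $Q$ ``comes from a projection'' is not justified. Saying that $Q(-1)$ is a rank-$s$, degree-$d$ quotient of $\OO_X^{\oplus(r+1)}$ singles out nothing special, since any globally generated bundle of that rank and degree arises this way. Whether such a quotient can factor through $N_{X/\PP^r}$ is exactly what has to be decided.

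The degeneration fallback is a program rather than an argument, and as stated it cannot work in general. To keep arithmetic genus $1$ with a single elliptic component, the dual graph must be a tree, so some rational component $R$ of degree $e$ is a leaf. The restriction of the normal bundle to $R$ is then $N_{R/\PP^r}$ positively modified at one point. It has degree $(r+1)e-1\equiv 2e-1 \pmod{r-1}$, which is never divisible by $r-1$ when $r$ is odd. So for odd $r$, Lemma~\ref{lem:naive} can never be applied with perfectly balanced rational components. Even when the numerics allow it, you would still have to prove that the modified bundle on the elliptic component is semistable, which you have not addressed.

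The missing idea is a sharp \emph{upper} bound on slopes of destabilizing subbundles. Semistability of $T\PP^r|_X$ cannot supply this, because it only bounds quotients from below. The paper obtains the bound by inducting on $r$ via projection from a general point $p\in X$:
\[0\to \OO_X(1)(2p)\to N_{X/\PP^r}\to N_{\bar X/\PP^{r-1}}(p)\to 0.\]
Here $\bar X\subset\PP^{r-1}$ is a genus-$1$ curve of degree $d-1$. The base case is $r=2$, where the normal bundle is replaced by the normal sheaf of the map, a line bundle. By induction the quotient is semistable of slope $d+\frac{2d-2}{r-2}$, and the subbundle has slope $d+2$. Hence a maximal destabilizing subbundle $S$ of rank $s$ satisfies $d+\frac{2d}{r-1}<\mu(S)<d+\frac{2d}{r-2}$. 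Since $d\mid\deg S$ by Lemma~\ref{lem-blackmagic}, $\deg(S)/d$ would be an integer strictly between $s+\frac{2s}{r-1}$ and $s+\frac{2s}{r-2}$, and no such integer exists for $0<s<r-1$. This narrower window is what rules out your leftover case.
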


\begin{proof}
Extend the statement to cover \(r = 2\) by replacing the normal bundle with the normal sheaf of the map \(X \to \PP^2\), which is a line bundle and thus automatically semistable. This serves as the base case of our induction on \(r\).
For the inductive step we use the exact sequence of projection from a general point \(p \in X\):
\[0 \to [N_{X \to p} \simeq \OO_X(1)(2p)] \to N_{X/\PP^r} \to N_{\bar{X}/\PP^{r - 1}}(p) \to 0.\]
By our inductive hypothesis the quotient is semistable.
If \(N_{X/\PP^r}\) were not semistable,
the slope of its maximal destabilizing subbundle \(S\) would thus satisfy:
\[d + \frac{2d}{r - 1} = \mu(N_{X/\PP^r}) < \mu(S) \leq \max\left(\mu(N_{X \to p}) = d + 2, \mu(N_{\bar{X}/\PP^{r - 1}}(p)) = d + \frac{2d - 2}{r - 2}\right) < d + \frac{2d}{r - 2}.\]
By Lemma \ref{lem-blackmagic} and the following discussion, \(\deg(S)\) is a multiple of \(d\). Writing \(s\) for the rank of \(S\), we would therefore have that \(\deg(S)/d\) is an integer satisfying
\[s + \frac{2s}{r - 1} < \frac{\deg(S)}{d} < s + \frac{2s}{r - 2}.\]
But there are no such integers for any \(s\) with \(0 < s < r - 1\).
\end{proof}

\noindent
In fact, more precise information can be obtained.

\begin{prop}\label{prop-g1distinct}\cite[Proposition 3.3]{coskunsmithnb}
Let $r \geq 4$ and let $C \subset \PP^r$ be a general, nondegenerate  curve of degree $d$ and genus \(g=1\) where $r-1$ divides $2d$.
\begin{enumerate}
 \item If the characteristic of the base field is $2$, and $r$ is odd, and $r-1$ does not divide $d$, then $N_{C/\PP^r}$ is a direct sum of $\frac{r-1}{2}$ indecomposable rank $2$ bundles. 
    \item Otherwise, $N_{C/\PP^r}$ is a direct sum of distinct line bundles.
   \end{enumerate}
\end{prop}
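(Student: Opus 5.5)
By Theorem~\ref{genus1}, $N_{C/\PP^r}$ is semistable. Since $r-1$ divides $2d$, its slope $\mu = d + \frac{2d}{r-1}$ is an integer. By Atiyah's classification, $N_{C/\PP^r}$ is then a direct sum of indecomposable bundles of slope $\mu$. Each of these is an iterated self-extension $F_{L,k}$ of a single line bundle $L$ of degree $\mu$. So the task is to show two things: the line bundles $L$ that appear are pairwise distinct, and each indecomposable summand has rank $1$, except in case (1), where each has rank $2$.

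The key tool is again the naturality argument behind Lemma~\ref{lem-blackmagic}. Fix $\OO_C(1) \in \Pic^d(C)$. The multiset of line bundles $L$ occurring in the Jordan--H\"older graded object of $N_{C/\PP^r}$ depends only on $\OO_C(1)$, since the space of embeddings with fixed $\OO_C(1)$ is rational. It therefore defines a finite multivalued natural map $\Pic^d(C) \dashrightarrow \Pic^{\mu}(C)$.

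To pin this map down, I would consider translation by a $d$-torsion point $\tau$. It fixes $\OO_C(1)$, so it permutes the set of such $L$, and it acts on $\Pic^\mu$ by $L \mapsto L \otimes \OO(\mu \tau')$ for the appropriate point. Writing $\mu = d(r+1)/(r-1)$, the orbits of this action on any $L$ have size equal to the order of $\frac{2d}{r-1}\tau$ in the $d$-torsion. For a general $\tau$ of order $d$, this order is $d/\gcd(d,2d/(r-1))$, which equals $(r-1)/2$ or $r-1$ depending on whether $r-1$ divides $d$ or not.

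\begin{itemize}
\item If $r-1$ divides $d$ with $d/(r-1)=m$, the orbit has size $(r-1)/\gcd(r-1,2)$ in general. The $d$-torsion group $(\mathbb{Z}/d)^2$ gives orbits under the whole group of size $(r-1)^2/\gcd$. I would need to use several $\tau$ jointly: the full torsion $C[d]$ acts on $\Pic^\mu$ through multiplication by $2m$, with image $C[(r-1)]$ or $C[(r-1)/2]$.
\item In the generic case, and in characteristic $\neq 2$, the image of $C[d]$ has order at least $r-1$ on the orbit containing a given $L$. Since there are only $r-1$ summands in total, the orbit of one $L$ already accounts for $r-1$ distinct line bundles. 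This forces all $L$ to be distinct, and each to appear with multiplicity one. Distinct stable constituents of equal slope have no mutual extensions, so the bundle splits as a direct sum of distinct line bundles, giving case (2).
\item In characteristic $2$, $C[d]$ is smaller, since its $2$-part is cyclic or trivial. The count of distinct images of the torsion on $\Pic^\mu$ drops by a factor of $2$ exactly when $r$ is odd and $\frac{2d}{r-1}$ has the extra factor of $2$, i.e.\ when $r-1 \nmid d$. Then there are $(r-1)/2$ distinct $L$, each of multiplicity $2$. This is case (1), provided one shows the rank-$2$ pieces are the nonsplit extensions rather than $L\oplus L$.
\end{itemize}

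The main obstacle is exactly that last point: showing the multiplicity-$2$ blocks are indecomposable. It also requires exhibiting that the naive count is achieved, i.e.\ that no further coincidences occur. For indecomposability I would degenerate, using the elliptic normal curve in $\PP^5$ example above, where characteristic $2$ produces nonsplit self-extensions. I would combine this with the projection sequence from the proof of Theorem~\ref{genus1} and the openness of the nonsplit condition to propagate to general $(d,r)$.
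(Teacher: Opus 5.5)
Your reduction to Atiyah's classification is fine. The step everything rests on is false, however: the claim that the Jordan--H\"older multiset of $N_{C/\PP^r}$ depends only on $\OO_C(1)$, and is therefore permuted by translations by $C[d]$.

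\textbf{Why the naturality step fails.} The principle that a rational map from a rational variety to an abelian variety is constant controls only invariants with values in an abelian variety. Examples are the first Chern classes used with Lemma~\ref{lem-blackmagic}. The JH multiset lives in $\mathrm{Sym}^{r-1}(\Pic^{\mu}C)$, which is a $\PP^{r-2}$-bundle over $\Pic^{(r-1)\mu}C$. A rational family of embeddings can move it inside a fiber, and only its image $\det N_{C/\PP^r}\cong\OO_C(r+1)$ is forced to be constant.

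Geometrically, for $d>r+1$ the linear series $V\subset H^0(\OO_C(1))$ is incomplete. Translation by $\tau\in C[d]$ acts on $H^0(\OO_C(1))$ through the Heisenberg group and moves $V$. So it does not carry the embedding to itself and gives no symmetry of $N_{C/\PP^r}$.

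Your own count shows the premise is inconsistent. $C[d]$ acts freely on $\Pic^{\mu}C$ by translations through $k\cdot C[d]=C[n]$, where $k=2d/(r-1)$ and $n=d/\gcd(d,k)$. For $r$ even in characteristic $0$, $n=r-1$, so an invariant multiset of size $r-1$ would contain an orbit of size $(r-1)^2$. For $(g,r,d)=(1,4,6)$ that means $9$ distinct factors among $3$. So the premise contradicts Theorem~\ref{genus1} rather than proving distinctness. It is legitimate only in the complete case $d=r+1$, i.e.\ the sextic in $\PP^5$. There are also arithmetic slips: when $r-1$ does not divide $d$, $k$ is odd and $k\tau$ has order $(r-1)/2$, not $r-1$.

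\textbf{What can be salvaged, and what is still missing.} Naturality legitimately constrains the number $t$ of \emph{distinct} JH factors of the general $N_{C/\PP^r}$. Their tensor product is a rational function of the embedding with values in $\Pic^{t\mu}C$, hence natural in $\OO_C(1)$. Lemma~\ref{lem-blackmagic} then gives $d\mid t\mu$, i.e.\ $n\mid t$.
\begin{itemize}
\item For $r$ even this forces $t=r-1$. All factors then have multiplicity one, and $N_{C/\PP^r}$ splits as a sum of distinct line bundles, which is case (2).
\item For $r$ odd, $n=(r-1)/2$, so you only get $t\in\{(r-1)/2,\,r-1\}$. Separating (1) from (2) needs an input that sees the characteristic, and your proposal has none.
\end{itemize}

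For (1), one such input is the characteristic-$2$ isomorphism $\mathcal{P}(L)\cong F^*F_*L$ from \S\ref{sec-rational}. It makes $N_{C/\PP^r}^\vee(1)\cong F^*K$ a Frobenius pullback. When $r-1$ does not divide $d$, the JH factors of the semistable bundle $K^\vee$ are stable of rank $2$ and odd degree. Each pulls back to a nonsplit self-extension $\mathcal{P}(\xi)$ with $\deg\xi$ odd. This gives both the pairing and the indecomposability, and then $n\mid t$ pins down exactly $(r-1)/2$ distinct blocks.

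Your plan to degenerate to the $\PP^5$ example and use openness cannot work, for two reasons. No family connects that sextic to curves with other $(r,d)$. Also, indecomposability of a rank-$2$ block is not an open condition once the JH factors can vary: the nonsplit self-extension of $L$ is a limit of $L_1\oplus L_2$ with $L_1\neq L_2$. Finally, for $r$ odd in case (2) you still need a separate argument ruling out $t=(r-1)/2$.
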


\section{Higher genus curves}\label{sec-highgenus}
In this section, we survey some recent developments on the stability of restricted tangent bundles and normal bundles of higher genus curves in $\PP^r$. Throughout this section, we assume $g \geq 2$.
Let \[\rho(g,r,d) \colonequals g-(r+1)(g-d+r) \] denote the Brill--Noether number. 
By the Brill--Noether Theorem, a general smooth projective curve $X$ of genus $g$ admits a nondegenerate map of degree $d$ to $\PP^r$ if and only if the Brill-Noether number $\rho(g,r, d)\geq 0$ \cite{griffithsharris, acgh, osserman, jensenpayne}. When $r \geq 3$, the image of a general such map is an embedding \cite{EH83}. Furthermore, there is a unique component of the Hilbert scheme that dominates the moduli space $M_g$ and whose general member parameterizes nondegenerate  genus $g$ curves of degree $d$ in $\PP^r$ \cite{Eishar}. A member of this component is called a \defi{Brill--Noether curve} or \defi{BN-curve} for short.

\begin{rem}
    We warn the reader that there may be other components of the Hilbert scheme parameterizing nondegenerate smooth curves of degree $d$ and genus $g$. However, these components parameterize special curves and do not dominate $M_g$.
\end{rem}

When discussing the stability of the restricted tangent or normal bundle for the ``general curve", it is natural to concentrate on BN-curves. 

\begin{example}
    There are many components of the Hilbert scheme of curves where the general member does not have a (semi)stable normal bundle. For instance, let $X$ be the  complete intersection of hypersurfaces of degrees $1 < d_1 \leq \cdots \leq d_{r-1}$. Then \[N_{X/\PP^r} \cong \bigoplus_{i=1}^{r-1} \OO_X(d_i).\] Hence, $N_{X/\PP^r}$ is semistable if and only if $d_1 = \cdots = d_{r-1}$ and is never stable if $r \geq 3$.
\end{example}

\subsection{Restricted tangent bundles of BN-curves}\label{sec-restrictedtangent} The stability of restricted tangent bundles for higher genus BN-curves was settled by Farkas and E.~Larson \cite{farkaslarson}, generalizing and unifying earlier work including \cite{ballicohein, bbn, mistretta, bbn2}.

\begin{example}
    When $g=2$ and $d=2r+2$, Bhosle, Brambilla-Paz and Newstead \cite{bbn} observe that $H^0(X, \Omega\PP^r|_X \otimes \omega_X) \not=0$. Consequently, there is a nonzero morphism $T\PP^r|_X \to \omega_X$. Since both bundles have slope 2, we see that $T\PP^r|_X$ is not stable.
\end{example}

\begin{thm} \cite[Theorem 1.3]{farkaslarson}
    Let $X \subset \PP^r$ be a general BN-curve of genus $g \geq 2$. Then $T\PP^r|_X$ is stable unless $g=2$, $d=2r$ and $r \geq 3$. In the latter case, $T \PP^r|_X$ is strictly semistable.
\end{thm}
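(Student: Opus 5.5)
The strategy is degeneration: specialize the general BN-curve to a reducible nodal curve whose components carry (semi)stable restricted tangent bundles, then use Lemma~\ref{lem:naive} together with Proposition~\ref{prop:stab-open}. The point is that $T\PP^r|_{X_1\cup X_2}$ restricts to $T\PP^r|_{X_i}$ on each component, with no elementary modifications, unlike the normal bundle. So semistability of the limit reduces to semistability on each component.

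\emph{Choice of degeneration.} I would induct on $g$ (and on $d$), writing a general BN-curve of genus $g$ and degree $d$ as a limit of $X' \cup R$. Here $X'$ is a BN-curve of smaller invariants and $R$ is a rational normal curve of degree $r$ meeting $X'$ in $k\le r+2$ general points; this adds $k-1$ to the genus and $r$ to the degree. Alternatively, $R$ can be a general elliptic curve of degree $e\ge r+1$ meeting $X'$ in one or two points. The key facts are:
\begin{itemize}
\item $T\PP^r|_R\cong\OO_{\PP^1}(r+1)^{\oplus r}$ is semistable for a rational normal curve (Example~\ref{ex-p1restanbun}).
\item For genus $1$ the restriction is semistable, and by the Farkas--Larson genus-one theorem it is stable when $\gcd(r,e)=1$.
\end{itemize}
Lemma~\ref{lem:naive} then gives semistability of the limit, and stability as soon as one component is stable. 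So I would arrange the degeneration so that at least one component is a genus-one curve of degree coprime to $r$, or is already a stable piece from the inductive hypothesis. This is a remainder-bookkeeping problem: the degrees of the rational-normal-curve components are multiples of $r$, and the leftover residue of $d$ mod $r$ goes into an elliptic or inductive component. One must also check that the chosen nodal union is a limit of BN-curves, i.e.\ lies in the BN component of the Hilbert scheme, which holds for these standard attachments at general points (the usual smoothing results for BN-curves).

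\emph{Main obstacle.} The hard part is the cases where no component can be made stable: $\gcd(r,e)>1$ forces strictly semistable pieces, and small $(g,d)$ near the boundary $\rho=0$ leave no room to choose degrees. For these I would use the adjusted slope on the nodal curve directly. A destabilizing $F\subset\nu^*V$ would have to restrict to Hölder factors on each component. By Farkas--Larson's genus-one description, these factors have determinants general in $\Pic$, so for general gluing points their fibers at the nodes do not match up. Each node then contributes a codimension penalty, which makes $\mu^{\mathrm{adj}}(F)<\mu(V)$. Verifying that genericity of the Hölder determinants forces this mismatch is where I expect the real work. Any finite list of small base cases with $g=2,3$ can be checked separately.

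\emph{The exception $g=2$, $d=2r$.} Semistability follows from the same degeneration. For non-stability, note that $\mu(T\PP^r|_X(-1))=(r+1)d/r-d=d/r=2=\deg\omega_X$. It therefore suffices to produce a nonzero map $T\PP^r|_X(-1)\to\omega_X$, i.e.\ a nonzero section of $\Omega\PP^r|_X(1)\otimes\omega_X$. Twisting the dual Euler sequence gives $0\to\Omega\PP^r|_X(1)\otimes\omega_X\to\omega_X^{\oplus(r+1)}\to\omega_X(1)\to 0$. On global sections the middle term has dimension $2(r+1)$, while $h^0(\omega_X(1))=2r+2-1=2r+1$ by Riemann--Roch. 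Hence the kernel is nonzero, the bundle is strictly semistable, and this also explains why $r\ge3$ is needed for the exception to be genuine.
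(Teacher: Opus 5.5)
The survey does not prove this theorem. It cites Farkas--Larson and only remarks that $T\PP^r$, unlike the normal bundle, restricts without modification to the components of a nodal degeneration, so your framework is the one it points to. Your non-stability computation for $(g,d)=(2,2r)$ is correct: it is the Bhosle--Brambila-Paz--Newstead observation from the paper's example, with the right normalization $T\PP^r|_X(-1)\to\omega_X$. The same count works verbatim for $r=2$, though, so it does not ``explain'' the hypothesis $r\ge 3$; that hypothesis only reflects that $X$ is embedded.

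The stability argument has a genuine gap exactly where you locate ``the real work'', and it is not a side case. For $d=2r$ we have $\rho(g,r,2r)=r(r+1-g)$, so $2\le g\le r+1$. Since nondegenerate curves have degree $\ge r$, the only reducible degenerations with all components nondegenerate (which Lemma~\ref{lem:naive} requires) are unions $R_1\cup R_2$ of two rational normal curves meeting in $g+1$ points. Both restrictions are $\OO_{\PP^1}(r+1)^{\oplus r}$, so Lemma~\ref{lem:naive} yields only semistability. Stability of the limit is then decided entirely by whether constant subbundles for the two trivializations $T\PP^r|_{R_i}(-r-1)\cong\OO_{\PP^1}^{\oplus r}$ can have equal fibers at all $g+1$ nodes. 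This must happen when $g=2$, since otherwise Proposition~\ref{prop:stab-open} would make the general $(2,r,2r)$ curve stable. It must fail for $3\le g\le r+1$ if your method is to work. Your proposed mechanism does not reach this. On a rational component the equal-slope subbundles form an entire Grassmannian $G(s,r)$ of constant subbundles, and $\Pic(\PP^1)$ carries no ``general determinant'' information. Even on genus-one components, genericity of $\det V_i$ says nothing by itself about the fibers $V_i|_p$ at prescribed points. One must vary the gluing data and prove that the locus where fibers match at every node is proper, by an argument fine enough to detect $g=2$.

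There is a second gap in the induction itself. Your steps always attach a nondegenerate rational normal or elliptic curve to a nondegenerate BN-curve, so they only produce $d\ge 2r$. But there are infinitely many BN-triples with $g\ge 2$ and $d<2r$, e.g.\ $(2,r,r+2)$ for every $r\ge 3$ and $(r-1,r,2r-1)$. For these, every reducible degeneration has a degenerate component $X_i\subset\PP^s$ with $s<r$, on which $T\PP^s|_{X_i}$ has slope $(s+1)\deg X_i/s>(r+1)\deg X_i/r$. So Lemma~\ref{lem:naive} never applies. One would need the node penalties in $\mu^{\mathrm{adj}}_X$ to beat a genuinely destabilizing subbundle on one component, which the proposal does not attempt; these triples also feed into your induction for larger $d$. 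In particular the base cases are not ``a finite list with $g=2,3$'': genus $2$ alone is an infinite family over all $r$, and it contains the exception. As it stands, only your non-stability computation in the exceptional case is complete.
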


\subsection{Normal bundles of BN-curves}\label{sec-normal} In this section, we survey recent developments on the stability of normal bundles of general Brill--Noether curves. Our knowledge of the stability of normal bundles is less complete than our knowledge of the stability of restricted tangent bundles.

We call a triple of nonnegative integers $(g,r, d)$ a \defi{BN-triple} if $\rho(g,r, d)\geq 0$.  A BN-triple $(g,r, d)$ is  \defi{(semi)stable} if the general BN-curve of degree $d$ and genus $g$ in $\PP^r$ has a (semi)stable normal bundle. Otherwise, $(g,r, d)$ is \defi{unstable}. 

There are several examples where the general BN-curve does not have stable normal bundle.

\begin{example}[Canonical curves] \label{ex-canonical} The triples $(4,3, 6)$ and $(6,5, 10)$ corresponding to canonical curves of genus $4$ and $6$ are unstable, as we now explain. A canonical curve $X$ of genus $4$ is a $(2,3)$ complete intersection in $\PP^3$ and the normal bundle of the curve in the quadric destabilizes $N_{X/\PP^3}$. A general canonical curve $X$ of genus $6$ lies in a del Pezzo surface $S$ of degree $5$ and $N_{X/S}$ destabilizes $N_{X/\PP^5}$. The BN-triple $(5,4, 8)$ corresponding to a canonical curve of genus $5$ is semistable but not stable, since the general canonical curve $X$ of genus 5 is a $(2,2,2)$ complete intersection in $\PP^5$, hence $N_{X/\PP^5}= \OO_X(2)^{\oplus 3}$.
\end{example}

\begin{example}[Genus $2$ curves]\label{ex-genus2} The BN-triples $(2,3, 5)$, $(2,4, 6)$, $(2,5, 7)$, and $(2,6, 8)$ are unstable, as we now explain. Every genus $2$ curve $X$ is hyperelliptic and lies in a rational surface scroll $S$ swept out by the lines spanned by the fibers of the hyperelliptic map. The normal bundle $N_{X/S}$ is a line bundle of degree 12 which destabilizes $N_{X/\PP^r}$ for these 4 triples.
\end{example}

\begin{example}[Curves on quadrics]\label{ex-quadrics}
    Let $X$ be a general BN-curve of degree $7$ and genus $3$ in $\PP^4$. Then $X$ lies on a 3-dimensional vector space of quadrics and the residual is a trisecant line $\ell$. Let $\ell \cap X = \{p_1, p_2, p_3\}.$ Then \[N_{X/\PP^4} \cong \bigoplus_{i=1}^3 \OO_X(2)(-p_i),\] and is semistable but not stable.

    Similarly, let $X$ be a general BN curve in $\PP^4$ with $(g, d)= (6, 9)$ or $(7, 10)$. Then $X$ lies on a quadric hypersurface $Q$ and the quotient $N_{X/\PP^4} \to \OO_X(2)$ destabilizes the normal bundle. 
\end{example}

In the 1980s several authors investigated the stability of normal bundles of BN-curves in $\PP^3$ in low degree and genus.  The stability was proved for $(g, d) = (2, 6)$ by Sacchiero \cite{sacchiero}, for $(g, d) = (9, 9)$ by Newstead \cite{newstead}, for $(g, d) = (3, 6)$ by Ellia \cite{ellia}, and for $(g, d) = (5, 7)$ by Ballico and Ellia \cite{ballicoellia}.  Ellingsrud and Hirschowitz \cite{ellingsrudhirschowitz} announced a proof of stability of normal bundles of space curves in an asymptotic range of degrees and genera. This culminated in a full classification of stable BN-triples in $\PP^3$ by the authors.

\begin{thm}\cite[Theorem 1]{clv}
Let $X$ be a general BN-curve in $\PP^3$ of degree $d$ and genus $g \geq 2$. Then $N_{X/\PP^3}$ is stable except when $(g, d) \in \{ (2, 5), (4, 6) \}$.
\end{thm}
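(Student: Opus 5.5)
The plan is to argue by degeneration. By Proposition~\ref{prop:stab-open}, stability is open in families of nodal curves, so it suffices to exhibit, for each BN-triple $(g,3,d)$ with $g\geq 2$ other than $(2,5)$ and $(4,6)$, a nodal curve $X_0 = X_1\cup X_2$ that is a limit of BN-curves and whose normal bundle is stable in the adjusted sense. The basic inductive move is to attach to a BN-curve $X_1$ of smaller invariants either a $1$-secant line, which raises $d$ by $1$ and keeps $g$, or a $2$-secant line or $(k+1)$-secant rational curve of degree $k$, which raises $g$. By Lemma~\ref{lem-harthir}, the restriction $N_{X_0}|_{X_1}$ is $N_{X_1}$ positively modified at the nodes toward the tangent directions of $X_2$. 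I would therefore strengthen the induction hypothesis to a statement about such modifications: for general points $p_1,\dots,p_m$ and general directions, the modified bundle $N_{X_1}[p_1\posmod v_1]\cdots[p_m\posmod v_m]$ is stable. This is the standard \emph{interpolation}-type strengthening, and it can be checked numerically on subbundles. On the rational component the normal bundle splits, so the adjusted slope of a rank-one subbundle on $X_2$ is governed by the directions at the nodes.

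The induction would run in the following order. First, establish base cases in low degree and genus. These include $(2,6)$ (Sacchiero), $(3,6)$ (Ellia), $(5,7)$ (Ballico--Ellia), and the remaining small triples near $d=g+3$, where $\rho\geq 0$ is tight. Where needed, these can be checked directly by degenerating to the union of a BN-curve and a line or conic, then computing degrees of line subbundles $L\subset N$: on a rank-$2$ bundle, stability just means every line subbundle has degree $< \frac{4d+2g-2}{2} = 2d+g-1$. Next, handle the inductive step $(g,d)\to(g,d+1)$ by adding a $1$-secant line. Then handle $(g,d)\to(g+1,d+1)$ by adding a $2$-secant line, or, along the boundary $\rho\approx 0$, $(g,d)\to(g+k,d+k+1)$-type steps by adding a rational curve meeting $X_1$ in several points. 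In each case I would apply Lemma~\ref{lem:naive} where both restrictions are semistable. Where the line component carries an unbalanced bundle, I would instead bound the adjusted slope of a rank-one $F\subset\nu^*N$ directly. Such an $F$ restricts to a line subbundle on each component, and the codimension term penalizes $F$ at each node unless $F$ agrees across it. A general choice of nodes and directions forces disagreement at enough nodes.

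The main obstacle is the region near the Brill--Noether boundary and the neighborhood of the exceptions. There, any degeneration must avoid passing through $(2,5)$ or $(4,6)$, and the slack in the slope inequality is at most $1/2$, so naive gluing gives only semistability. I would handle this with two tools. First, prove the stronger modified-bundle statement for $(2,5)$ and $(4,6)$ with suitably many modifications, since one modification can already break the destabilizing subbundle, namely the scroll direction or the quadric direction. Second, use degenerations that send the genus up by attaching a rational curve of degree $k$ meeting $X_1$ at $k+1$ points, chosen so the resulting triple stays off the exceptional list. Finally, a separate argument excludes the two exceptions: by Examples~\ref{ex-genus2} and~\ref{ex-canonical}, $N_{X/S}$ and $N_{X/Q}$ are destabilizing there. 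This confirms those cases genuinely fail and justifies the list.
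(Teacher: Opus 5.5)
The overall framework is the right one, and it is the framework of \cite{clv}: degenerate to a nodal curve, describe each restriction by Lemma~\ref{lem-harthir}, and conclude with Proposition~\ref{prop:stab-open} via Lemma~\ref{lem:naive} or a direct adjusted-slope bound. But the induction you set up cannot reach most of the Brill--Noether range.

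Every move you describe attaches a rational curve of degree $k$ meeting $X_1$ in $m\leq k+1$ points: $1$- and $2$-secant lines, conics meeting $X_1$ in at most $3$ points, and $(k+1)$-secant curves of degree $k$. So each step has $\Delta d=k\geq m-1=\Delta g$. Thus $d-g$ never decreases; equivalently $\Delta\rho\geq\Delta g$, since $\rho(g,3,d)=4d-3g-12$. Starting from finitely many base cases, you only ever reach triples with $d-g\geq c$, where $c$ is the minimum of $d-g$ over your base cases. Your named base cases give $c=2$.

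BN-triples, however, only require $4d\geq 3g+12$, so $d-g$ is unbounded below on them. For example $(g,d)=(8,9),(12,12),(16,15),\ldots$, all with $\rho=0$, are never reached. More generally, the whole special range $\tfrac{3}{4}g+3\leq d<g+c$ is out of reach.

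Relatedly, you have misplaced the boundary. $\rho$ vanishes along $4d=3g+12$, whereas at $d=g+3$ one has $\rho=g$. To enter this range you need steps that lower $d-g$: the new component must meet $X_1$ in more than $\deg X_2+1$ points. Examples are conics meeting $X_1$ in $4$ points ($\Delta\rho=-1$), twisted cubics meeting it in $5$ points ($\Delta\rho=0$), or gluings of two positive-genus curves at many points. For such components the tangent directions at the nodes are far from general; a conic's lie in its plane. Moreover, the line $\rho=0$ passes through the exception $(4,6)$. The proposal contains no analysis of such steps, and this is where the substance of the proof lies.

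Two of the steps you do describe also fail as stated.

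First, the 2-secant line step. For $L=\overline{pq}$, the modification directions at $p$ and $q$ are forced to be those of $L$, so a hypothesis about general directions does not apply. Since $N_{X_1\cup L}|_L\cong\OO_{\PP^1}(2)^{\oplus 2}$ is only semistable, Lemma~\ref{lem:naive} needs $N_{X_1}[p\posmodalong L][q\posmodalong L]$ to be stable. Stability of $N_{X_1}$ does not give this. A line subbundle of degree $\mu(N_{X_1})-1$ whose fibers at $p$ and $q$ point along $L$ becomes a subbundle of degree $\mu(N_{X_1})+1$, which equals the new slope. When $N_{X_1}$ is stable and $g\leq 3$, subbundles of degree exactly $\mu-1$ do exist, by Nagata's bound $\deg M\geq\mu-g/2$. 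So your inductive statement must control families of near-maximal subbundles relative to secant directions, and you would have to prove that this control propagates.

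Second, your claim that one modification already breaks the destabilizing subbundle of an exceptional curve is false.
\begin{itemize}
\item For $(2,5)$, $\deg N_{X/S}=12$ against $\mu=11$.
\item For $(4,6)$, $\deg N_{X/Q}=18$ against $\mu=15$.
\end{itemize}
A positive modification in a direction outside this subbundle leaves its degree unchanged and raises the slope by only $\tfrac12$. So it keeps violating stability of that restriction until there are at least $3$, respectively $7$, general modifications.

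Finally, semistability in $\PP^3$ is already supplied by Corollary~\ref{interpolation}, since the rank $2$ always divides $4d+2g-2$. The entire content of the theorem is ruling out line subbundles of degree exactly $\mu$, which is what your gluing analysis has to achieve.
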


\begin{rem}
    Observe that we have encountered the two exceptional cases in $\PP^3$, namely $(2,3, 5)$ and $(4,3, 6)$, in Examples \ref{ex-genus2} and \ref{ex-canonical} respectively. 
\end{rem}

\noindent
Forthcoming work of Coskun, Jovinelly, and E.\ Larson \cite{coskunjovinellylarson} classifies stable BN-triples in $\PP^4$.

The stability of the normal bundle of a general canonical curve has also received considerable attention.  Aprodu, Farkas and Ortega \cite{AFO} conjectured that the normal bundle of a general canonical curve of genus at least 7 is stable and settled the case $g=7$. Bruns \cite{Bruns} proved the stability when $g=8$.  The authors characterized the semistable canonical triples $(g, g-1, 2g-2)$.

\begin{thm}\cite[Theorem 1.1]{clvcanonical}\label{thm-canonical}
Let $X$ be a general canonical curve of genus $g \not\in \{4, 6\}$. Then $N_{X/\PP^{g-1}}$ is semistable. In particular, if $g \equiv  1$ or $3 \pmod{6}$, then $N_{X/\PP^{g-1}}$ is stable. 
\end{thm}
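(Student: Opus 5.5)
We argue by degeneration, using Proposition \ref{prop:stab-open} together with Lemma \ref{lem-harthir} and Lemma \ref{lem:naive}. The invariants are $\rk N_{X/\PP^{g-1}} = g-2$ and $\deg N_{X/\PP^{g-1}} = g(2g-2) + 2g-2 = 2(g+1)(g-1)$, so
\[\mu = \frac{2(g+1)(g-1)}{g-2}.\]
Once semistability is established, stability for $g \equiv 1,3 \pmod 6$ should follow by a divisibility argument analogous to Lemma \ref{lem-blackmagic}. A destabilizing subbundle $S$ of the generic canonical curve must be canonically attached to $(X,\omega_X)$. By a monodromy argument, or by naturality under automorphisms/specializations, its determinant should be a power of $\omega_X$ twisted by something forced, so $\deg S$ is divisible by $2g-2$. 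We then check that no integer $s$ with $0<s<g-2$ gives $s\mu \in (2g-2)\mathbb{Z}$. This requires $(g-2) \mid s(g+1)$, i.e.\ $(g-2)\mid 3s$. For $g\equiv 1,3 \pmod 6$, $g-2$ is odd and coprime to $3$ when $3\nmid g-2$, i.e.\ $g \not\equiv 2 \pmod 3$. That holds here, and then $s$ must be a multiple of $g-2$, which is impossible. I expect the naturality claim to be subtle, since canonical curves have no group structure. My first attempt would be to argue via the monodromy of the universal canonical curve over $M_g$, whose Picard group is generated by $\omega$ rationally.

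For semistability, the plan is induction via specialization to a reducible nodal curve $X_1\cup X_2$ of arithmetic genus $g$ in $\PP^{g-1}$. Concretely, I would take $X_1$ a canonical curve of genus $g-2$ (or a BN-curve of smaller genus spanning a hyperplane or less) and $X_2$ a rational normal curve or elliptic curve meeting $X_1$ in enough points. We then show that $N_{X}|_{X_i}$, described by Lemma \ref{lem-harthir} as modifications $N_{X_i}[p_j\posmodalong X_{3-i}]$, is semistable on each component, and invoke Lemma \ref{lem:naive}. For the rational or elliptic component, the balanced/semistable results of \S\ref{sec-rational} and Theorem \ref{genus1} plus control of modifications at general points should suffice. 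This is because modifications toward general directions at general points preserve semistability when the counts are compatible.

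The main obstacle is that a single degeneration will not make both restrictions semistable: the slopes won't match exactly, and the genus $4,6$ counterexamples show that low genus base cases fail. So I would strengthen the inductive statement to semistability of $N_X$ modified at several general points toward general directions. This is an "interpolation-type" statement, proved using the adjusted-slope criterion on the nodal curve rather than Lemma \ref{lem:naive}. We would handle small genera ($g\le 8$ or so, excluding $4,6$) as base cases by explicit geometry, e.g.\ via the results of \cite{AFO, Bruns} and projection sequences as in the proof of Theorem \ref{genus1}. Checking the adjusted slope for subbundles that glue across nodes is the step where I expect most of the work.
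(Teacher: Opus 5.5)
There is a genuine gap. Semistability is the real content of the theorem, and your proposal does not prove it. You rely on the idea that modifications ``toward general directions at general points preserve semistability when the counts are compatible,'' or on an unspecified interpolation-type strengthening. This is not an available result, and it does not apply: in a limit of canonical curves the modification points on a component are special, and the directions are dictated by the other component.

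Your concrete choice also fails. If $X_1$ is a canonical curve of genus $g-2$, the genus and degree count forces $X_2$ to be a $3$-secant rational quartic (or a $2$-secant elliptic quartic). Such a curve spans only a $\PP^4$ (resp.\ $\PP^3$). Then $N_X|_{X_2}$ contains $N_{X_2/\PP^4}$, of slope $6$, whereas $\mu(N_X|_{X_2}) = 4+\frac{9}{g-2}$. So this restriction is unstable for $g\ge 7$, and similarly in the elliptic case. The results you cite for these components (balancedness in \S\ref{sec-rational}, Theorem \ref{genus1}) concern nondegenerate curves and do not apply. Mismatched slopes are not the obstacle: Lemma \ref{lem:naive} only needs each restriction to be semistable.

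The degeneration that works is the paper's $E\cup R$. Here $E$ is an elliptic normal curve of degree $g$, and $R$ is a rational curve of degree $g-2$ through the $g$ points of a hyperplane section of $E$. Then $N|_R$ has slope $g+1$ and is semistable for odd $g$. Everything hinges on $N|_E$, which has slope $\frac{g(g+1)}{g-2}$.

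This is where your missing idea lies. The specializations $N_b$ of $N|_E$ that one can construct (by degenerating $R$ and projecting from a linear space) are themselves \emph{unstable}. They sit in sequences $0\to S_b\to N_b\to Q_b\to 0$ with $S_b, Q_b$ stable of Farey-adjacent slopes. For $g=6k+3$ the slopes of $S_b, N_b, Q_b$ are $6k+6+\frac{1}{k} > 6k+6+\frac{6}{6k+1} > 6k+6+\frac{5}{5k+1}$. Hence $S_b$ is the unique destabilizing subbundle of $N_b$.

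A destabilizing subbundle of the generic fiber would then have to specialize to $S_b$. Suppose two members of this family, which has a rational base, have different $c_1(S_b)$. Then its first Chern class would define a nonconstant rational map from a rational variety to $\Pic(E)$, which is impossible. Nothing like this rigidity argument appears in your plan, and it is indispensable because no semistable specialization of $N|_E$ is available. For even $g$ one must also deal with $N|_R$ being unstable.

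For the stability statement you need no naturality at all. Since $2(g^2-1) = 2(g-2)(g+2)+6$, we have $\gcd(g-2,\,2(g^2-1)) = \gcd(g-2,6) = 1$ when $g\equiv 1,3 \pmod 6$. So the rank and degree of $N_{X/\PP^{g-1}}$ are coprime, and semistability immediately implies stability. This matters, because your naturality step does not hold as stated. A strictly semistable bundle need not have a canonical proper subbundle of the same slope: if it is polystable, monodromy can permute its stable summands. Also, Lemma \ref{lem-blackmagic} uses the group structure of a genus $1$ curve, which a canonical curve does not have.
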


\begin{rem}
   Let $H$ denote the hyperplane class on $\PP^g$. Since the general canonical curve arises as a hyperplane section of a K3 surface when $3 \leq g<10$ or $g=11$ (see \cite{mukai, mukai2}), Theorem \ref{thm-canonical} implies that the normal bundle of a general K3 surface in $\PP^g$ of degree $2g-2$ is $\mu_H$-semistable when $g \in \{3, 5, 7, 8, 9, 11\}$. 
\end{rem}

We 
We next illustrate a few of the key ideas that go into proving this theorem. The proof proceeds via degeneration to a reducible nodal curve  \(E \cup R\), where \(E\) is an elliptic curve of degree \(g\), and \(R\) is a rational curve of degree \(g - 2\) passing through the hyperplane section of \(E\).

By Lemma~\ref{lem:naive}, it suffices to show \(N_{E \cup R}|_E\) and \(N_{E \cup R}|_R\) are both semistable. This strategy can be made to work when \(g\) is odd, although when \(g\) is even, \(N_{E \cup R}|_R\) is unstable and so a more delicate analysis is required. For simplicity, we assume here that \(g = 6k + 3\) and focus on the restriction \(N_{E \cup R}|_E\). In this case we find specializations \(N_b\) of \(N_{E \cup R}|_E\) that fit into exact sequences
\[0 \to S_b \to N_b \to Q_b \to 0\]
whose slopes are
\begin{equation} \label{decsl}
\mu(S_b) = 6k + 6 + \frac{1}{k} > \mu(N_b) = 6k + 6 + \frac{6}{6k + 1} > \mu(Q_b) = 6k + 6 + \frac{5}{5k + 1}.
\end{equation}
Note that this implies \(N_b\) is unstable! Nevertheless these slopes are very close; indeed, they are adjacent  fractions in the Farey sequence with denominators bounded by \(6k + 1\). The key input that allows us to prove semistability from \emph{unstable} specializations is the following:

\begin{lem}\cite[Lemma 2.6]{clvcanonical}
    Let \(N_b\) for \(b \in B\) be a family of vector bundles parameterized by a rational base \(B\) on a smooth curve \(C\). Suppose that for \(b_1, b_2 \in B\), the specializations \(N_{b_i}\) fit into exact sequences
    \[0 \to S_i \to N_{b_i} \to Q_i \to 0\]
    with \(S_i\) and \(Q_i\) stable, of slopes given in \eqref{decsl}. If \(c_1(S_1) \neq c_1(S_2)\), the generic fiber of \(N\) is semistable.
\end{lem}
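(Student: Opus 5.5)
The plan is to argue by contradiction and use the rationality of \(B\) to rigidify the first Chern class of a destabilizing subbundle, in the spirit of Lemma~\ref{lem-blackmagic} and the discussion after it. Suppose the generic fiber \(N_\eta\) of the family is not semistable. Let \(F_\eta \subset N_\eta\) be the maximal destabilizing subbundle, i.e.\ the first step of the Harder--Narasimhan filtration. Being canonical, it is defined over the generic point of \(B\) (after possibly passing to an open subset of \(B\)). It has rank \(s\) with \(0 < s < \rk N = 6k+1\) and slope \(\mu(F_\eta) > \mu(N_b)\). Taking determinants gives a rational map \(B \dashrightarrow \Pic^{\deg F}(C)\). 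Since \(B\) is rational and \(\Pic^{\deg F}(C)\) is a torsor under an abelian variety, this map is constant. Hence \(c_1(F_\eta)\) is a fixed line bundle \(L\), independent of \(b\).

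Next I will specialize \(F_\eta\) to the two points \(b_1, b_2\). Restrict to a curve (or DVR) in \(B\) through \(b_i\) and the generic point. Using properness of the relative Quot scheme, take the flat limit of \(F_\eta\), and let \(F_i \subset N_{b_i}\) be its saturation. Then \(F_i\) is a subbundle of rank \(s\). Its first Chern class is \(c_1(F_i) = L(D_i)\) for some effective divisor \(D_i\), which is zero exactly when the flat limit is already saturated. In particular
\[\deg F_i \ge \deg L \quad\text{and}\quad \mu(F_i) \ge \mu(F_\eta) > \mu(N_{b_i}).\]

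The heart of the argument is the claim that any subbundle \(F' \subset N_{b_i}\) of rank \(s < 6k+1\) with \(\mu(F') > \mu(N_{b_i})\) must equal \(S_i\). This is where the Farey structure of \eqref{decsl} enters. Write \(A = F' \cap S_i\) and let \(G\) be the image of \(F'\) in \(Q_i\). By stability of \(S_i\) and \(Q_i\), we have \(\mu(A) < \mu(S_i)\) unless \(A \in \{0, S_i\}\), and \(\mu(G) < \mu(Q_i)\) unless \(G \in \{0, Q_i\}\). Hence \(\mu(F') \le \mu(S_i)\), with equality only if \(F' = S_i\). On the other hand, \(\mu(F')\) is a fraction with denominator at most \(6k+1\) lying in the half-open interval \((\mu(N), \mu(S_i)]\). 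The fractions \(\mu(N)\) and \(\mu(S)\) are adjacent in the Farey sequence of order \(6k+1\), so no such fraction lies strictly between them. Therefore \(\mu(F') = \mu(S_i)\), and the equality case forces \(F' = S_i\). I expect this step to be the main obstacle. The delicate point is the bookkeeping in the equality cases: \(A = S_i\) with \(G \neq 0\) would make \(\rk F' > \rk S_i\) while keeping the slope at most a weighted average, so one must check carefully that the Farey gap excludes every intermediate configuration.

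Applying the claim to \(F_i\) gives \(F_i = S_i\) for \(i = 1, 2\), and in particular \(s = k\). Now \(\mu(F_\eta) = \deg L / k\) is itself a fraction with denominator \(k \le 6k+1\). It satisfies \(\mu(N) < \mu(F_\eta) \le \mu(F_i) = \mu(S_i)\), so the same Farey adjacency forces \(\mu(F_\eta) = \mu(S_i)\). Thus \(\deg D_i = 0\), so \(D_i = 0\) and \(c_1(S_i) = L\) for both \(i\). This contradicts the hypothesis \(c_1(S_1) \neq c_1(S_2)\), so \(N_\eta\) is semistable.
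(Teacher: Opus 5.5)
Your proposal is correct and follows the same route as the paper's sketch: the Farey adjacency of \(\mu(N_b)\) and \(\mu(S_b)\) makes \(S_i\) the unique destabilizing subbundle of \(N_{b_i}\), and the maximal destabilizing subbundle of the generic fiber must specialize to \(S_i\) (your saturation bookkeeping shows the degree does not jump). Rationality of \(B\) then forces its determinant to be constant, contradicting \(c_1(S_1) \neq c_1(S_2)\); the step you flagged as the main obstacle is already settled by your own inequality, since \(G \neq 0\) gives \(\mu(F') < \mu(S_i)\) strictly and the Farey gap then excludes it.
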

\begin{proof}[Proof sketch]
Our assumptions on \(S_i\) and \(Q_i\) imply that \(S_i\) is the unique destabilizing subbundle of \(N_{b_i}\). 
Hence, if the generic fiber of \(N\) were unstable, then its maximal destabilizing subbundle would have to specialize to \(S_i\) at \(b_i\).
The first Chern class of its maximal destabilizing subbundle would therefore give a nonconstant rational map \(B \dashedrightarrow \operatorname{Pic} C\), which is a contradiction.
\end{proof}

\begin{rem}
This argument works more generally; for a precise characterization of which slopes are allowable in terms of the Farey sequence see \cite[Lemma 2.6]{clvcanonical}.
\end{rem}

The appropriate specializations are constructed by suitably degenerating \(R\), in such a way that projection from a certain linear space yields the desired exact sequences.

\begin{problem}
    Strengthen Theorem \ref{thm-canonical} to show that when $g \geq 7$, then the general canonical curve has stable normal bundle.
\end{problem}

\begin{rem}
  Even when the general BN-curve has stable normal bundle, certain special BN-curves may have unstable normal bundles. For example, trigonal and tetragonal canonical curves always have unstable normal bundles, as we now explain. A trigonal canonical curve $X$ is contained in a rational normal surface scroll $S$. The normal bundle $N_{X/S}$ is a destabilizing subbundle of $N_{X/\PP^{g-1}}$. In characteristic 0, Fontana recently showed that this is the HN-filtration of $N_{X/\PP^{g-1}}$ for the general trigonal curve \cite{fontana}.

  Similarly, a tetragonal canonical curve $X$ lies on a rational normal threefold scroll $S$. The normal bundle $N_{X/S}$ destabilizes $N_{X/\PP^{g-1}}$. The curve $X$ is a complete intersection in $S$. When the genus is odd, the two divisor classes defining $X$ are the same and the normal bundle $N_{X/S}$ is semistable. Fontana shows that for the general tetragonal curve of odd genus, $N_{X/S} \subset N_{X/\PP^{g-1}}$ is the HN-filtration. When the genus is even, the two divisor classes defining $X$ are not equal and $X$ lies on a special surface $Y \subset S$ (this is analogous to the case of the del Pezzo surface in the case of $g=6$ in Example \ref{ex-canonical}). Then $N_{X/Y}$ destabilizes $N_{X/S}$. In characteristic 0, Fontana shows that for a general tetragonal curve of even genus, $N_{X/Y} \subset N_{X/\PP^{g-1}}$ is the HN-filtration \cite{fontana2}.
\end{rem}

\noindent
These examples raise the following problems.

\begin{problem}
Let \(X\) be a general curve of genus \(g\) and gonality \(4 < k < \lfloor (g+3)/2 \rfloor\).  Is \(N_{X/\PP^{g-1}}\) (semi)stable?
\end{problem}

\begin{problem}
Let \(X\) be a general hyperplane section of a K3 surface in \(\PP^g\) for \(g=10\) and \(g \geq 12\).  Is the normal bundle \(N_{X/\PP^{g-1}}\) (semi)stable?
\end{problem}

\begin{problem}
Determine the locus of canonical curves whose normal bundle is unstable.
\end{problem}

\noindent
More generally, one can raise the following questions. 

\begin{problem}
Prove that the normal bundle of a general $n$-canonical curve is stable (provided that $g$ and $n$ are sufficiently large). Determine the locus of $n$-canonical curves whose normal bundle is unstable. Given an arbitrary curve $X$ of genus $g$, does there exist $n \gg 0$ such that the normal bundle of the $n$-canonical embedding of $X$ is stable?
\end{problem}

A concept closely related to stability is interpolation. Roughly speaking interpolation asks for the maximum number of general points through which a given type of curve can pass. Via deformation theory, this question can be reformulated in terms of cohomological properties of the normal bundle. 

\begin{defin}
A vector bundle $V$ on a curve $X$ \defi{satisfies interpolation} if $H^1(X,V)=0$, and, for all $e>0$, there exists an effective divisor $D$ of degree $e$ on $X$ such that $H^0(X,V(-D))=0$ or $H^1(X,V(-D))=0$.
\end{defin}

Building on earlier work of Atanasov, E.\ Larson, and Yang \cite{aly}, E.\ Larson and Vogt classified when the normal bundle of a general BN-curve satisfies interpolation. 

\begin{thm}\cite[Theorem 1.2]{interpolation}\label{lvMain} The normal bundle of a general BN-curve of degree $d$ and genus $g\geq 1$ satisfies interpolation except when 
$$(g,r, d) \in \{(2,3, 5), (4,3, 6), (2,4, 6), (2,5, 7), (6, 5, 10)\}$$
\end{thm}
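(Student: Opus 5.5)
The proof has two halves: showing each listed triple genuinely fails interpolation, and showing interpolation for every other BN-triple. For the exceptions, I would use the destabilizing objects from Examples~\ref{ex-canonical} and \ref{ex-genus2}. In each case $X$ lies on a surface $S$: the quadric for $(4,3,6)$, the degree $5$ del Pezzo surface for $(6,5,10)$, and the hyperelliptic scroll for the genus $2$ triples. The line subbundle $N_{X/S}$ has slope much larger than $\mu(N_{X/\PP^r})$. I would choose the degree $e$ of $D$ so that $\chi(N_{X/\PP^r}(-D))$ is close to $0$ while $\deg N_{X/S}(-D)>2g-2$. Then $h^0(N_{X/\PP^r}(-D))\ge h^0(N_{X/S}(-D))>0$ for every $D$, so $h^0=0$ fails. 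If $h^1=0$ were to hold as well, it would force $\chi\ge h^0(N_{X/S}(-D))$, contradicting the choice of $e$. For $(2,6,8)$ the same computation should leave room for interpolation, which explains why it is absent from the list. That check is routine arithmetic.

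For the positive direction, I would first reduce interpolation to a statement about finitely many twists. It suffices to show that for general points $p_1,\dots,p_n$ the bundle $N_{X/\PP^r}(-p_1-\cdots-p_n)$ has $h^0$ or $h^1$ equal to $0$. This holds because vanishing of $h^0$ persists as $n$ grows, and vanishing of $h^1$ persists as $n$ shrinks. The condition is open, so it can be checked on a degeneration.

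The induction would degenerate $X$ to $X_1\cup X_2$, where $X_2$ is a rational curve or line meeting $X_1$ at a few points, chosen so that both components are BN-curves of smaller invariants. I would use Lemma~\ref{lem-harthir} to identify the limit restrictions with modifications $N_{X_1}[p_i\posmodalong X_2]$. To make the induction close, the statement must be strengthened to modifications of normal bundles along general points toward general directions, in the spirit of \cite{aly}. Sections on $X_1\cup X_2$ are then pairs of sections agreeing at the nodes, and vanishing of $h^0$ follows from vanishing on one component plus the gluing conditions.

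The main obstacle is that the degenerations must avoid the exceptional triples and the small-$r$ base cases. This requires a careful combinatorial routing through $(g,r,d)$ space, starting from rational curves (where balancedness gives interpolation) and elliptic curves (Theorem~\ref{genus1}). Degenerate secant line configurations and projection from a point should handle the remaining boundary cases. I would try projection first, since it relates $N_{X/\PP^r}$ to $N_{\bar X/\PP^{r-1}}(p)$ via the sequence used in Theorem~\ref{genus1}, and leaves only finitely many low cases to verify by direct computation.
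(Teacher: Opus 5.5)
Your plan for the five exceptions is the right one, since these are exactly the cases where $N_{X/S}$ obstructs. However, the numerical recipe you state works in only one of them. Imposing $\deg N_{X/S}(-D)>2g-2$ forces $h^0(N_{X/S}(-D))=\deg N_{X/S}(-D)-g+1$. For the genus-$2$ triples and for $(6,5,10)$ this value never exceeds $\max(0,\chi(N_{X/\PP^r}(-D)))$:
for $(2,r,r+2)$ with $r=3,4,5$ one has $\chi(N_{X/\PP^r}(-D))=20-2e,\ 29-3e,\ 40-4e$, and at $e=9$ the scroll only gives $h^0=2\le\chi$;
for $(6,5,10)$ one has $\chi=50-4e$, and $15-e>50-4e$ would need $e\ge 12$, which your condition rules out.
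The obstructed degrees are $e=10$ for the genus-$2$ triples, where $\deg N_{X/S}(-D)=2=2g-2$, and $e\in\{12,13,14\}$ for $(6,5,10)$, where $\deg N_{X/S}(-D)<10=2g-2$. Only $(4,3,6)$, at $e=10,11$, fits your condition. The fix is to drop that condition and use the Riemann--Roch bound $h^0(L)\ge \deg L-g+1$, which holds for every $D$. Then choose $e$ with $\deg N_{X/S}(-D)-g+1>\max(0,\chi(N_{X/\PP^r}(-D)))$, so that $h^0\ne 0$ and $h^1=h^0-\chi\ne0$ for every $D$ of degree $e$. One checks that no such $e$ exists for $(2,6,8)$.

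The survey itself does not prove this theorem; it quotes \cite{interpolation}. The positive half is where the genuine gap lies: your outline does not contain an induction that closes. The strengthened hypothesis you propose is interpolation for modifications at general points toward general directions. It cannot be applied to the bundles produced by Lemma~\ref{lem-harthir}, because the directions $T_{p_i}X_2$ are not general.
For a $2$-secant line, the direction at each node points at the other node.
For a rational curve of degree $e$ meeting $X_1$ in $e+1$ points, the standard way to raise the genus, prescribing the normal direction at a node costs $2r-4$ conditions even if the nodes may move along $X_1$. But $(2r-4)(e+1)$ exceeds $(r+1)e+r-3$, the dimension of the space of such curves, once $r\ge 5$.
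In \cite{interpolation}, building on \cite{aly}, the inductive statements carry modifications in the special directions that actually arise (for instance toward other points of the curve). One must also know that the reducible curves used lie in the Brill--Noether component, which is not automatic when $d<g+r$. Neither ingredient is in your sketch.

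Your fallback to projection does not supply them. In $0\to\OO_X(1)(2p)\to N_X\to N_{\bar X}(p)\to 0$, take $D$ of degree $e_0=\lfloor \chi(N_X)/(r-1)\rfloor$. One computes $\chi(\OO_X(1)(2p)(-D))=2-\lfloor(2d+2g-2)/(r-1)\rfloor$, which is negative whenever $2d+2g-2\ge 3(r-1)$, while $\chi(N_X(-D))\ge 0$. So outside a bounded range of $(g,d)$ for each $r$, neither vanishing for $N_X(-D)$ follows from the two pieces without controlling the coboundary map. This is the opposite of leaving finitely many cases. Theorem~\ref{genus1} avoids this only through the genus-one Picard argument, and only for semistability.

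Finally, if the statement is meant in arbitrary characteristic, as in \cite{interpolation}, your rational base case fails in characteristic $2$. By the characteristic-$2$ proposition in \S\ref{sec-rational}, the normal bundle of a general rational curve there is balanced only when $d\equiv 1 \pmod{r-1}$.
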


In general, interpolation and stability are distinct notions. However, interpolation implies semistability when the rank divides the degree. 

\begin{cor}[{Follows from \cite{interpolation}, cf.\ \cite[Remark 1.6]{vogt}}]\label{interpolation}
Let $(g,r, d)$ be a BN-triple such that $r-1$ divides $2d+2g-2$. Assume \[(g,r, d) \notin \{(2,3, 5), (4,3, 6), (2,5, 7)\}.\]
Then for the general BN-curve $X$ of type $(g,r, d)$, $N_{X/\PP^r}$ is semistable. 
\end{cor}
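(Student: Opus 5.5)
The plan is to deduce semistability from interpolation. If $V$ satisfies interpolation with $\operatorname{rk} V$ dividing $\deg V$, then $V$ is semistable. Set $V = N_{X/\PP^r}$, of rank $r-1$ and degree $(r+1)d+2g-2 = (r-1)d + (2d+2g-2)$. The hypothesis $r-1 \mid 2d+2g-2$ is exactly the statement that $\mu(V)$ is an integer; write $\mu(V) = m$.

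First I will dispose of the excluded triples. By Theorem~\ref{lvMain}, for every BN-triple outside $\{(2,3,5),(4,3,6),(2,4,6),(2,5,7),(6,5,10)\}$, the normal bundle of a general BN-curve satisfies interpolation. The statement only excludes $(2,3,5)$, $(4,3,6)$ and $(2,5,7)$, so I must check $(2,4,6)$ and $(6,5,10)$ separately.
\begin{itemize}
\item For $(2,4,6)$: $2d+2g-2 = 14$ is not divisible by $r-1 = 3$, so this triple is not covered by the statement.
\item For $(6,5,10)$: $2d+2g-2 = 30$ is not divisible by $r-1 = 4$, so this triple is also not covered.
\end{itemize}
Hence every remaining triple satisfies interpolation. (For $g=0$ the rational case is handled by the balancedness results of \S\ref{sec-rational}; balanced with integer slope means perfectly balanced, hence semistable. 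Alternatively, interpolation also holds in genus $0$.)

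The core step is the implication from interpolation to semistability when the slope is an integer. Interpolation gives an effective divisor $D$ of degree $e = m - g + 1$ on $X$; if $m-g+1 \le 0$, one uses a twist instead. Then $\chi(V(-D)) = (r-1)(m - e + 1 - g) = 0$, so the condition "$H^0 = 0$ or $H^1 = 0$" forces both groups to vanish. In particular $H^0(V(-D)) = 0$.

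Now suppose $S \subset V$ is a subbundle with $\mu(S) > m$. Since the degrees are integers, $\deg S \ge (\operatorname{rk} S)\, m + 1$. Then
\[
\chi(S(-D)) = \deg S - (\operatorname{rk} S)\, e + (\operatorname{rk} S)(1-g) \ge 1,
\]
so $S(-D)$ has a nonzero section, and therefore so does $V(-D)$, a contradiction. The same argument with a general $D$ handles the case $e \le 0$ after adding a general effective divisor and adjusting. Precisely, interpolation in the form "for every $e$ there is a $D$" is what is needed.

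The main obstacle is that the definition quoted only gives interpolation for $e > 0$ together with $H^1(V) = 0$. So one must check that the required degree $e = m - g + 1$ is positive, i.e.\ that $m \ge g$. Here $m = d + (2d+2g-2)/(r-1) \ge d$, and for BN-curves one has $d \ge g$ unless the curve is nonspecial-free. I expect this follows from $\rho \ge 0$ together with $r \ge 3$, since $\rho(g,r,d) \ge 0$ gives $d \ge r + g\,r/(r+1)$, and then $m = d + (2d+2g-2)/(r-1) \ge g$ holds comfortably.
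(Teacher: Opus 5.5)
Your proposal is correct and follows the paper's argument: the slope $\mu$ is an integer, a divisor $D$ of degree $\mu-g+1$ gives $\chi(N_{X/\PP^r}(-D))=0$, which forces $H^0(N_{X/\PP^r}(-D))=0$, and hence $\deg S-\rk(S)\,\mu=\chi(S(-D))\le 0$ for every subbundle $S$; you also make the same check that $(2,4,6)$ and $(6,5,10)$ fail the divisibility, and your verification that $\mu-g+1>0$ from $\rho\ge 0$ is a detail the paper leaves implicit. Two of your asides are wrong, though neither is used: the section assumes $g\ge 2$, and your $g=0$ remark is false in characteristic $2$, where the general rational curve's normal bundle need not be balanced (e.g.\ $\OO_{\PP^1}(6)\oplus\OO_{\PP^1}(8)$ for rational quartics in $\PP^3$); and $d\ge g$ can fail for BN-curves, even though $\mu\ge g$ always holds.
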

\begin{proof}
If $r-1$ divides $2d+2g-2$, then $\mu(N_{X/\PP^{r}})$ is an integer. Let $D$ be a divisor on $X$ of degree $\mu(N_{X/\PP^{r}})-g+1$ such that $H^0(X,N_{X/ \PP^{r}}(-D))=0$ or $H^1(X,N_{X/\PP^{r}}(-D))=0$. Since $\chi(X,N_{X/\PP^{r}}(-D))=0$, we must in fact have 
\[
H^0(X,N_{X/ \PP^{r}}(-D))=H^1(X,N_{X/\PP^{r}}(-D))=0.
\]
Let $V$ be a nonzero subbundle of $N_{X/\PP^r}$. Then we have $H^0(X, V(-D))=0$, so \[\chi(X, V(-D))=\rk(V) \cdot (\mu(V)-\mu(N_{X/ \PP^r}))\]
is nonpositive, and thus $\mu(V)\leq \mu(N_{X/\PP^r})$.
\end{proof}

\begin{rem}
 The exceptional cases $(2,4, 6)$ and $(6,5, 10)$ do not appear in the corollary because $r-1$ does not divide $2d+2g-2$.  
\end{rem}

\noindent
In view of these theorems, it is natural to conjecture the following.

\begin{conj}\cite[Conjecture 1.1]{clv}\label{conj-main}
All but finitely many BN-triples $(g, r, d)$ with $g \geq 2$  are stable.
\end{conj}

Ballico and Ramella \cite{br99} prove that if $g \geq 2r$ and $d \geq 2g+3r+1$, then the general BN-curve has a stable normal bundle.  Recently, Ran obtained different asymptotic (semi)stability results for the normal bundles of BN-curves in $\PP^r$ and more generally  normal bundles of curves in hypersurfaces \cite{ran23}. Coskun and Smith sharpened and generalized these results, providing some evidence for Conjecture \ref{conj-main}.

\begin{thm}\cite[Theorem 1.1]{coskunsmithnb}\label{largeMain}
Let $X\subset \PP^r$ be a general BN-curve of degree $d$ and genus $g \geq 2$. Let $u = \left\lfloor \frac{(r+1)^2}{2(r-1)} \right\rfloor$. Let $k$ be the integer such that $(k-1) u < g-1 \leq ku$.  Then $N_{X/\PP^r}$ is semistable if  one of the following holds:
\begin{enumerate}
        \item  $
    g\geq \binom{r-1}{2}+2+\left \lceil \frac{5r^2-7r}{2(r-1)}\right\rceil r(r-1),
    $ 
\item 

$d\geq \min\left(g+\frac{r^2}{4}+2r-3, (k+1)(r+1), (g-1)(2r-3)+r+1\right)$.
\end{enumerate}
\end{thm}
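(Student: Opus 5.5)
We argue by degeneration and induction on the genus, using Proposition~\ref{prop:stab-open} and Lemma~\ref{lem:naive}, with the genus~$1$ result (Theorem~\ref{genus1}) and its refinement Proposition~\ref{prop-g1distinct} as the basic building blocks. Specifically, we degenerate a general BN-curve $X$ of genus $g$ and degree $d$ to a nodal union $X' \cup E$. Here $X'$ is a BN-curve of smaller genus and $E$ is a curve of genus $0$ or $1$ meeting $X'$ in a controlled number of points. For the range in part (2) with $k$ appearing, we degenerate to a chain or comb $E_1 \cup \cdots \cup E_{k+1}$ of elliptic (or rational) curves. Each $E_i$ has degree about $r+1$ and is attached to the others at roughly $u$ points; this explains the choice $u = \lfloor (r+1)^2/(2(r-1)) \rfloor$. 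Gluing two components along $t$ nodes adds $t-1$ to the genus, so the condition $(k-1)u < g-1 \le ku$ records how many elliptic pieces are needed to reach genus $g$, and the total degree is then $(k+1)(r+1)$.

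The key steps are as follows. First, by Lemma~\ref{lem-harthir}, the restriction of $N_{X'\cup E}$ to each component is a positive elementary modification $N_{E/\PP^r}[p_1 \posmodalong X'] \cdots [p_t \posmodalong X']$. Second, by Lemma~\ref{lem:naive} and Proposition~\ref{prop:stab-open}, semistability of $N_X$ follows once each such modified restriction is semistable. Third, we prove a genus~$1$ statement with modifications. Let $E$ be general of degree $e \ge r+1$ and let $p_1,\dots,p_t$ be general points with general directions, with $t \le u$ (this is exactly where the bound $t(r-1)/2 \lesssim$ the allowable slope room enters). Then $N_{E/\PP^r}[p_1\posmod] \cdots [p_t \posmod]$ is semistable. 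We prove this by first choosing $t$ compatible with the direct-sum decomposition of Proposition~\ref{prop-g1distinct}, so that each modification can be distributed among the summands to keep them of equal slope. We then use monodromy of the points $p_i$ (the modification directions move with $X'$) to rule out any destabilizing subbundle, via an argument like the rational-base Chern class argument of Lemma~\ref{lem-blackmagic}. Fourth, for the bounds $d \ge g + r^2/4 + 2r-3$ and $d \ge (g-1)(2r-3)+r+1$, we instead attach rational curves (one-secant or two-secant rational normal curves) to a lower-genus curve already known to be semistable. Adding a rational curve of degree $e$ meeting in $t$ points increases the genus by $t-1$. The restriction to the rational component is a modified $\OO(e+2)^{\oplus(r-1)}$-type bundle, which is perfectly balanced when the modifications are chosen generally and the numerics allow. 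Part (1) then follows by combining (2) with the fact that for large $g$ every BN-triple admits such a chain decomposition. This reduces the large-genus case to finitely many residue patterns in the degree, each handled by one of the three constructions.

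The main obstacle is the third step: semistability of the modified elliptic normal bundle when the number of modification points is large (up to $u$). The modifications do not respect the splitting, and the genus~$1$ Chern-class trick only forces $\deg S$ to be divisible by $d$ before modifications, not after. We would first try to handle this by specializing the modification points onto one summand at a time, so that the inequality analysis from the proof of Theorem~\ref{genus1} still leaves no integer in the forbidden interval, and by falling back on the Farey-type argument of \cite[Lemma 2.6]{clvcanonical} when slopes are adjacent.
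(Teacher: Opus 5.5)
The survey quotes this theorem from \cite{coskunsmithnb} without proof, so I am judging your outline on its own terms. As it stands it has genuine gaps.

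\textbf{The genus-one step.} Your third step carries essentially all the content, and it is not established; the tools you invoke do not give it.
Proposition~\ref{prop-g1distinct} requires $(r-1)\mid 2e$. For the elliptic normal curves you use ($e=r+1$) this means $(r-1)\mid 4$, so the splitting into distinct line bundles is unavailable unless $r-1\mid 4$.
Even when $N_E=\bigoplus L_j$, keeping the summands of equal degree would force every direction to lie in some $L_j|_{p_i}$ and $t$ to be a multiple of $r-1$. That is impossible with $0<t\le u<r-1$ once $r\ge 6$, and with general directions the splitting is not preserved at all.
The Chern-class argument of Lemma~\ref{lem-blackmagic} and \cite[Lemma 2.6]{clvcanonical} rests on naturality under translations of $E$ fixing $\OO_E(1)$. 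That naturality is lost once points $p_i$ are marked on $E$, because translations move them.

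\textbf{The role of $u$ and the bookkeeping.} Your account of $u$ is off. On a genus-one curve semistable bundles exist in every slope, so there is no ``slope room'' constraint.
The bookkeeping is also inconsistent. In any comb or chain with $g-1=\sum t_i$ and $t_i\le u$, some component carries more than $u$ nodes; the spine of a comb carries all $g-1$. So a statement restricted to $t\le u$ cannot handle every component.
A natural reading of $u$ is $2u(r-1)\le (r+1)^2=h^0(N_E)$ for an elliptic normal curve $E$. Passing through a fixed point with a fixed tangent line imposes $2(r-1)$ conditions, so $E$ can be forced to have prescribed tangent lines at $t\le u$ given points. That is what makes the modifications induced on the \emph{other} component general.
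What you actually need is semistability of an elliptic normal bundle after arbitrarily many modifications in general directions, plus a proof that the directions produced by the gluing are general enough.

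\textbf{The rational-curve step.} For a $t$-secant rational normal curve $R$, $\deg N_X|_R=(r-1)(r+2)+t$. For $t\in\{1,2\}$ and $r\ge 4$ the restriction is therefore never perfectly balanced, hence never semistable, and Lemma~\ref{lem:naive} does not apply. The bounds $g+\frac{r^2}{4}+2r-3$ and $(g-1)(2r-3)+r+1$ are never derived.
Moreover, the restriction to the lower-genus component is $N_{X'}[p_1\posmodalong R]\cdots[p_t\posmodalong R]$, and semistability of $N_{X'}$ does not imply semistability of this modification. Already for one modification, take a rank-$w$ subbundle $W\subsetneq N_{X'}$ with $\mu(W)=\mu(N_{X'})$ and the modification direction in $W_p$. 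It becomes a subbundle of slope $\mu(N_{X'})+1/w>\mu(N_{X'})+1/(r-1)$. Your induction therefore needs a hypothesis about modified normal bundles, which you never formulate.

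\textbf{Part (1).} Part (1) cannot follow from part (2). For $r\ge 3$ each of the three bounds in (2) exceeds $g$, while for large $g$ there are BN-triples with $d<g$: for example the $\rho=0$ triples $g=(r+1)s$, $d=r(s+1)$ with $s>r$.
Your combs cannot reach these either, since attaching an elliptic normal curve along $t\le u$ points changes $\rho$ by $(r+1)^2-rt>0$.
Part (1) requires a separate inductive operation that increases $g$ while keeping $\rho$ (equivalently $d-\frac{r}{r+1}g$) bounded, and nothing of that kind appears in your proposal.

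\textbf{Smoothability.} You should also check that your nodal curves lie in the closure of the Brill--Noether component, so that Proposition~\ref{prop:stab-open} gives the statement for the general BN-curve.
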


Let $b_2(r)$ be the least integer such that for all $d \geq b_2(r)$, there exists positive integers $d_1, d_2$ such that $d= d_1 + d_2$, $d_1, d_2 \geq r+1$, and $\gcd(r-1, 2d_1 + 1) =1$. If $p \geq 5$ is the smallest prime number  that does not divide $r-1$, then $b_2(r) \leq 2r+\frac{p-1}{2}$.  In general, $b_2(r)= 2r + O(\log(r))$ and  $b_2(r) \leq \frac{5r-3}{2}$. In fact, if $r \geq 1636$, then $b_2(r) \leq 2.01r + 2.015$. Then preserving the notation from Theorem \ref{largeMain}, we have:

\begin{thm}\cite[Theorem 1.2]{coskunsmithnb}\label{thm-introstab}
   Let $X\subset \PP^r$ be a general BN-curve of degree $d$ and genus $g \geq 2$.  Then $N_{X/ \PP^r}$ is stable if  one of the following holds:
\begin{enumerate}
    \item  $g \geq \binom{r-1}{2}+3+\left \lceil\frac{2(r+1)b_2(r)+3r^2-13r-2}{2(r-1)}\right \rceil r(r-1),$
\item $d\geq b_2(r) + \min\left(g+\frac{r^2}{4}+r-5, k(r+1), (g-1)(2r-3)\right)$.
\end{enumerate} 
\end{thm}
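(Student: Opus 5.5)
The plan is to upgrade the semistability of Theorem~\ref{largeMain} to stability by one degeneration, using Lemma~\ref{lem:naive} and Proposition~\ref{prop:stab-open}. The $b_2(r)$ term in both hypotheses suggests the degeneration: specialize $X$ to a nodal union $X_1 \cup X_2$. Here $X_1$ is a general BN-curve of genus $g$ and degree $d - d_2$ to which Theorem~\ref{largeMain} applies. $X_2$ is a general elliptic curve of degree $d_2 \geq r+1$ (or a rational curve of that degree, depending on the bookkeeping), meeting $X_1$ in a suitable number of general points so that the total genus is $g$. The numbers come from the definition of $b_2(r)$: write the extra degree as $d_1 + d_2$ with $d_i \geq r+1$ and $\gcd(r-1, 2d_1+1) = 1$. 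This coprimality is exactly what makes some bundle of rank $r-1$ have coprime rank and degree, so that semistability of that bundle forces stability. The shift of the hypotheses of Theorem~\ref{largeMain} by $b_2(r)$ (in $d$, and correspondingly in $g$ in part~(1)) is what guarantees that $X_1$ still satisfies the semistable range.

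The key steps, in order, are the following.

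\begin{enumerate}
\item Check that the reducible curve $X_1 \cup X_2$ is a limit of BN-curves of type $(g,r,d)$ (standard smoothing for nodal unions of BN-curves meeting in at most $r+1$ general points), and that the numerical hypotheses of Theorem~\ref{largeMain} hold for $X_1$.

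\item By Lemma~\ref{lem-harthir}, the normal bundle restricts to each component as a positive modification of that component's normal bundle at the nodes. Choose the attachment so that the modification on $X_2$ is a bundle of rank $r-1$ and degree $(r+1)d_2 + (\text{number of nodes})$, which is coprime to $r-1$. If there is a single node and $X_2$ is elliptic, the degree is $(r+1)d_2 + 1$, and coprimality is arranged via the $\gcd(r-1, 2d_1+1)$ condition after rewriting.

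\item Prove that this modified bundle on $X_2$ is semistable; by coprimality it is then stable.

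\item Prove that the modified bundle on $X_1$ is semistable. Since the modification is at general points toward general directions, it suffices to know that $N_{X_1}$ is semistable and that a general positive modification at a point preserves semistability. Modifying in this way only raises the degree by one, and a destabilizing subbundle would have to contain the general direction, which fails generically.

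\item Apply Lemma~\ref{lem:naive}, which gives stability of the nodal bundle, and then Proposition~\ref{prop:stab-open}, which passes stability to the general fiber.
\end{enumerate}

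I expect step~3 to be the main obstacle: stability of a modified normal bundle of an elliptic (or rational) curve. For an elliptic $X_2$, I would first try to redo the proof of Theorem~\ref{genus1} with the modification included. Project from a general point to get the subbundle $\OO(1)(2p)$ and a modified quotient. The natural-map argument of Lemma~\ref{lem-blackmagic} then shows that any destabilizing subbundle has degree divisible by $d_2$, up to the controlled correction coming from the modification point. Combined with the slope window, this leaves no integer solutions. The delicate part is that the modification point breaks the group-translation symmetry. To handle this, I would take the node to be a general point, and then either use the point-dependence to argue that the Chern class of a destabilizer would vary nontrivially, or absorb the correction into the Farey-type slope bound of \cite[Lemma 2.6]{clvcanonical}.
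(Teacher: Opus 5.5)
You decoded $b_2(r)$ correctly. An elliptic tail $X_2$ of degree $d_1$ attached at one point has $N|_{X_2}$ of rank $r-1$ and degree $(r+1)d_1+1\equiv 2d_1+1 \pmod{r-1}$, so semistability there forces stability.

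\textbf{Step~4 is a genuine gap.} You treat it as routine, but it fails. Passing from $N_{X_1}$ to $N_{X_1}[p\posmod v]$ raises the slope by only $\frac{1}{r-1}$. A rank-$s$ subbundle $F\subset N_{X_1}$ with $v\in F|_p$ gives $F[p\posmod v]\subset N_{X_1}[p\posmod v]$ of slope $\mu(F)+\frac1s$. So you must rule out rank-$s$ subbundles with $\mu(F)>\mu(N_{X_1})-\frac1s+\frac1{r-1}$ whose fiber at $p$ contains $v$. Such subbundles can come in families sweeping out all of $\PP(N_{X_1})$, and then no choice of $(p,v)$ works.

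Example~\ref{ex-canonical} already gives a counterexample. For the general canonical curve of genus $5$, $N_{X/\PP^4}\cong\OO_X(2)^{\oplus 3}$ is semistable. Yet for every $(p,v)$, the constant copy of $\OO_X(2)$ whose fiber at $p$ is $v$ produces $\OO_X(2)(p)\subset N[p\posmod v]$, of slope $\mu+1>\mu+\frac13$. For $r\ge 4$, even stability of $N_{X_1}$ would not close this window in general.

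Theorem~\ref{largeMain} supplies only semistability, so $N_{X_1}$ cannot be used as a black box. You would need one of the following:
\begin{enumerate}
\item a much finer statement about subbundles of $N_{X_1}$ through general directions, or
\item a degeneration of $X_1$ itself in which the extra node lands on a component whose modified normal bundle you control explicitly.
\end{enumerate}

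\textbf{Step~3 has the same problem on the elliptic side.} Your proposed tool does not address it. With a marked point, Lemma~\ref{lem-blackmagic} gives no divisibility. Since $(L,p)\mapsto L^{\otimes a}(bp)$ is natural for all integers $a,b$, naturality places no constraint on the degree of a destabilizing subbundle. Semistability of $N_{X_2}[p\posmod T_pX_1]$ has to come from the explicit structure of normal bundles of elliptic curves, not from naturality.

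\textbf{The degeneration also needs repair, and it cannot reach part~(1).}
\begin{enumerate}
\item An elliptic $X_2$ meeting a genus-$g$ curve raises the genus, so $X_1$ must have genus $g-1$. For $g=2$ you need Theorem~\ref{genus1} rather than Theorem~\ref{largeMain}.
\item The rational alternative is unavailable. $N|_{X_2}$ would be a split bundle of rank $r-1\ge2$ on $\PP^1$, hence never stable. Lemma~\ref{lem:naive} would then require stability on the $X_1$ side, which is what you are trying to prove.
\item With $X_1$ of genus $g-1$ and degree $d-d_1$, one has $\rho(g-1,r,d-d_1)=\rho(g,r,d)-1-(r+1)(d_1-1)$.
\end{enumerate}
Under hypothesis~(2) this drop in $\rho$ is harmless. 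There your numerics do match the stated bound, because $b_2(r)-r-1$ is exactly the least $d_1\ge r+1$ with $\gcd(r-1,2d_1+1)=1$.

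Hypothesis~(1), however, allows the minimal $d$ with $\rho(g,r,d)\ge0$, and for that $d$ one has $\rho(g,r,d)\le r$. Then $\rho(g-1,r,d-d_1)<0$, and no such $X_1$ exists. For part~(1), the added pieces must meet the rest of the curve in many points so that $\rho$ does not drop. That changes both the number of modifications on each component and the congruence that governs coprimality.
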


\noindent
In particular, we obtain the following corollary.
\begin{cor}
  For every $r$, there are finitely many BN-triples $(g,r,d)$ that are not (semi)stable.
\end{cor}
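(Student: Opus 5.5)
Fix $r$. The plan is to combine the two stability theorems just stated, Theorem~\ref{largeMain} and Theorem~\ref{thm-introstab}, with the Brill--Noether inequality $\rho(g,r,d)\geq 0$. Together these should confine the possibly non-(semi)stable BN-triples to a bounded region of the $(g,d)$-plane, and that region contains only finitely many lattice points.

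First, I would bound the genus. By part~(1) of Theorem~\ref{thm-introstab}, if $g \geq G(r)$, where
\[G(r) \colonequals \binom{r-1}{2}+3+\left\lceil\frac{2(r+1)b_2(r)+3r^2-13r-2}{2(r-1)}\right\rceil r(r-1),\]
then $N_{X/\PP^r}$ is stable, and in particular semistable. The quantity $b_2(r)$ is a well-defined integer depending only on $r$: we noted that $b_2(r) \leq \frac{5r-3}{2}$. Hence $G(r)$ depends only on $r$, and any unstable or non-stable triple has $g < G(r)$. Part~(1) of Theorem~\ref{largeMain} gives a similar (smaller) bound for semistability, but the stable bound already suffices for both statements.

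Second, for each of the finitely many $g$ with $2 \leq g < G(r)$, I would bound $d$ using part~(2) of Theorem~\ref{thm-introstab}. Once $g$ and $r$ are fixed, $k$ is determined, since $u$ depends only on $r$. So the threshold
\[b_2(r) + \min\left(g+\tfrac{r^2}{4}+r-5,\ k(r+1),\ (g-1)(2r-3)\right)\]
is a finite number depending only on $(g,r)$, and every $d$ at or above it gives a stable triple. It is even simpler to use only the first entry of the minimum: for $d \geq b_2(r)+g+\frac{r^2}{4}+r-5$ the triple is stable, and this bound is uniform in $g$ once $g<G(r)$. From below, $\rho(g,r,d)\geq 0$ gives $d \geq g+r-\frac{g}{r+1}$, and $d\geq 0$ in any case. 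Therefore, for each such $g$, only finitely many $d$ remain.

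Combining the two steps, every triple that is not stable (and a fortiori every triple that is not semistable) lies in the finite set
\[\{(g,d) : 2\leq g< G(r),\ 0\leq d < b_2(r)+G(r)+\tfrac{r^2}{4}+r\}.\]
This proves the corollary. There is no real obstacle here. The only point needing care is to check that every bound depends on $r$ alone, with $b_2(r)$ finite and $k$ a function of $g$ and $r$. If one prefers to avoid $b_2(r)$ when proving semistability, one can instead run the same argument with Theorem~\ref{largeMain}.
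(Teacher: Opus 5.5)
Your proposal is correct and matches the paper's approach: the corollary is stated as an immediate consequence of Theorem~\ref{thm-introstab} (and Theorem~\ref{largeMain}), with part~(1) bounding the genus in terms of $r$ alone and part~(2) then bounding the degree for each of the finitely many remaining genera. Two details in your write-up are exactly the checks needed. The first entry of the minimum gives a degree bound uniform over $g<G(r)$, and since stability implies semistability, a single finite set handles both readings (under the section's standing convention $g\geq 2$).
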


\begin{problem}
    Classify the BN-triples that are not (semi)stable. 
\end{problem}

One can ask more detailed questions about the geometry of the locus where $N_{X/\PP^r}$ is not stable. Close to nothing seems to be known about the following problem.

\begin{problem}
    Describe the locus of BN-curves whose normal bundle is not stable. Determine the dimension and the number of irreducible components of this locus and compute the generic HN-filtration for each irreducible component.
\end{problem}

We have so far concentrated primarily on the case when the ambient variety is $\PP^r$. When the ambient variety is not $\PP^r$, even the analogue of the Brill--Noether theorem is often not known.

\begin{problem}
    Determine when the normal bundles of curves in other homogeneous varieties or complete intersections in homogeneous varieties are stable. When they are not stable determine their HN-filtration.
\end{problem}

\noindent
Ran has obtained some partial results for curves on Fano hypersurfaces (see \cite{ran23b, ran24, ran25}).

We have only surveyed these problems when $X$ is a curve. Similar questions can be posed for higher dimensional varieties.

\begin{question}
    Let $f\colon \PP^s \to \PP^r$ be a general nondegenerate map. Is the normal bundle $N_f$ stable? 
\end{question}

\noindent
It is known that  $N_f$ is semistable when $f$ is the $d$-uple Veronese embedding, at least when the characteristic is 0 (see for example \cite[Theorem 1.1]{shang}).

\begin{question}
    Let $X$ be a general rational normal scroll in $\PP^r$. Is $N_{X/\PP^r}$ stable? One can ask the same question when $X$ is replaced by any other variety. It would be very interesting to settle this question for intensively studied classes of varieties such as K3 surfaces, abelian varieties, and smooth toric varieties. When the Picard rank of $X$ is large, one can also consider the question for different polarizations.
\end{question}

\section{Bundles associated to covers}\label{sec-covers}

In this final section, we will survey some recent results on natural bundles associated to finite coverings of curves.

\subsection{Tschirnhausen bundles} Let \(f\colon X \to Y\) be a finite map of degree \(r\) between nonsingular, irreducible, projective curves of genera \(g(X)=g\) and \(g(Y)=h\), defined over an algebraically closed field of characteristic 0 or larger than \(r\). The map \(f\colon X \to Y\) is called \defi{primitive} if the induced map on \'etale fundamental groups \(f_*\colon\pi_1^{\et}(X)\to \pi_1^{\et}(Y)\) is surjective.  In general, every finite map \(f \colon X \to Y\) factors uniquely into a primitive map \(f^\pr \colon X \to Z\) followed by an \'etale map \(f^\et \colon Z \to Y\), where \(Z\) corresponds to the image \(f_* \pi_1^\et(X)\) in  \(\pi_1^{\et}(Y)\).

The inclusion \(\OO_Y \to f_* \OO_X\) given by pullback of functions admits a splitting by \(\frac{1}{r}\) times the trace map.  Hence, \[f_* \OO_X = \OO_Y \oplus \DTs(f)\] for a vector bundle \(\DTs(f)\). The dual of \(\DTs(f)\) is called the \defi{Tschirnhausen bundle} \(\Tsch(f)\) associated to \(f\). The bundle \(\Tsch(f)\) plays a central role in the study of covers of curves and the geometry of the Hurwitz scheme. A natural question raised, and answered asymptotically, by \cite{deopurkarpatel} and \cite{landesmanlitt} is to determine when \(\Tsch(f)\) is stable.

 Since $f_* \OO_X = \OO_Y \oplus \Tsch(f)^{\vee}$, we have 
\[\rk(\Tsch(f))= r-1 \quad \mbox{and} \quad \deg(\Tsch(f))= \deg(f_* \OO_X).\]
Let $b$ be the degree of the ramification divisor of \(f \colon X \to Y\). By the Riemann--Roch Theorem,
\[1-g = \chi(\OO_X)= \chi(f_* \OO_X) = \deg(\Tsch(f)^{\vee}) + r(h-1).\] By the Riemann--Hurwitz Theorem, we conclude that \[\deg(\Tsch(f))= \frac{b}{2}.\]

\begin{rem}
    Observe that \(\Tsch(f^\et)\) is naturally a direct summand of \(\Tsch(f)\).  Moreover,  \[\deg(\Tsch(f^\et))=0 \quad \mbox{and} \quad \deg(\Tsch(f))>0 \ \mbox{when} \ f \ \mbox{is not \'etale.}\] 
Hence, if \(f \colon X \to Y\) has a nontrivial factorization \(f = f^\et \circ f^\pr\) into an \'{e}tale and primitive part,  then \(\Tsch(f)\) is unstable. Therefore, the (semi)stability problem for Tschirnhausen bundles is only interesting when \(f\) is either primitive or \'etale. 
\end{rem}

\subsubsection*{The \'etale case} If \(f \colon X \to Y\) is \'etale, then the Galois group \(\Gal(X/Y)\) acts on the fibers of \(f\) by permutation. Fixing a basepoint, this action realizes \(\Gal(X/Y)\) as a subgroup of the symmetric group \(\mathfrak{S}_r\). In this case, $\Tsch(f)$ is always semistable and whether it is stable depends on the representation theoretic properties of the action of $\Gal(X/Y)$.

\begin{prop}\cite[Proposition 1.3]{clvtschirnhausen}\label{prop-etale}
Let \(f\colon X \to Y\) be an \'etale cover of degree \(r\). Then \(\Tsch(f)\) is semistable. Furthermore, \(\Tsch(f)\) is stable if and only if the restriction of the standard representation of \(\mathfrak{S}_r\) to \(\Gal(X/Y)\) is irreducible.
\end{prop}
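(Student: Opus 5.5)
The plan is to pass to the Galois closure, where $f_*\OO_X$ becomes a flat bundle attached to a finite group representation. Choose a connected Galois étale cover $\pi\colon Z \to Y$ with group $G$ that factors through $X$. Then $X = Z/H$ for a subgroup $H \subset G$ of index $r$. The bundle $f_*\OO_X$ is the flat bundle associated to the permutation representation $\mathbb{C}[G/H]$ (over our base field), via the monodromy $\pi_1^{\et}(Y) \to G$. Splitting off the trivial summand, $\Tsch(f)^\vee$ is the flat bundle $E_\rho$ attached to the standard representation $\rho$, restricted to $G$. Since $E_\rho$ is self-dual up to the same construction, $\Tsch(f) \cong E_{\rho^\vee}$, and $\rho^\vee \cong \rho$ for permutation representations. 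The image of $G$ in $\mathfrak{S}_r$ is exactly $\Gal(X/Y)$ acting on a fiber, so irreducibility of $\rho|_{\Gal(X/Y)}$ is the relevant condition.

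Semistability comes from pulling back. We have $\pi^*E_\rho \cong \OO_Z^{\oplus (r-1)}$, which is semistable of slope $0$. If $W \subset \Tsch(f)$ is a subbundle, then $\pi^*W \subset \OO_Z^{\oplus(r-1)}$, so $\deg \pi^* W = |G|\deg W \le 0$. Hence $\mu(W)\le 0 = \mu(\Tsch(f))$. (This uses that pullback under finite separable maps preserves semistability in the easy direction, which is all we need; the characteristic hypothesis ensures the Galois closure is separable.)

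For stability, suppose first that $\rho$ is reducible, say $\rho = \rho_1 \oplus \rho_2$. Characteristic $0$ or larger than $r$ gives this complete reducibility only when $|G|$ is invertible, and I would address that separately. In any case a $G$-subrepresentation $\rho_1$ yields a flat subbundle $E_{\rho_1}$ of degree $0$, so $\Tsch(f)$ is not stable.

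Conversely, suppose $\rho$ is irreducible and $W \subset \Tsch(f)$ has degree $0$; we want $W = 0$ or $W = \Tsch(f)$. Then $\pi^*W \subset \OO_Z^{\oplus(r-1)}$ has degree $0$. A degree-zero subbundle of a trivial bundle is itself trivial: its dual is a globally generated quotient of degree $0$, hence trivial. So $\pi^*W = \OO_Z\otimes U$ for a subspace $U \subset k^{r-1} = H^0(Z, \pi^*E_\rho)$. Because $\pi^*W$ is $G$-stable (it descends), $U$ is a $G$-subrepresentation of $\rho$, forcing $U = 0$ or $U$ is everything. Thus $W$ is trivial or all of $\Tsch(f)$.

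The main obstacle is making the step ``$\pi^*W$ determines a $G$-stable subspace of $H^0$'' rigorous, including the identification $H^0(Z,\pi^*E_\rho) = \rho$ compatibly with the $G$-action. I would do this by descent: $E_\rho = (\pi_*\OO_Z \otimes \rho)^G$. A second point to handle is the case where the characteristic divides $|G|$ but exceeds $r$. There I would avoid complete reducibility altogether. Any $G$-subrepresentation (not necessarily a summand) still gives a degree-$0$ subbundle, and that is all the reducible direction needs.
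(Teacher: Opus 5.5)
Your proof is correct and complete. Pulling back to the Galois closure turns $\Tsch(f)$ into $\OO_Z\otimes\rho^\vee$. Semistability then follows from $\deg\pi^*W=|G|\deg W$ together with semistability of the trivial bundle. The degree-zero subbundles of $\Tsch(f)$ are exactly the descents of $\OO_Z\otimes U$ for $G$-stable subspaces $U\subset H^0(Z,\pi^*\Tsch(f))\cong\rho^\vee$. Since $\rho^\vee$ is irreducible if and only if $\rho$ is, this gives the stability criterion in both directions.

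The survey states this result only by citation to \cite{clvtschirnhausen} and gives no proof, so there is nothing in the text to compare your argument against. Your Galois-descent argument works in any characteristic in which $\Tsch(f)$ is defined. Over $\mathbb{C}$ one could instead invoke Narasimhan--Seshadri, since representations of finite groups are unitary and hence give polystable flat bundles that are stable exactly when the representation is irreducible. Your route avoids that analytic input.

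Two of your side worries are unnecessary:
\begin{itemize}
\item If you take $Z$ to be the Galois closure, then $G$ acts faithfully on $G/H$, so $G\hookrightarrow\mathfrak{S}_r$. Hence $|G|$ divides $r!$ and is invertible whenever the characteristic is $0$ or exceeds $r$.
\item You noted correctly that the reducible direction never needs complete reducibility. Any proper nonzero subrepresentation already descends to a proper degree-zero subbundle.
\item The parenthetical about separability is not needed. \'Etale maps are automatically separable, and the inequality only uses $\deg\pi^*W=\deg\pi\cdot\deg W$.
\end{itemize}
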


\subsubsection*{The primitive case} When the characteristic of the base field is 0 or greater than \(r\), then the Hurwitz space of primitive degree \(r\) genus \(g\) covers of \(Y\) is irreducible if the degree of the ramification divisor is positive, and otherwise empty. Hence, we can refer to the general degree \(r\) genus \(g\) primitive cover of \(Y\). 

\begin{thm}[{\cite{ballico89} for \(h=0\), and \cite[Theorem 1.4]{clvtschirnhausen} for \(h \geq 1\)}]\label{thm:tsch}
Let \(f \colon X \to Y\) be a general primitive degree \(r\) cover, where \(X\) has genus \(g\) and \(Y\) has genus \(h\), over an algebraically closed field of characteristic 0 or greater than \(r\).  
\begin{enumerate}
   \item\label{main_h1} If \(h=0\), then \(\Tsch(f)\) is  balanced.
   \item\label{main_h1} If \(h=1\), then \(\Tsch(f)\) is  semistable.
    \item\label{main_h2} If \(h\geq 2\), then \(\Tsch(f)\) is stable.
\end{enumerate}
\end{thm}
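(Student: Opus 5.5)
We argue by degeneration, using Proposition~\ref{prop:stab-open} together with Lemma~\ref{lem:naive}. Since the Hurwitz space of primitive covers is irreducible, it suffices to exhibit one flat family of covers whose general member is a primitive degree $r$ cover of a genus $h$ curve and whose special fiber has a (semi)stable Tschirnhausen bundle in the nodal sense. For $h=0$ we would instead specialize within $\PP^1$-covers and use that balancedness is open; we expect the $h=0$ case to follow from Ballico's argument, so we concentrate on $h\geq 1$.

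For $h=1$, the plan is to degenerate the target $Y$ to a nodal curve $Y_0 = E\cup P$, with $E$ elliptic and $P\cong \PP^1$ meeting at one point. The cover $X\to Y$ degenerates to an admissible cover $X_0 \to Y_0$. Over $E$ we take an \'etale cover of degree $r$ with full monodromy $\mathfrak{S}_r$; equivalently, we take all ramification on the $P$ side, using a general primitive cover of $\PP^1$ with the appropriate ramification and an unramified fiber over the node. The Tschirnhausen bundle of the family restricts on $E$ to $\Tsch$ of the \'etale part, which is semistable by Proposition~\ref{prop-etale}. On $P$ it restricts to $\Tsch$ of a general primitive cover of $\PP^1$, which is balanced by case (1).

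The main subtlety is that the restriction to $P$ is only balanced, not semistable, unless the rank divides the degree. So instead of Lemma~\ref{lem:naive} we must verify the adjusted-slope inequality directly: for a subbundle $F\subset \nu^*V$, compare $\mu(F|_E)\le 0$ with the degree bound on $F|_P$ coming from balancedness, and use the gluing at the node. A destabilizing $F$ would need its fiber on $P$ to contain the top summands; the relevant codimension term then penalizes it unless $F|_E$ is an $\mathfrak{S}_r$-invariant subspace at the node. Such subspaces do not exist, because the standard representation is irreducible. To make this work we would choose the ramification distribution, and possibly several rational tails attached at distinct points of $E$, so that each rational piece has small degree $\le r-1$. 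Then the only positive summands form a generic subspace, and the monodromy-generic gluing kills every potential destabilizer. This balancing of degrees against gluing is the step we expect to be the main obstacle.

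For $h\geq 2$, we induct on $h$ in the same way. We degenerate $Y$ to $Y'\cup E$, where $Y'$ has genus $h-1$ and the cover over $E$ is \'etale with full symmetric monodromy. The restriction to $E$ is then stable by Proposition~\ref{prop-etale}, since the standard representation of $\mathfrak{S}_r$ is irreducible, and it has slope $0$. The restriction to $Y'$ is (semi)stable by induction, with the $h=1$ case as base. To apply Lemma~\ref{lem:naive} the two slopes must agree, but they do not ($0$ versus positive). We would therefore run the adjusted-slope argument again. We expect stability on one side plus generic gluing via the monodromy of the \'etale piece to upgrade semistability to stability, and this upgrade is exactly where $h\ge2$ is needed.
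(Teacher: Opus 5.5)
The survey gives no proof of this theorem; it only cites \cite{ballico89} and \cite{clvtschirnhausen}. So your outline has to stand on its own, and it breaks at its central mechanism. An \'etale cover of a genus $1$ curve $E$, connected or not, has abelian monodromy: $\pi_1^{\et}(E)\cong\widehat{\mathbb{Z}}^2$, and every connected \'etale cover of $E$ is an isogeny. Hence for $r\geq 3$ there is no degree $r$ \'etale cover of $E$ with monodromy $\mathfrak{S}_r$. The standard representation restricted to the actual monodromy group splits into characters. So $\Tsch$ of the \'etale piece over $E$ is a direct sum of torsion line bundles: semistable by Proposition~\ref{prop-etale}, but never stable. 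Its fiber at the node also has many monodromy-invariant subspaces. The case $r=2$ is trivial, since $\Tsch$ is then a line bundle. So this kills your $h=1$ argument in every nontrivial case. It equally kills your step from $h=1$ to $h=2$, which needs stability (irreducible monodromy) on the elliptic component.

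There are two further problems. First, Lemma~\ref{lem:naive} has no equal-slope hypothesis. On $Y'\cup E$ a subbundle $F\subset\nu^*V$ has the same rank on both components, and both $\mu(F)$ and $\mu(V)$ are sums of the slopes of the restrictions. So for $h\geq 3$, degenerating with any \'etale cover over $E$ (even the trivial one) already gives stability from the case $h-1\geq 2$. All the content lies in $h=1$ and $h=2$, where your degenerations yield only semistable or merely balanced pieces.

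Second, suppose $\mathfrak{S}_r$-monodromy is available, as for an \'etale cover of a component $Y$ of genus $\geq 2$ with a rational tail $P$ carrying all the branching. Irreducibility then only rules out degree-zero subbundles of the flat bundle $V|_Y=\Tsch(f^{\et})$. Write $V|_P\cong\OO(q)^{\oplus(r-1-t)}\oplus\OO(q+1)^{\oplus t}$ with $0<t<r-1$. A typical dangerous subbundle is $F=F_Y\sqcup F_P$, where $F_P\subset\OO(q+1)^{\oplus t}$ has rank $s\leq t$, $\deg F_Y=-1$, and the fibers match at the node. Then $\mu^{\mathrm{adj}}(F)=q+1-\frac{1}{s}$. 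This exceeds $\mu(V)=q+\frac{t}{r-1}$ as soon as $s(r-1-t)>r-1$.

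Excluding such negative-degree $F_Y$ with a prescribed fiber is a genuine dimension-count and genericity problem, not a representation-theoretic one. Negative-degree subbundles are not constrained by monodromy invariance at all. Gluing over an unramified node is also only a choice of permutation, so it offers no continuous freedom. This is exactly the step you flag as the main obstacle, and the proposal does not supply it.
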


Theorem \ref{thm:tsch} shows that one can get a rational map from the Hurwitz scheme parameterizing primitive covers of $Y$ to the moduli space of stable bundles on $Y$. 
Little seems to be known about the base locus of this rational map.

\begin{problem}
    Describe the locus in the Hurwitz scheme parameterizing primitive covers $f\colon X \to Y$ such that $\Tsch(f)$ is unstable.
\end{problem}

 \subsection{Generic pushforwards} 
 We suppose again that the characteristic is \(0\) or greater than \(r\).
If instead of pushing forward $\OO_X$, we push forward a general line bundle (or more generally, a general vector bundle), the pushforward will no longer split. 
 Motivated by the study of the theta linear series on the moduli spaces of vector bundles on curves, Beauville in \cite{beauville} (see also \cite[Conjecture 6.5]{beauville2}) made the following conjecture: 

\begin{conj}[Beauville]\label{conj-main}
Let $f\colon X \to Y$ be a finite morphism between smooth irreducible projective curves, and let $V$ be a general vector bundle on $X$. Then $f_* V$ is stable if the genus of \(Y\) is at least \(2\) and semistable if the genus of \(Y\) is \(1\).
\end{conj}

Let $\HH_{r,g}(Y)$ denote the Hurwitz space parameterizing smooth connected degree $r$ genus $g$ covers of $Y$. In general, $\HH_{r,g}(Y)$ is reducible, with components corresponding to the \'etale part of the primitive-\'etale factorization.
More explicitly, the components correspond to subgroups of the (\'etale) fundamental group $\pi_1(Y)$ of index either equal to \(r\) if \(g = r(h - 1)\), or dividing \(r\) if \(g > r(h - 1)\).

\begin{thm}[{\cite{beauville} for \(h=0\), and \cite[Theorem 1.2]{clvbeauville} for \(h\geq 1\)}]\label{thm:gen}
Suppose that the characteristic is \(0\) or greater than \(r\).
Let \(Y\) be any smooth irreducible projective curve of genus \(h\).  Let \(f \colon X \to Y\) be a general morphism in any component of \(\HH_{r,g}(Y)\).  Let \(V\) be a general vector bundle of any degree and rank on \(X\).
\begin{enumerate}
    \item If \(h=0\), then \(f_*V\) is balanced. 
    \item If \(h=1\), then \(f_*V\) is semistable.
    \item If \(h\geq 2\), then \(f_*V\) is stable.
\end{enumerate}
\end{thm}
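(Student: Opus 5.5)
Since (semi)stability is open in families (Proposition~\ref{prop:stab-open}), and both the Hurwitz component and the space of bundles on \(X\) are irreducible, it suffices to produce one degeneration of the pair \((f,V)\) whose limit pushforward is (semi)stable. The case \(h=0\) is Beauville's and I would not redo it. For \(h\ge 1\) the plan is to degenerate the cover \(f\colon X\to Y\), keeping \(Y\) fixed, to a cover by a nodal curve. Let \(X_0\) be the union of an étale (or primitive-plus-étale) cover \(X_1\to Y\) of the appropriate degree realizing the chosen component of \(\HH_{r,g}(Y)\), together with rational tails or \(\PP^1\)-bridges mapping to \(Y\). These components are attached at pairs of points in a common fiber and carry the ramification needed to reach genus \(g\). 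The family \(f_*\mathcal{V}\) is then a flat family of bundles on the fixed curve \(Y\), because \(f\) is finite and flat. Its special member is \(f_{0*}V_0\), and stability can be tested directly on the smooth curve \(Y\).

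The first step is the étale case. If \(f\colon X\to Y\) is étale of degree \(r\) and \(V=L\) is a general line bundle, then \(f_*L\) is (semi)stable. Pulling back to the Galois closure, \(f^*f_*L\) is a direct sum of the Galois conjugates of \(L\), namely the \(\sigma^*L\). These have equal degree, so the sum is semistable, and semistability descends along \(f\) since pullback multiplies slopes by \(r\). For stability when \(h\ge 2\), a destabilizing subbundle \(W\) of the same slope pulls back to a sub-sum of conjugates stable under the Galois group. Now \(L\) is general in \(\Pic(X)\), and \(\dim\Pic X=g(X)>h\) when \(h\ge2\). So the conjugates \(\sigma^*L\) are pairwise nonisomorphic, and the determinant of such a sub-sum varies in a family that cannot descend to \(Y\). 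This gives a contradiction, and I would make it precise in the spirit of the Chern-class-naturality trick of Lemma~\ref{lem-blackmagic}. For \(h=1\) only semistability is claimed, and it follows from the same computation. Higher rank \(V\) reduces to this case by degenerating \(V\) to a direct sum of general line bundles of nearly equal degrees, or to an extension of them.

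The second step is to absorb the ramification. Gluing a rational component \(R\to Y\), or degenerating so that two sheets are glued at a point, changes \(f_*V\) by an elementary modification supported at a point of \(Y\). Concretely, \(f_{0*}V_0\) sits between \(f_{1*}V_1\) and a positive modification at the image points, in the directions given by the gluing data. The directions are general, and a general point modification of a stable bundle on \(Y\) remains stable; for semistable bundles on an elliptic curve one tracks slopes exactly. So each such step preserves (semi)stability. I would induct on \(g-r(h-1)\), adding one unit of ramification at a time. The étale base case of each component is the étale cover \(f^\et\) composed with a primitive part handled by the same induction over \(Z\).

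The main obstacle I expect is the genericity in the second step. The modification directions are not arbitrary: they are determined by the gluing and by \(V_0\). Equivalently, I must show that the limit subsheaves cannot systematically align with these directions. I would first try to control this with the adjusted-slope formalism on \(X_0\) and Lemma~\ref{lem:naive}-style bookkeeping. The fallback is a monodromy argument: varying the attachment points over \(Y\) makes the destabilizing candidates move, and their determinants would then give a nonconstant rational map from a rational base to \(\Pic Y\), which is impossible.
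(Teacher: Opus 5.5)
The survey gives no argument for this theorem beyond the citations to \cite{beauville} and \cite{clvbeauville}, so I am judging your outline on its own terms. Keeping $Y$ fixed and degenerating only the source is the right framework, but the step that carries all the content is not established.

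Let $\tilde f\colon\tilde X\to Y$ be the normalization composed with $f_0$, let $\tilde V$ be the pullback of $V_0$, and set $\tilde E=\tilde f_*\tilde V$. Suppose a node glues $p_1,p_2\in\tilde X$ over $q$ via $\phi\colon\tilde V|_{p_1}\to\tilde V|_{p_2}$. Then $f_{0*}V_0=\ker\bigl(\tilde E\to\tilde V|_{p_2}\otimes k_q\bigr)$, where the map is $s\mapsto s(p_2)-\phi(s(p_1))$. The fiber kernel $W_q=\{v: v_{p_2}=\phi(v_{p_1})\}$ contains every summand $\tilde V|_p$ with $p\neq p_1,p_2$. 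So it is a very special point of the Grassmannian, and ``general modifications preserve stability'' says nothing here.

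The modification lowers the slope by exactly $1/r$. What you need is therefore $c_F\colonequals\codim_{F_q}(F_q\cap W_q)\ge \rk F/r$ for every saturated $F\subset\tilde E$ with $\mu(F)\ge\mu(\tilde E)-1/r$. For one fixed $F$ this is attainable. When $\tilde X$ is irreducible, the adjoint map $\tilde f^*F\to\tilde V$ has a single generic rank $e$ on all sheets. Hence $\rk F\le re$ and $\dim\pi_{p_i}(F_q)=e$ for general $q$, which gives $c_F\ge e$ for $\phi$ general relative to $F$. The real difficulty is choosing $q$ and $\phi$ general with respect to the whole, possibly positive-dimensional, family of candidate $F$ at once. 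Your proposal does not address this.

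Neither of your fallbacks closes the gap. The adjusted-slope formalism and Lemma~\ref{lem:naive} concern stability of bundles on the nodal curve $X_0$ itself, not of their pushforwards to $Y$. The monodromy argument needs a rational base, but the node positions move along $Y$ (or along $\tilde X\times_Y\tilde X$), a curve of positive genus. Along it $\det(F\cap f_{0*}V_0)=\det F\otimes\OO_Y(-c_F\,q)$ genuinely varies, so there is no rigidity for maps to $\Pic Y$. Only $\phi\in GL_n$ moves in a rational family.

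Two further steps fail as written.

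\emph{Rational components.} Since $h\ge1$, every map $\PP^1\to Y$ is constant. Rational tails or bridges over a fixed $Y$ would therefore be contracted, $f_0$ would not be finite, and they cannot carry ramification. Every component of $X_0$ must itself be a cover of $Y$. The extra genus can only come from nodes lying over points of $Y$, each arising from two colliding simple branch points.

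\emph{Reduction to line bundles.} The reduction from rank $n$ to line bundles breaks down, because $f_*\bigl(\bigoplus_i L_i\bigr)=\bigoplus_i f_*L_i$ is never stable. It is even unstable as soon as the $\deg L_i$ differ, which is forced when $n$ does not divide $\deg V$. You would have to do one of the following:
\begin{enumerate}
\item handle a general rank-$n$ bundle directly in the \'etale case, using the Galois closure $\tau\colon\widehat X\to X$ with $G=\Gal(\widehat X/Y)$ and $H=\Gal(\widehat X/X)$, and showing that $\tau^*V$ is stable and its $G/H$-conjugates are pairwise nonisomorphic; or
\item use non-diagonal gluings at the nodes, so that the limit is not a direct sum.
\end{enumerate}

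Even for line bundles, pairwise nonisomorphism of the $\sigma^*\tau^*L$ does not follow from $g(X)>h$ alone. For each $\sigma\notin H$ you need $g(X)>g\bigl(\widehat X/\langle H,\sigma\rangle\bigr)$. This holds for $h\ge 2$, since $X$ is \'etale of degree at least $2$ over that quotient. Once you have it, transitivity of $G$ on $G/H$ gives stability directly, with no determinant argument.
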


\begin{rem}
   In particular,  Beauville's conjecture holds for the general member of any component of the Hurwitz space $\HH_{r,g}(Y)$. However, Beauville's original conjecture predicting the stability of the general pushforward for {\em{every}} cover remains open.
\end{rem}

\begin{rem}
It may happen that for special $V$, the bundle $f_* V$ is not semistable. For example, we already saw that $f_* \OO_X$ has $\OO_Y$ as a direct summand. When the map $f$ is ramified, $\OO_Y$ destabilizes $f_* \OO_X$. This example raises the following problem.
\end{rem}

\begin{problem}
    For a general $f\colon X \to Y$ describe the locus of semistable bundles $V \in M_X(r,d)$ for which $f_*V$ is not semistable. 
\end{problem}

In this section, we have again concentrated only on the case of curves. One could ask analogues of these questions for higher dimensional varieties. 

\bibliographystyle{plain}

\begin{thebibliography}{ABCH}

\bibitem[ABBLT22]{abblt22}
J. Alper, P. Belmans, D. Bragg, J. Liang, and T. Tajakka, Projectivity of the moduli space of vector bundles on a curve, in {\it Stacks Project Expository Collection (SPEC)}, London Math. Soc. Lecture Note Ser., 480, Cambridge Univ. Press, (2022,) 90--125.

\bibitem[AR17]{AlzatiRe}
A. Alzati and R. Re, Irreducible components of Hilbert Schemes of rational curves with given normal bundle, Algebr. Geom.,  4 no. 1 (2017), 79--103.



\bibitem[AFO16]{AFO}
M. Aprodu, G. Farkas and A. Ortega, Restricted Lazarsfeld-Mukai bundles and canonical curves, in Development of moduli theory--Kyoto 2013, Advanced studies in pure mathematics, vol. 69, Mathematical Society of Japan (2016), 303--322.

\bibitem[ACGH85]{acgh}
E. Arbarello, M. Cornalba, P. A. Griffiths and J. Harris, Geometry of algebraic curves. Vol. I, Grundlehren
der Mathematischen Wissenschaften  267, SpringerVerlag, New York, 1985, xvi+386.


\bibitem[ALY19]{aly}      
 A. Atanasov, E. Larson and D. Yang,  Interpolation for normal bundles of general curves, Mem. Amer. Math. Soc., {\bf 257} no. 1234 (2019), v+105.  

 \bibitem[A57]{Atiyah}
M.F. Atiyah, Vector bundles over an elliptic curve, Proc. London Math. Soc., {\bf 7} (1957), 414--452.


\bibitem[BE84]{ballicoellia}
 E. Ballico and Ph. Ellia, Some more examples of curves in P3 with stable normal bundle, J. Reine Angew. Math. 350 (1984), 87--93.

\bibitem[Ba89]{ballico89}
E. Ballico, A remark on linear series on general k-gonal curves. 
Boll. Un. Mat. Ital. A (7) {\bf 3} no. 2 (1989), 195--197.

\bibitem[BH98]{ballicohein}
 E. Ballico and G. Hein, On the stability of the restriction of $T\PP^n$
to projective curves, Arch. Math. 71 (1998), 80--88.

\bibitem[BR99]{br99}
 E. Ballico and L. Ramella, On the existence of curves in $\PP^n$ with stable normal bundle, Ann. Polon. Math. {\bf 72} no.1
(1999), 33--42.

\bibitem[Bea00]{beauville}
A. Beauville, On the stability of the direct image of a generic vector bundle, preprint (2000).

\bibitem[Bea06]{beauville2}
A. Beauville, Vector bundles on curves and theta functions, Moduli Spaces and Arithmetic Geometry (Kyoto 2004),
Advanced Studies in Pure Mathematics 45 (2006), 145--156.


\bibitem[BBN08]{bbn}
U. Bhosle, L. Brambila-Paz and P. Newstead, On coherent systems of type $(n, d, n+1)$ on Petri curves, Manuscripta Math. 126 (2008), 409--441.

\bibitem[BBN15]{bbn2}
 U. Bhosle, L. Brambila-Paz and P. Newstead, On linear series and a conjecture of D.C. Butler, International Journal of Math. 26 (2015), 1550007, 18 pp.

\bibitem[Br13]{Bridges}
T. Bridges, R. Datta, J. Eddy, M. Newman and J. Yu, Free and very free morphisms into a Fermat hypersurface, Involve {\bf 6} (2013), no. 4, 437--445.

 \bibitem[B17]{Bruns}
G. Bruns, The normal bundle of canonical genus \(8\) curves, arXiv:1703.06213v2.


\bibitem[Cao]{cao}
A. Cao, Balanceness of normal bundles of rational curves in Grassmannian, in progress.



\bibitem[CZ14]{CZ14}
Q. Chen and Y. Zhu, Very free curves on Fano complete intersections, Algebr. Geom., \textbf{1} no. 5 (2014) 558--572.

\bibitem[Che25]{cheng}
R. Cheng, Free curves in Fano hypersurfaces must have high degree, Proc. Amer. Math. Soc. 153 (2025), 2841--2846.



 \bibitem[Cos08]{coskungw}
I. Coskun, Gromov-Witten invariants of jumping curves, Trans. Am. Math. Soc., 360 (2008), 989--1004.


 \bibitem[CH15]{chgokova}
I. Coskun and J. Huizenga, The birational geometry of the moduli spaces of sheaves on the plane, Proceedings of the G\"{o}kova Geometry-Topology Conference 2014, (2015), 114--155.

 \bibitem[CHN24]{chnsurvey}
I. Coskun, J. Huizenga and H. Nuer, The Brill-Noether Theory of the moduli spaces of sheaves on surfaces, in Moduli spaces and vector bundles-new trends, Contemp. Math. 803 (2024), 103--151.

\bibitem[CHS25]{coskunhuizengasmith}
I. Coskun, J. Huizenga and G. Smith, Stability and cohomology of Kernel bundles on projective
space, Michigan Math. J. 75 no. 1 (2025), 173--198.


\bibitem[CJL26]{coskunjovinellylarson}
I. Coskun, E. Jovinelly and E. Larson, Stability of normal bundles of Brill-Noether curves in $\PP^4$, in preparation.


\bibitem[CLV22]{clv}
I. Coskun, E. Larson and I. Vogt, Stability of normal bundles of space curves, Algebra Number Theory 16 no. 4 (2022), 919--953.

\bibitem[CLV24a]{clvbeauville}
I. Coskun, E. Larson and I. Vogt, Generic Beauville’s conjecture, Forum Math. Sigma 12
(2024), Paper No. e51, 7 pp.

\bibitem[CLV24b]{clvtschirnhausen}
I. Coskun, E. Larson and I. Vogt, Stability of Tschirnhausen bundles, Int. Math. Res. Not.
(IMRN) 2024 no. 1, (2024), 597--612.


\bibitem[CLV25]{clvgrassmannian}
I. Coskun, E. Larson and I. Vogt, Normal bundles of rational curves in Grassmannians, to appear in Algebra Number Theory

\bibitem[CLV25b]{clvcanonical}
I. Coskun, E. Larson and I. Vogt, The normal bundle of a general canonical curve of genus at
least 7 is semistable, to appear in J. Eur. Math. Soc. 

\bibitem[CR18]{coskunriedl}
I. Coskun and E. Riedl, Normal bundles of rational curves in projective space, Math. Z.  {\bf 288} (2018), 803--827.

\bibitem[CR19]{coskunriedlci}
I. Coskun and E. Riedl, Normal bundles of rational curves on complete intersections, Communications in Contemporary Math. 21 no. 2 (2019), (23 pages)

\bibitem[CS22]{coskunsmithvf}
I. Coskun and G. Smith, Very free rational curves on Fano varieties, J. Algebra 611 (2022), 246--264.

\bibitem[CS25]{coskunsmithnb}
I. Coskun and G. Smith, Stability of normal bundles of Brill-Noether curves, Math. Ann., 391 no. 4 (2025), 4997--5032.

\bibitem[DP22]{deopurkarpatel}
A. Deopurkar and A. Patel, Vector bundles and finite covers, Forum Math. Sigma 10 (2022), Paper No. e40, 30 pp.


\bibitem[EiL92]{einlazarsfeld}
L. Ein and R. Lazarsfeld, Stability and restrictions of Picard bundles, with an application to the normal bundles of elliptic curves. Complex projective geometry (Trieste, 1989/Bergen, 1989), 149--56, 
London Math. Soc. Lecture Note Ser., 179, Cambridge Univ. Press, Cambridge, 1992. 


\bibitem[EH83]{EH83}
D.~Eisenbud and J. Harris.
\newblock Divisors on general curves and cuspidal rational curves.
\newblock {\em Invent. Math.}, 74 no. 3 (1983), 371--418.

\bibitem[EH87]{Eishar}
D. Eisenbud and J. Harris, Irreducibility and monodromy of some families of linear series, Ann. Sci. \'{E}cole Norm. Sup. (4) 20 no. 1 (1987), 65--87.


\bibitem[EV81]{eisenbudvandeven}
D. Eisenbud and A. Van de Ven, On the normal bundles of smooth rational space curves, Math. Ann., 256 (1981), 453--463.

\bibitem[EV82]{eisenbudvandeven82}
D. Eisenbud and A. Van de Ven, On the variety of smooth rational space curves with given degree and normal bundle, Invent. Math., 67 (1982), 89--100.


\bibitem[E83]{ellia}
Ph. Ellia, Exemples de courbes de $\PP^3$ a fibre normal semi-stable, stable, Math. Ann. 264 (1983), 389–396.

\bibitem[ElH84]{ellingsrudhirschowitz}
G. Ellingsrud and A. Hirschowitz, Sur le fibr\'{e} normal des courbes gauches, C. R. Acad. Sci. Paris S\'{e}r. I Math. 299 no. 7 (1984), 245--248.

\bibitem[ElL81]{ellingsrudlaksov}
G. Ellingsrud, D. Laksov, The normal bundle of elliptic space curves of degree five, in 18th Scand. Congress of Math. Proc. 1980, ed. E. Balslev, Boston 1981, 258--287.

\bibitem[FL25]{farkaslarson}
G. Farkas and E. Larson, The minimal resolution property for points on general curves, Ann. Sci. \'{E}c. Norm. Sup\'{e}r., 58 no. 2 (2025), 433-462.

\bibitem[Fo25a]{fontana}
H. Fontana, The Harder-Narasimhan filtration of a trigonal canonical curve, preprint, arXiv:2505.14907



\bibitem[Fo25b]{fontana2}
H. Fontana, Tetragonal canonical curves, preprint, arXiv:2512.16191 

\bibitem[GS80]{ghionesacchiero}
F. Ghione and G. Sacchiero, Normal bundles of rational curves in $\PP^3$, Manuscripta Math., 33 (1980), 111--128.

\bibitem[Gi82]{gieseker}
D. Gieseker, Stable curves and special divisors: Petri’s conjecture, Invent. Math. 66 no. 2 (1982), 251--275.

\bibitem[GH80]{griffithsharris}
P. Griffiths and J. Harris,  On the variety of special linear systems on a general algebraic curve, Duke Math.
J., 47 no. 1 (1980), 233--272.

\bibitem[Ha10]{hartshorne}
R. Hartshorne, Deformation Theory, Graduate Texts in Mathematics 257, Springer, New York, N.Y., 2010. 

\bibitem[HH85]{hartshornehirschowitz}
R. Hartshorne and A. Hirschowitz, Smoothing algebraic space curves, Algebraic geometry, Sitges (Barcelona), 1983, Lecture Notes in Math., 1124, Springer, Berlin (1985), 98--131.

\bibitem[Ho10]{hoffman}
N. Hoffmann, Moduli stacks of vector bundles on curves and the King-Schofield rationality proof in Cohomological and geometric approaches to rationality problems, 133--148, Progr. Math., 282, Birkh\"{a}user
Boston, Boston, MA, 2010.

\bibitem[HL10]{huybrechtslehn}
D. Huybrechts and M. Lehn, The geometry of moduli spaces of sheaves Cambridge University Press, Cambridge, 2010.

\bibitem[JP24]{jensenpayne}
D. Jensen and S. Payne, Tropical independence I: Shapes of divisors and a proof of the Gieseker–Petri theorem, Algebra Number Theory, 8 no. 9 (2014), 2043--2066.





\bibitem[K96]{Kol96}
J. Koll\'{a}r, Rational curves on algebraic varieties, Springer, 1996.

\bibitem[LL24]{landesmanlitt}
A. Landesman and D. Litt, Geometric local systems on very general curves and isomonodromy, J. Amer. Math. Soc. 37 (2024), 683--729.


\bibitem[L24]{la24}
E. Larson, On the cohomology of $N_C(-2)$ in positive characteristic,  Commun. Contemp. Math.,  26 (2024), no. 10, Paper No. 2350067, 11 pp.

\bibitem[L21]{hannah}
H. Larson, Normal bundles of lines on hypersurfaces,  Michigan Math. J., 70
no. 1 (2021),  115--131.

\bibitem[LV23]{interpolation}
E. Larson and I. Vogt, Interpolation for Brill-Noether curves, Forum Math. Pi, vol. 11 (2023), e25.

\bibitem[LP97]{lepotierbook}
J. Le Potier, Lectures on vector bundles, translated by A. Maciocia, Cambridge Studies in Advanced
Mathematics, 54, Cambridge Univ. Press, Cambridge, 1997.

\bibitem[Ma19]{Ma19}
S. Mandal,  On the loci of morphisms from $\PP^1$ to 
$G(r, n)$ with fixed splitting type of the
restricted universal sub-bundle or quotient bundle, J. Algebra,  585  (2021), 759--783.

\bibitem[Mi25a]{mioranci}
L. Mioranci, Normal bundles of rational normal curves on hypersurfaces, 
Michigan Math. J., 75 no. 3 (2025), 639--671.


\bibitem[Mi25b]{mioranci2}
L. Mioranci, Restricted tangent bundle of rational curves on projective hypersurfaces, preprint, arXiv:2507.13927

\bibitem[Mis08]{mistretta}
 E. Mistretta, Stability of line bundle transforms on curves with respect to low codimensional subspaces, J. London Math. Soc. 78 (2008), 171--182.

\bibitem[Mu92]{mukai}
 S. Mukai, Fano 3-folds, in Complex Projective Geometry, London Math. Soc. Lecture Notes Ser. 179, Cambridge University Press (1992), 255--263.

\bibitem[Mu96]{mukai2}
S. Mukai, Curves and K3 surfaces of genus eleven, in Moduli of vector bundles, Lecture Notes in Pure and Appl. Math. 179, Dekker (1996), 189--197.

\bibitem[N83]{newstead}
P. E. Newstead, A space curve whose normal bundle is stable, J. London Math. Soc. (2) 28 (1983), 428--434.

\bibitem[O14]{osserman}
B. Osserman, A simple characteristic-free proof of the Brill–Noether theorem, Bull. Brazilian Math. Soc, 45 no. 4 (2014), 807--818.

\bibitem[Ram90]{ramella}
L. Ramella, La stratification du sch \'ema de Hilbert des courbes rationnelles de $\PP^n$ par le fibr\'e tangent restreint, C. R. Acad. Sci. Paris S\'er. I Math., 311 no. 3 (1990), 181--184.

\bibitem[R07]{ran}
Z. Ran, Normal bundles of rational curves in projective spaces, Asian J. Math., 11 no. 4 (2007), 567--608. 





\bibitem[R21]{ran21}
Z. Ran,  Low-degree rational curves on hypersurfaces in projective spaces and their fan degenerations, J. Pure Appl. Algebra 225 (2021), no. 2, Paper No. 106492, 20 pp.


\bibitem[R23a]{ran23}
Z. Ran, Balanced curves and minimal rational connectedness on Fano hypersurfaces,
Int. Math. Res. Not. IMRN 2023, no. 6, 4555--4600.

\bibitem[R23b]{ran23b}
Z. Ran, Curves with semistable normal bundle on Fano hypersurfaces, preprint (2023), arXiv:2211.12661.



\bibitem[R24]{ran24}
Z. Ran, Interpolation of curves on Fano hypersurfaces,
Commun. Contemp. Math. 26 (2024), no. 1, Paper No. 2350002, 41 pp.

\bibitem[R25]{ran25}
Z. Ran, Polarized interpolation and normal postulation for curves on Fano hypersurfaces, Ukrainian Math. J. 77 no.1  (2025), 105--115.



\bibitem[S80]{sacchiero80}
G. Sacchiero, Fibrati normali di curvi razionali dello spazio proiettivo, Ann. Univ. Ferrara Sez. VII, 26 (1980), 33--40.

\bibitem[S82]{sacchiero82}
G. Sacchiero, On the varieties parameterizing rational space curves with fixed normal bundle, Manuscripta Math., 37 (1982), 217--228.  





\bibitem[S83]{sacchiero}
G. Sacchiero, Exemple de courbes de $\PP^3$ de fibr\'{e} normal stable, Comm. Algebra, 11 no. 18 (1983), 2115--2121.

\bibitem[Sha24]{shang}
R. Shang, Slope semistability of Veronese normal bundles, preprint, arXiv:2411.16664

\bibitem[She12a]{She12}
M. Shen, Rational curves on Fermat hypersurfaces, C. R. Math. Acad. Sci. Paris {\bf 350} (2012), no.~15-16, 781--784.

\bibitem[She12b]{Shen2}
M. Shen, On the normal bundles of rational curves on Fano 3-folds, Asian J. Math. {\bf 16} no. 2 (2012), 237--270.

\bibitem[ST18]{ST18}
J. Starr and Z. Tian, Separable rational connectedness and weak approximation in positive characteristic, preprint, arXiv:1907.07041.

\bibitem[STZ18]{STZ18}
J. Starr, Z. Tian, and R. Zong, Weak approximation for Fano complete intersections in positive characteristic, preprint, arXiv:1811.02466.

\bibitem[Ti15]{Tia15}
Z. Tian, Separable rational connectedness and stability, in Rational points, rational curves, and entire holomorphic curves on algebraic varieties, Contemp. Math., \textbf{654} (2015), 155--160.

\bibitem[V18]{vogt}
I. Vogt, Interpolation for Brill-Noether space curves, Manuscripta Math. {\bf 156} (2018), 137--147.


\bibitem[Zh11]{Zhu}
Y. Zhu, Fano hypersurfaces in positive characteristic, preprint, arXiv:1111.2964.

\end{thebibliography}

\end{document}